\newcommand{\s}{\mathbb{S}}
\newcommand{\CC}{\mathbb{C}}
\newcommand{\NN}{\mathbb{N}}
\newcommand{\PP}{\mathbb{P}}
\newcommand{\GG}{\mathbb{G}}
\newcommand{\TT}{\mathbb{T}}
\newcommand{\Aa}{\mathcal{A}}
\newcommand{\Cc}{\mathcal{C}}
\newcommand{\Ee}{\mathcal{E}}
\newcommand{\Gg}{\mathcal{G}}
\newcommand{\cg}{\mathfrak{g}}
\newcommand{\ch}{\mathfrak{h}}
\newcommand{\Dd}{\mathcal{D}}
\newcommand{\Ff}{\mathcal{F}}
\newcommand{\Hh}{\mathcal{H}}
\newcommand{\Kk}{\mathcal{K}}
\newcommand{\Nn}{\mathcal{N}}
\newcommand{\Tt}{\mathcal{T}}
\newcommand{\Pp}{\mathcal{P}}
\newcommand{\Oo}{\mathcal{O}}
\newcommand{\Uu}{\mathcal{U}}
\newcommand{\Qq}{\mathcal{Q}}
\newcommand{\Xx}{\mathcal{X}}
\newcommand{\R}{\mathcal{R}}
\newcommand{\V}{\mathcal{V}}
\newcommand{\GGG}{\mathscr{G}}
\newcommand{\ZZ}{\mathbb{Z}}
\newcommand{\LL}{\mathscr{L}}
\newcommand{\TTt}{\sf{T}}
\newcommand{\E}{\sf{E}}
\newcommand{\F}{\sf{F}}
\newcommand{\Hhh}{\sf{H}}
\newcommand{\spi}{\sf{\Psi}}
\newcommand{\SL}{\mathfrak{sl}}
\newcommand{\kk}{\underline{k}}
\newcommand{\Ll}{\underline{l}}
\newcommand{\bo}{\boldsymbol{1}}
\newcommand{\tdim}{\mathrm{dim}}
\newcommand{\tcodim}{\mathrm{codim}}
\newcommand{\Hom}{\mathrm{Hom}}
\newcommand{\End}{\mathrm{End}}
\newcommand{\Ext}{\mathrm{Ext}}
\newcommand{\Sym}{\mathrm{Sym}}
\newcommand{\tdet}{\mathrm{det}}
\newcommand{\Rm}{\mathrm}
\newcommand{\GLL}{\mathrm{GL}}
\newcommand{\SLL}{\mathrm{SL}}
\newtheorem{theorem}{Theorem}[section]
\newtheorem{lemma}[theorem]{Lemma}
\newtheorem{proposition}[theorem]{Proposition}
\newtheorem{corollary}[theorem]{Corollary}
\theoremstyle{definition}
\newtheorem{definition}[theorem]{Definition}
\newtheorem{example}[theorem]{Example}
\theoremstyle{remark}
\newtheorem{remark}[theorem]{Remark}
\newtheorem{conjecture}[theorem]{Conjecture}
\numberwithin{equation}{section}
\title[Shifted $\MakeLowercase{q}=0$ affine algebra]{A categorical action of the shifted $\MakeLowercase{q}=0$ affine algebra}
\begin{document}

\emergencystretch 3em

\address{National Center for Theoretical Sciences} \email{yhhsu@ncts.ntu.edu.tw}

\author[You-Hung Hsu]{You-Hung Hsu}

\keywords{Derived category,  Grassmannians, categorification}

\makeatletter
\@namedef{subjclassname@2020}{%
	\textup{2020} Mathematics Subject Classification}
\makeatother

\subjclass[2020]{Primary 14M15, 18G80, 18N25 : Secondary 20C08, 20G42}

\maketitle

\begin{abstract}
We introduce a new algebra called the shifted $q = 0$ affine algebra, which arises naturally from the study of coherent sheaves on Grassmannians and $n$-step partial flag varieties via a natural correspondence.  It has similar presentation as the shifted quantum affine algebra defined by Finkelberg-Tsymbaliuk \cite{FT}. We then give a definition of its categorical action and prove that there is a categorical action on the bounded derived categories of coherent sheaves on $n$-step partial flag varieties. Finally, as an application, we
use it to construct a categorical action of the $q = 0$ affine Hecke algebra on the bounded derived category of coherent sheaves on the full flag variety.
\end{abstract}

\tableofcontents

\section{Introduction} \label{section1}

The (bounded) derived category of coherent sheaves on an algebraic variety plays a central role in modern algebraic geometry and related areas. The purpose of this article is to study categorical actions of a certain algebra (which will be called the shifted $q=0$ affine algebra) on the derived categories of coherent sheaves on Grassmannians and $n$-step partial flag varieties.

\subsection{Categorical $\SL_{2}$ action}
Roughly speaking, a categorical action of a Kac-Moody Lie algebra $\cg$ consists of a collection of functors and categories that recover actions of Chevalley generators at the level of Grothendieck groups. We consider the case $\cg=\SL_{2}(\CC)$. It has the standard basis 
\begin{equation*}
e=\begin{pmatrix}
0 & 1 \\
0 & 0 
\end{pmatrix}, \ 
f=\begin{pmatrix}
0 & 0 \\
1 & 0 
\end{pmatrix}, \ 
h=\begin{pmatrix}
1 & 0 \\
0 & -1 
\end{pmatrix}
\end{equation*} subject to the commutator relations
$[h,e]=2e, \ [h,f]=-2f, \ [e,f]=h$.

A finite-dimensional representation $V$ of $\SL_{2}(\CC)$ consists of a direct sum decomposition $V=\oplus_{\lambda}V_{\lambda}$ into weight spaces and linear maps $e:V_{\lambda}\rightarrow V_{\lambda+2}$ and $f:V_{\lambda}\rightarrow V_{\lambda-2}$, satisfying the relation $ef-fe|_{V_{\lambda}}=\lambda id_{V_{\lambda}}$. Such data can be depicted by the following diagram
\begin{equation} \label{sl2 pic}
\xymatrix{ 
	.... \ar@/^/[r]^{e} 
	& V_{\lambda-2}  \ar@/^/[l]^{f}  \ar@/^/[r]^{e}   
	& V_{\lambda}   \ar@/^/[r]^{e}   \ar@/^/[l]^{f}
	& V_{\lambda+2}   \ar@/^/[l]^{f}  \ar@/^/[r]^{e}  
	&....  \ar@/^/[l]^{f}  }
\end{equation} 

This characterization leads to the following notion of $\SL_{2}(\CC)$ acting on a category

\begin{definition} [\cite{Kam}] \label{def}
A naive categorical $\SL_{2}$ action consists of a sequence of additive categories $\Cc(\lambda)$ together with additive functors ${\E}:\Cc(\lambda) \rightarrow \Cc(\lambda+2)$ and ${\F}:\Cc(\lambda) \rightarrow \Cc(\lambda-2)$ for each $\lambda \in \ZZ$ such that there exist isomorphism of functors  \footnote{Note that we do not specify the data of these isomorphisms between functors in this definition.}  
	\begin{align}
	\begin{split} \label{absiso}
	&{\E\F}|_{\Cc(\lambda)} \cong {\F\E}|_{\Cc(\lambda)} \bigoplus Id_{\Cc(\lambda)}^{\oplus \lambda} \ \text{if} \ \lambda \geq 0, \\
	&{\F\E}|_{\Cc(\lambda)} \cong {\E\F}|_{\Cc(\lambda)} \bigoplus Id_{\Cc(\lambda)}^{\oplus -\lambda}  \ \text{if} \ \lambda \leq 0,
	\end{split}
	\end{align} where $Id_{\Cc(\lambda)}$ is the identity functor for $\Cc(\lambda)$.
\end{definition}

The diagram (\ref{sl2 pic}) for a categorical representation of $\SL_{2}$ becomes:
\begin{equation*}
\xymatrix{ 
	.... \ar@/^/[r]^{\E} 
	& \Cc(\lambda-2)  \ar@/^/[l]^{\F}  \ar@/^/[r]^{\E}   
	&\Cc(\lambda)   \ar@/^/[r]^{\E}  \ar@/^/[l]^{\F}
	& \Cc(\lambda+2)  \ar@/^/[l]^{\F}  \ar@/^/[r]^{\E}  
	&....  \ar@/^/[l]^{\F}  }
\end{equation*}

The work of Beilinson-Lusztig-MacPherson \cite{BLM} provides a geometric model of a categorical $\SL_2$ action, where the  weight categories are given by the derived categories $\Cc(\lambda)=\Dd^bCon(\GG(k,N))$ of constructible sheaves on Grassmannians with $\lambda=N-2k$. The functor ${\E}:\Dd^bCon(\GG(k,N))\rightarrow \Dd^bCon(\GG(k-1,N))$ is given by pull-push along the following correspondence
\begin{equation} \label{diagram9}  
\xymatrix{ 
	&&Fl(k-1,k)=\{0 \overset{k-1}{\subset} V' \overset{1}{\subset} V \overset{N-k}{\subset} \CC^N \} 
	\ar[ld]_{p_1} \ar[rd]^{p_2}   \\
	& \GG(k,N)  && \GG(k-1,N)
}
\end{equation} where $Fl(k-1,k)$ is the $3$-step partial flag variety and the numbers above the inclusions indicate the increase in dimensions. The functor ${\F}$ is given by the opposite pull-push.

We have the following theorem, due to  Beilinson-Lusztig-MacPherson and Chuang-Rouquier 
\begin{theorem}[\cite{BLM}, \cite{CR}] \label{theorem} 
	The categories $\Cc(\lambda)$ and functors $\E$, $\F$, defined above gives a naive categorical $\SL_{2}$ action. More precisely, the functors $\E$, $\F$ satisfy the relation (\ref{absiso}) in Definition \ref{def}.
\end{theorem}

The reason that we call it a naive categorical action is because we do not specify the natural isomorphisms in (\ref{absiso}). A categorical $\SL_{2}$ action is a naive one where these natural isomorphisms are specified via certain adjunctions; see Definition 1.5 in \cite{Kam} for details.

Understanding natural transformations between functors for relations in a categorical action is an important problem in higher representation theory, since ideally the isomorphisms between functors (like (\ref{absiso})) should be induced from certain natural adjunction data. In the past decade, there has been much progress on constructing categorical actions of semi-simple (or more generally Kac-Moody) Lie algebras $\cg$ and their $q$-analogues.

One answer to this problem for the $\SL_{2}$-categorification was given by Chuang-Rouquier \cite{CR} and was later generalized to (simply-laced) Kac-Moody algebras $\cg$ by Khovanov-Lauda \cite{KL1} \cite{KL2} \cite{KL3}, and Rouquier \cite{R}. After this, there were several developments and applications following Theorem \ref{theorem}. People have extensively studied the categorical $\SL_{2}$ actions in several flavours, one of them being the notion of geometric categorical $\SL_2$ action, introduced by Cautis-Kamnitzer-Licata in \cite{CKL2} via the language of Fourier-Mukai transformations; see also \cite{CKL1}, \cite{CKL3} for applications.

\subsection{Constructing categorical action via derived categories of coherent sheaves} 

\subsubsection{Main results}

Motivated by the above works, we consider categories of coherent sheaves instead of constructible sheaves. This means that our weight categories are $\Kk(\lambda)=\Dd^bCoh(\GG(k,N))$, the bounded derived categories of coherent sheaves on $\GG(k,N)$.

In the coherent setting, there is a natural line bundle on $Fl(k-1,k)$; namely, denoting $\V,\V'$ to be the tautological bundles on $Fl(k-1,k)$ of rank $k$ and $k-1$, respectively, then we have the tautological line bundle $\V/\V'$ on $Fl(k-1,k)$.  

Thus in contrast to the constructible setting where the functors are given by a pull-push along the correspondence (\ref{diagram9}), in the coherent setting we have a family of functors parameterized by powers of the line bundles $(\V/\V')$. More precisely, we have functors for $r \in \ZZ$
\begin{equation} \label{eq 8} 
{\E}_{r}:=p_{2*}(p_{1}^{*}\otimes(\V/\V')^{r}):\Dd^{b} Coh(\GG(k,N)) \rightarrow \Dd^{b} Coh(\GG(k-1,N))
\end{equation} and similarly for ${\F}_{r}$ in the opposite direction. The main goal of this article is to study the following problem.

\textbf{Problem:} Understand the algebra with (abstract) generators $e_{r}, \ f_{s}$  acting on $\bigoplus_{k} \Dd^{b} Coh(\GG(k,N))$ induced by the functors ${\E}_{r}$, ${\F}_{s}$ defined in (\ref{eq 8}), where $r, \ s \in \ZZ$. In particular, there are several natural questions that we can ask: 
\begin{enumerate}
     \item What are the categorical commutator relations between ${\E}_{r}$ and ${\F}_{s}$? \\
    \item What are the relations that we obtain in the algebra after decategorifying (passing to the Grothendieck group)? \\
    \item  Assuming that we obtain the algebra in (2), can we give a definition of its categorical action as for $\SL_2$ in Definition \ref{def}?
\end{enumerate}
   
The main results of this article answer the above questions. Before we state the main result, let us mention some remarks.
   
First, this algebra highly resembles to the loop algebra $L\SL_{2}:=\SL_{2} \otimes \CC[t,t^{-1}]$ (however, they are not the same) where the generators $e_{r}$, $f_{s}$ are analogous to $e\otimes t^r$, $f\otimes t^{s}$, $r,s \in \ZZ$.

Second, such a construction of decategorified actions goes back to the work by Nakajima \cite{N2}, where twists by line bundles for the loop structure appears when moving from cohomology (or Borel-Moore homology) to K-theory.

Third, to the second question, after decategorifying, we obtain an algebra with a presentation that is similar to the shifted quantum affine algebra defined by Finkelberg-Tsymbaliuk \cite{FT}. We call the resulting algebra \textit{the shifted $q=0$ affine algebra}, denoted by $\dot{\Uu}_{0,N}(L\SL_2)$. The $q=0$ refers to the fact that some of our relations can be obtained by taking $q=0$ directly from relations in the shifted quantum affine algebra, and its relation to the $q=0$ affine Hecke algebra (discussed in Section \ref{section 6}).

Finally, answering these questions also leads us to construct a categorical action of $\dot{\Uu}_{0,N}(L\SL_2)$ on $\bigoplus_{k}\Dd^bCoh(\GG(k,N))$. More generally, we consider the $\SL_{n}$ case, where the Grassmannian is replaced by the $n$-step partial flag varieties $Fl_{\kk}(\CC^N)$ (see (\ref{nfl}) for its definition). We summarize the main results of this article in the following theorem.

\begin{theorem} \label{Theorem'''}
\begin{enumerate}
    \item There are non-split exact triangles relating the functors ${\E}_{i,r}{\F}_{i,s}$ and ${\F}_{i,s}{\E}_{i,r}$ for certain $r,s$ (Proposition \ref{proposition 5}).
    \item The resulting algebra is a new algebra, the shifted $q=0$ affine algebra $\dot{\Uu}_{0,N}(L\SL_n)$ with generators and relations given in Definition \ref{definition 1}.
    \item We give a definition of categorical $\dot{\Uu}_{0,N}(L\SL_n)$ actions (Definition \ref{definition 2}). We prove that there is a categorical action of $\dot{\Uu}_{0,N}(L\SL_n)$ on $\bigoplus_{\kk}\Dd^bCoh(Fl_{\kk}(\CC^N))$ (Theorem \ref{theorem 1}).
\end{enumerate}
\end{theorem}

\subsubsection{The difference from the constructible setting and other new features}

In this subsection, we address details about our results, in particular the categorical commutator relations from Theorem \ref{Theorem'''} (1) arising from the geometric setting. We emphasise the contrast with the constructible setting (i.e. relation (\ref{absiso}) in Definition \ref{def}). To compare the two compositions of functors ${\E}_{r}{\F}_{s}$ and ${\F}_{s}{\E}_{r}$ with ${\E}_{r}$, ${\F}_{s}$ defined in (\ref{eq 8}), we use the language of FM (Fourier-Mukai) transformations to translate the comparison between compositions of functors to the comparison between convolutions of FM kernels.

We denote $\Ee_{r}\bo_{(k,N-k)}$ to be the FM kernel for ${\E}_{r}\bo_{(k,N-k)}:\Dd^{b} Coh(\GG(k,N)) \rightarrow \Dd^{b} Coh(\GG(k-1,N))$ and similarly $\Ff_{s}\bo_{(k,N-k)}$ for ${\F}_{s}\bo_{(k,N-k)}$ where $r, \ s \in \ZZ$. We then obtain the following isomorphisms
\begin{equation} \label{eq x}
(\Ee_{r} \ast \Ff_{s})\bo_{(k,N-k)} \cong (\Ff_{s}\ast \Ee_{r})\bo_{(k,N-k)} \ \text{for} \ 1-k \leq r+s \leq N-k-1
\end{equation} and the following exact triangles in $\Dd^bCoh(\GG(k,N) \times \GG(k,N))$
\begin{align} 
  &({\Ff}_{s} \ast {\Ee}_{r})\bo_{(k,N-k)} \rightarrow ({\Ee}_{r}\ast {\Ff}_{s})\bo_{(k,N-k)} \rightarrow {\Psi}^{+}\bo_{(k,N-k)},  \ \text{if} \  r+s=N-k  \label{eq y} \\
  &({\Ff}_{s} \ast {\Ee}_{r})\bo_{(k,N-k)} \rightarrow ({\Ee}_{r}\ast {\Ff}_{s})\bo_{(k,N-k)} \rightarrow ({\Psi}^{+} \ast \Hh_{1})\bo_{(k,N-k)},  \ \text{if} \  r+s=N-k+1 \label{eq z}
\end{align} where we denote $\ast$ to be the convolution of FM kernels, and  ${\Psi}^{+}\bo_{(k,N-k)}$, $\Hh_{1}\bo_{(k,N-k)}$ are certain FM kernels (see Definition \ref{definition 3} for details).

We mention some key properties of the above results. First, note that the commutator between ${\E}_{r}$ and ${\F}_{s}$ depends only on the integer $r+s$. Second, (\ref{eq x}),  (\ref{eq y}), and (\ref{eq z}) are reflected from the coherent sheaf cohomology $H^{*}(\PP^{N-1}, \Oo_{\PP^{N-1}}(-r-s-k))$. For example, (\ref{eq x}) corresponds to the vanishing  $H^{*}(\PP^{N-1}, \Oo_{\PP^{N-1}}(-r-s-k))=0$ for $-N+1 \leq -r-s-k \leq -1$ and implies that $[e_{r},f_{s}]1_{(k,N-k)}=0$ for $1-k \leq r+s \leq N-k-1$. Third, the exact triangles (\ref{eq y}), (\ref{eq z}) are non-split (see Remark \ref{nonsplit}), which is another very different feature from the corresponding constructible setting, i.e. Theorem \ref{theorem}.

We also obtain similar exact triangles when $r+s=-k, \ -k-1$ (see relation (16) in Definition \ref{definition 2} for details). For the case where $r+s \geq N-k+2$ and $r+s \leq -k-2$, although we have a description of the FM kernels that relate $({\Ff}_{s} \ast {\Ee}_{r})\bo_{(k,N-k)}$ and $({\Ee}_{r}\ast {\Ff}_{s})\bo_{(k,N-k)}$ as in (\ref{eq y}), (\ref{eq z}), we do not include those relations in either the definition of $\dot{\Uu}_{0,N}(L\SL_2)$ nor its categorical action (for a short discussion, see Subsection \ref{subsection 5.4}).

The main reason is that the presentation of the shifted $q=0$ affine algebra (see Definition \ref{definition 1}) that we choose to work with is motivated by the so-called Levendorskii presentation of the shifted quantum affine algebra (see Definition \ref{definition 6}) introduced by Finkelberg-Tsymbaliuk \cite{FT}. The advantage of using this presentation is its simplicity, i.e. it has a
finite number of generators and relations, and thus can be used to define a categorical action. 

In \cite{FT}, Finkelberg-Tsymbaliuk showed that the Levendorskii presentation is equivalent to the usual loop presentation of the shifted quantum affine algebra (see Theorem \ref{Theorem''}). Similarly, in Appendix \ref{appendix A}, we purpose a definition for the shifted $q=0$ affine algebra via the loop presentation (see Definition \ref{Definition 1}), and we also expect (see Conjecture \ref{conjecture 1}) that the two presentations are equivalent.

Finally, besides the commutator relations between ${\E}_{r}\bo_{(k,N-k)}$ and ${\F}_{s}\bo_{(k,N-k)}$, we also find other interesting hidden higher relations at the level of derived categories.

We define the functors ${\Hhh}_{1}\bo_{(k,N-k)}$ to be the Fourier-Mukai transform with kernel given by $\Hh_{1}\bo_{(k,N-k)}$. Then we study the categorical commutator relation between ${\Hhh}_{1}\bo_{(k,N-k)}$ and ${\E}_{r}\bo_{(k,N-k)}$. They are given by the following (non-split) exact triangles in $\Dd^bCoh(\GG(k,N) \times \GG(k-1,N))$
\begin{equation} \label{eq5}
({\Hh}_{1} \ast {\Ee}_{r})\bo_{(k,N-k)} \rightarrow ({\Ee}_{r}\ast {\Hh}_{1})\bo_{(k,N-k)} \rightarrow ({\Ee}_{r+1}\bigoplus {\Ee}_{r+1}[1])\bo_{(k,N-k)}.
\end{equation} 

The exact triangle (\ref{eq5}) implies that after  decategorifying (at the level of Grothendieck group), the commutator relations between the generators $e_{r}1_{(k,N-k)}$ and $h_{1}1_{(k,N-k)}$ in $\dot{\Uu}_{0,N}(L\SL_2)$ are trivial, i.e. $[h_{1},e_{r}]1_{(k,N-k)}=0$. But they are \textit{non}-trivial in the categorification. We also study the categorical relations between $\Hh_{1}\bo_{(k,N-k)}$ and $\Ff_{s}\bo_{(k,N-k)}$ and obtain similar results. We refer the reader to Theorem \ref{theorem 3} for details.

\subsubsection{The Grothendieck groups}
Even though the presentation of $\dot{\Uu}_{0,N}(L\SL_2)$  (or more generally $\dot{\Uu}_{0,N}(L\SL_n)$) that we use is not the loop presentation, the categorical action we construct on $\bigoplus_{k}\Dd^bCoh(\GG(k,N))$ from (3) in Theorem \ref{Theorem'''} can help us understand the commutator of the loop generators $e_{r}1_{(k,N-k)}$, $f_{s}1_{(k,N-k)}$ on the Grothendieck group $\bigoplus_{k} K(\GG(k,N))$ for all $r,s \in \ZZ$. The loop generators $e_{r}1_{(k,N-k)}$ act on $K(\GG(k,N))$ via decategorifying the functor ${\E}_{r}\bo_{(k,N-k)}$ in (\ref{eq 8}), i.e. 
\begin{equation*}
e_{r}1_{(k,N-k)} \coloneqq p_{2*}(p_{1}^{*} \cdot [(\V/\V')^{r}]):K(\GG(k,N)) \rightarrow K(\GG(k-1,N))
\end{equation*} where we denote $[(\V/\V')^{r}]$ to be the class of the line bundle $(\V/\V')^{r}$ in the Grothendieck group $K(\GG(k,N))$. There is a similar definition of $f_{s}1_{(k,N-k)}$ in the opposite direction. Then we have the following corollary. 

\begin{corollary} (Corollary \ref{corollary cr1})
The commutator relations in the Grothendieck group $K(\GG(k,N))$ for $e_{r}1_{(k,N-k)}$, $f_{s}1_{(k,N-k)}$ with $r,s \in \ZZ$ are given by
\begin{equation*}
[e_{r},f_{s}]1_{(k,N-k)}= \begin{cases}
	\otimes (-1)^{N-k-1}[\det(\CC^N/\V)][\Sym^{r+s-N+k}(\CC^N)] & \text{if} \  r+s \geq N-k \\
	0 & \text{if} \  -k+1 \leq r+s \leq N-k-1 \\
	\otimes (-1)^{k}[\det(\V)^{-1}][\Sym^{-r-s-k}(\CC^N)^{\vee}]& \text{if} \ r+s \leq -k
	\end{cases}.
\end{equation*}
\end{corollary}

\begin{remark}
We also have the above result for the $\SL_{n}$ case, see Corollary \ref{corollary cr2}.
\end{remark}

\subsection{Application to the $q=0$ affine Hecke algebra} \label{1.3}

Affine Hecke algebras and their degenerations play an important role in representation theory and related areas like number theory, knot homology, etc. They have been studied extensively over the past few decades. One of the main problems is to classify the finite dimensional irreducible representations of affine Hecke algebras. 

Starting with Lusztig \cite{Lu1}, people have approached the problem using equivariant K-theory. In \textit{loc. cit.}, Lusztig constructed an action of the affine Hecke algebra on the equivariant K-theory of the full flag variety. The generators of the finite Hecke algebra part act by the so-called \textit{Demazure-Lusztig} operators, while the generators of the translation part act by tensoring of line bundles. Later, Kazhdan-Lusztig \cite{KL0} \cite{KL} generalized this to construct representations of the affine Hecke algebra on the equivariant K-theory of Springer fibres and proved the Deligne-Langlands conjecture.

The representation theory of affine Hecke algebra and its related version is closely connected to the categorical representation of Kac-Moody algebras. Thus people also study the categorification or construct categorical actions of the (affine) Hecke algebras. The famous examples include the categorification of Hecke algebras via \textit{Soergel bimodules} (see \cite{EMTW} for an introduction), and Bezrukavnikov's two realizations of an affine Hecke algebra  \cite{Bez}.

Our second main result of this article is to categorify the above classical result by Lusztig \cite{Lu1} 
by lifting the action from K-theory to the derived category of coherent sheaves. More precisely, we construct a categorical action of the $q=0$ affine Hecke algebra, i.e. the affine Hecke algebra specialized at $q=0$, on the derived category of coherent sheaves on the full flag variety.

Let us explain this more detaily. We fix $G=\SLL_{N}(\CC)$ with $B \subset G$ be the Borel of upper triangular matrices. Then the full flag variety has the following description
\begin{equation} \label{eq'}
	G/B=\{0=V_{0} \subset V_{1} \subset ... \subset V_{N}=\CC^N \ | \ \dim V_{k}=k \ \text{for} \ \text{all} \ k  \},
\end{equation} and similarly the partial flag variety
\begin{equation} \label{eq''}
	G/P_{i}=\{0 \subset V_{1} \subset V_{2} \subset ...V_{i-1} \subset V_{i+1} \subset ... V_{N}=\CC^N \ | \ \dim V_{k} =k \ \text{for} \ k \neq i \}
\end{equation} where $P_{i}$ is a minimal parabolic subgroup for $1 \leq i \leq N-1$. 

For each $1 \leq i \leq N-1$, we have a natural projection $\pi_{i}:G/B \rightarrow G/P_{i}$ which is a $\PP^1$-fibration for all $1 \leq i \leq N-1$. Such $\pi_{i}$ induces natural pullback $\pi^{*}_{i}$ and pushforward $\pi_{i*}$ in K-theory for all $1 \leq i \leq N-1$. 

The \textit{Demazure-Lusztig} operators are certain $q$-analog of the Demazure operators, defined by $\delta_{i}\coloneqq \pi^{*}_{i}\pi_{i*}$, where definition goes back to the fundamental works by Bernstein-Gelfand-Gelfand \cite{BGG} and Demazure \cite{D} on the divided difference operators. 

On the other hand, let $\V_{i}$ be the tautological bundle of rank $i$ on $G/B$ for $0 \leq i \leq N$. Then we have the natural line bundles $\LL_{i}=\V_{i}/\V_{i-1}$ on $G/B$ for $1 \leq i \leq N$. 

The Demazure operators $\delta_{i}$, together with the operators given by tensoring with the line bundles $\LL_{j}$, generate the $q=0$ affine Hecke algebra, denoted by $\Hh_{N}(0)$, and thus give an action of $\Hh_{N}(0)$ on $K(G/B)$. We refer the readers to Definition \ref{definition 5} for the definition of $\Hh_{N}(0)$. The following is our second main result.

\begin{theorem} [Theorem \ref{theorem 4}] \label{Thm}
There is a categorical action of $\Hh_{N}(0)$ on $\Dd^bCoh(G/B)$.
\end{theorem}

One way to prove this theorem is to directly lift the action by replacing the generators with functors given by FM kernels. However, to check the relations for the action still involves eight convolutions of kernels and verify various exact triangles relating them (see Theorem \ref{theorem 4} for details). Instead of proving this directly, we interpret the Demazure operators in terms of elements in the shifted $q=0$ affine algebra and use its categorical action to give a short proof. 

We need to introduce more notation. For each $\kk=(k_{1},...,k_{n}) \vDash N$, the $n$-step partial flag variety is defined by 
\begin{equation} \label{nfl} 
	Fl_{\kk}(\CC^N):=\{V_{\bullet}=(0=V_{0} \subset V_1 \subset ... \subset V_{n}=\CC^N) \ | \ \tdim V_{i}/V_{i-1}=k_{i} \ \text{for} \ \text{all} \ i\}.	
\end{equation} With this notation, the full flag variety $G/B$ and partial flag varieties $G/P_{i}$ in (\ref{eq'}) and (\ref{eq''}) have the following description
\begin{equation*}
    G/B=Fl_{(1,1,...,1)}(\CC^N), \ G/P_{i}=Fl_{(1,1,...,1)+\alpha_{i}}(\CC^N)= Fl_{(1,1,...,1)-\alpha_{i}}(\CC^N)
\end{equation*} where $\alpha_{i}=(0,...,-1,1,...,0)$ is the simple root with $-1$ in the $i$th position and we have the following commutator diagram 
\begin{equation} \label{diagram''}
\xymatrix{ 
    & \delta_{i} \ar@(dl,dr) \\
    K(G/P_{i}=Fl_{{(1,1,...,1)-\alpha_{i}}}(\CC^N))   \ar@/^/[r]^{e_{i,r}}   
	& K(G/B=Fl_{{(1,1,...,1)}}(\CC^N))   \ar@/^/[r]^{e_{i,r}}   \ar@/^/[l]^{f_{i,s}}
	& K(G/P_{i}=Fl_{{(1,1,...,1)+\alpha_{i}}}(\CC^N))   \ar@/^/[l]^{f_{i,s}}
	}
\end{equation}

Using (\ref{diagram''}), the Demazure operators $\delta_{i}$ can be written as elements in $\dot{\Uu}_{0,N}(L\SL_N)$. As a consequence, all the categorical relations we have to check follow from relations in the categorical action of  $\dot{\Uu}_{0,N}(L\SL_N)$, which drastically reduces  calculations. 

Another interesting consequence of this result is we can generalize the interpretation of Demazure operators in diagram (\ref{diagram''}) to the $n$-step partial flag varieties $Fl_{\kk}(\CC^N)$, and construct more general Demazure (or idempotent) operators acting on its K-theory/derived category. See subsection \ref{1.4} for details.

Finally, we mention the related works by Arkhipov and Kanstrup. In the series of papers \cite{AK1}, \cite{AK2}, \cite{AK3}, and \cite{AK4}, Arkhipov and Kanstrup defined the notion of \textit{Demazure descent data} on a triangulated category originally as an attempt to understand the higher categorical Beilinson-Bernstein localization which developed by Ben-Zvi and Nalder \cite{BN}. The categorified Demazure operators in our action from Theorem \ref{Thm} provides a Demazure descent data on the triangulated category $\Dd^bCoh(G/B)$.

\subsection{Other applications} \label{1.4}
In this section, we mention some results and other applications of the categorical actions of the shifted $q=0$ affine algebra that we prove in \cite{Hsu1}, \cite{Hsu2}.

\subsubsection{Semiorthogonal decompositions}
In \cite{Hsu1}, we prove that a categorical action of $\dot{\Uu}_{0,N}(L\SL_{2})$ naturally gives rise to a semiorthogonal decomposition for each weight category.

We briefly explain what a categorical action of $\dot{\Uu}_{0,N}(L\SL_{2})$ is. A categorical $\dot{\Uu}_{0,N}(L\SL_2)$ action consists of a 2-category $\Kk$, which is triangulated, $\CC$-linear and idempotent-complete. The objects of $\Kk$ are denoted by $\Kk(k,N-k)$, where $0 \leq k \leq N$ and they are also triangulated categories. The generators $e_{r}1_{(k,N-k)}$, $f_{s}1_{(k,N-k)}$ act by functors ${\E}_{r}\bo_{(k,N-k)}$, ${\F}_{s}\bo_{(k,N-k)}$ that satisy certain categorical relations. (see Definition \ref{definition 2}) 

The main idea comes from the observation that the Kapranov exceptional collection \cite{Kap85} \cite{Kap88} can be obtained under the categorical action of the shifted $q=0$ affine algebra, and an exceptional collection naturally gives rise to a semiorthogonal decomposition. For $\GG(k,N)$, the Kapranov exceptional collection is given by $\{ \s_{\lambda}\V \}$ where $\s$ is the Schur functor and $\lambda=(\lambda_{1},...,\lambda_{k}) \in P(N-k,k)$ which is the set of Young diagram with at most $k$ rows and at most $N-k$ columns. Then those exceptional objects can be written as convolutions of kernels 
\begin{equation*}
    \s_{\lambda}\V \cong \Ff_{\lambda_{1}} \ast ... \ast \Ff_{\lambda_{k}}\bo_{(0,N)}.
\end{equation*}

\begin{theorem} (Theorem 4.3 and 4.7 in \cite{Hsu1}) \label{Theorem SOD}
Given a categorical action of $\dot{\Uu}_{0,N}(L\SL_{2})$ on a triangulated 2-category $\Kk$. Then each functor ${\F}_{\lambda}\bo_{(0,N)}\coloneqq{\F}_{\lambda_{1}}....{\F}_{\lambda_{k}}\bo_{(0,N)}:\Kk(0,N) \rightarrow \Kk(k,N-k)$ is fully faithful for $\lambda:=(\lambda_{1},...,\lambda_{k})\in P(N-k,k)$. Moreover, if we denote $\textnormal{Im}{\F}_{\lambda}\bo_{(0,N)}$ to be the full subcategory of $\Kk(k,N-k)$ generated by the essential images of ${\F}_{\lambda}\bo_{(0,N)}$, then those subcategories give rise to the following semiorthogonal decomposition
\begin{equation} \label{SOD}
	\Kk(k,N-k)=\langle \Aa(k,N-k),\ \textnormal{Im}{\F}_{(N-k,...,N-k)}\bo_{(0,N)}, ..., \ \textnormal{Im}{\F}_{(0,...,0)}\bo_{(0,N)} \rangle
\end{equation} where $\Aa(k,N-k)$ is the right orthogonal complement.
\end{theorem}

\subsubsection{Demazure operators for $Fl_{\kk}(\CC^N)$} In \cite{Hsu2}, we construct two families of idempotent operators acting on $K(Fl_{\kk}(\CC^N))$. Those operators can be viewed as a generalization of the Demazure operators $\delta_{i}$ that introduced in Subsection \ref{1.3} for the full flag variety $G/B$. The main idea comes from an interpretation of Demazure operators in terms of the elements in the shifted $q=0$ affine algebra used in this article to prove Theorem \ref{theorem 4}.

We generalize the interpretation of $\delta_{i}$ in diagram (\ref{diagram''}) to $Fl_{\kk}(\CC^N)$, and obtain certain desired operators $\delta'_{i}$, $\delta''_{i}$ for $K(Fl_{\kk}(\CC^N))$. Moreover, they are idempotent operators, i.e. $(\delta'_{i})^{2}=\delta'_{i}$, $(\delta''_{i})^{2}=\delta''_{i}$.

We denote  $H_{n}'(0)$ and $H_{n}''(0)$ to be the algebra generated by $\{\delta'_{i}\}$ and $\{\delta''_{i}\}$, respectively (see Definitions 3.7, 3.8 in \cite{Hsu2} for details). These two algebras are variants of the $q=0$ Hecke algebra where the generators $\{\delta'_{i}\}$ and $\{\delta''_{i}\}$ do not satisfy the braid relations.

Since the definition of categorical action of $\dot{\Uu}_{0,N}(L\SL_n)$ is abstractly defined, our result can be stated abstractly as follows.

\begin{theorem} (Theorem 5.3 in \cite{Hsu2}) 
Given a categorical action of $\dot{\Uu}_{0,N}(L\SL_n)$ on $\Kk$, there exist categorical actions of two variants of the $q=0$ Hecke algebra, denoted by $H_{n}'(0)$ and $H_{n}''(0)$, on $\Kk(\kk)$ where the generators $\delta'_{i}$ and $\delta''_{i}$ act by the functors ${\E}_{i,0}{\F}_{i,k_{i+1}} ({\spi}^{+}_{i})^{-1} \bo_{\kk}$ and ${\F}_{i,0}{\E}_{i,-k_{i}} ({\spi}^{-}_{i})^{-1}\bo_{\kk}$, respectively.
\end{theorem}

Since we prove that there is a categorical action of $\dot{\Uu}_{0,N}(L\SL_n)$ on $\bigoplus_{\kk}\Dd^bCoh(Fl_{\kk}(\CC^N))$, as a consequence we have the following corollary.

\begin{corollary}(Corollary 5.4 and 5.5 in \cite{Hsu2})
There exist categorical actions of $H_{n}'(0)$ and $H_{n}''(0)$ on $\Dd^bCoh(Fl_{\kk}(\CC^N))$, which induce actions on $K(Fl_{\kk}(\CC^N))$.
\end{corollary}

\subsection{Some related works and further directions}
We address the relations to other works and point out some possible interesting further directions that would like to study in the future. 

\subsubsection{Examples beyond the finite Grassmannians}
Since the main examples in this article are the usual Grassmannians, it would be interesting to construct categorical action of the shifted $q=0$ affine algebra on other examples. One possible example is a generalization to Grassmannians (more generally, Quot schemes) of coherent sheaves of homological dimension $\leq 1$. More precisely, let $X$ be a smooth projective variety and $\GGG$ be a coherent sheaf on $X$ of homological dimension $\leq 1$, i.e. such that there is a locally free resolution $\Ee^{-1} \rightarrow \Ee^{0} \rightarrow \GGG$. Then we consider the Grassmannians $\GG(\GGG, d)$ of rank $d$ locally free quotients of $\GGG$ and its derived category $\Dd^bCoh(\GG(\GGG,d))$.

In recent works, Jiang-Leung \cite{JL} obtain a semiorthogonal decomposition of $\Dd^bCoh(\GG(\GGG,d))$ for $d=1$ which generalizes the projective bundle formula by Orlov \cite{O}. Later Jiang \cite{J} purposed a conjecture about the semiorthogonal decomposition for general $d$ which was recently proved by Toda \cite{Y}.

The main reason we mention these results is because they provide examples with \textit{non}-trivial orthogonal complements which are potential candidates for $\Aa(k,N-k)$ in formula (\ref{SOD}) from Theorem \ref{Theorem SOD} (in contrast with the Kapranov exceptional collection, which is full for the usual Grassmannians). We wish to construct a categorical action on those examples so that we can obtain a representation-theoretic meaning of the subcategory $\bigoplus_{k}\Aa(k,N-k)$.

\subsubsection{Shifted quantum affine algebra}
In \cite{N1}, Nakajima constructed actions of quantum loop algebras on the equivariant K-theory of quiver varieties. Typical examples of Nakajima quiver varieties are the cotangent bundle of (type $A$) partial flag varieties $T^*Fl_{\kk}(\CC^N)$, and we have an action of $\Uu_{q}(L\SL_{n})$ on $\bigoplus_{\kk} K^{\CC^*}(T^*Fl_{\kk}(\CC^N))$. Since there are isomorphsims $K(T^*Fl_{\kk}(\CC^N)) \xrightarrow{\simeq} K(Fl_{\kk}(\CC^N))$, it would be interesting to see the relationship between the two actions.

On the other hand, Finkelberg-Tsymbaliuk \cite{FT} construct actions of the (truncated) shifted quantum affine algebras on the (localized) equivariant K-theory of affine Grassmannians and parabolic Laumon spaces. The variable $q$ in the shifted quantum affine algebras comes from a $\CC^*$-action. It would be interesting to interpret our algebra as a certain $q \rightarrow 0$ limit for the shifted quantum affine algebra from the $\CC^*$ action on these spaces.

\subsubsection{Other categorical relations and 2-representation}
As mentioned before, the presentation we use to define the categorical action has finite number of generators and relations. However, the loop realization is much more canonical and it is still natural to study other categorical relations that we do not address in this article, in particular, relations between ${\E}_{r}{\F}_{s}\bo_{(k,N-k)}$ and ${\F}_{s}{\E}_{r}\bo_{(k,N-k)}$ when $r+s \geq N-k+2$ and $r+s \leq -k-2$.

The final thing we mention is about 2-representations. The categorification of quantum groups have been studied extensively, e.g., \cite{CL}, \cite{KL1}, \cite{KL2}, \cite{KL3}, and \cite{R}. Most of those results lead to the construction of the so-called KLR (or quiver Hecke) algebras that act as natural transformations on the generating 1-morphisms ${\E}_{i}, \ {\F}_{j}$ and their compositions. Thus it is natural to study the higher relations, i.e.; the natural transformations between the functors in our categorical action. For instance, the exact triangles (\ref{eq y}), (\ref{eq z}) should be induced from certain natural transformations.

\subsection{Organization}

In Section \ref{section2}, we define the shifted $q=0$ affine algebras. We also mention the definition of shifted quantum affine algebra that defined by Finkelberg-Tsymbaliuk \cite{FT}.

In Section \ref{section3}, we give a definition for the categorical action of the shifted $q=0$ affine algebras.

In Section \ref{section 4}, we recall some background of the Fourier-Mukai transformations, which would be used in the next few sections in order to prove the categorical action.

In Section  \ref{section 5}, we prove the main theorem of this article, that is, there is a categorical action of shifted $q=0$ affine algebra on the bounded derived categories of coherent sheaves of Grassmannians and n-step partial flag varieties (Theorem \ref{theorem 1}). Finally, we calculate the commutators of the loop generators at the level of Grothendieck group.

In Section \ref{section 6}, we show that there is a categorical action of the $q=0$ affine Hecke algebras on the bounded derived category of coherent sheaves on the full flag variety by interpreting the Demazure operators in terms of the elements in the shifted $q=0$ affine algebra (Theorem \ref{theorem 4}).

\subsection{Acknowledgements}
This article is the author's thesis work, and the author would like to thank his supervisor Sabin Cautis for his patients and guidance. Many valuable ideas were provided by him. Next, the author would like to thank Roman Bezrukavnikov, Joel Kamnitzer, Yukinobu Toda and Yu Zhao for providing some helpful quick comments about this article, and also thank Wu-Yen Chuang for some helpful discussions. Finally, special thank to Harrison Chen for his careful reading of the introduction and for providing much helpful feedback.

\section{Shifted $q=0$ affine algebra} \label{section2}
In this section, we first recall the definition of shifted quantum affine algebra from \cite{FT}. Then we give the definition of shifted $q=0$ affine algebra. 

\subsection{Shifted quantum affine algebra}
We give the definition of shifted quantum affine algebras from \cite{FT}, which basically is from Section 5 in \textit{loc. cit.}.

First, we fix some notations. Let $\cg$ be a simple Lie algebra, $\ch \subset \cg$ be a Cartan subalgebra and $\big( \cdot , \cdot \big)$ be a non-degenerated invariant symmetric bilinear form on $\cg$. Let $\{\alpha_{i}\}_{i \in I} \subset \ch^*$ be the simple roots of $\cg$ relative to $\ch$ and $\{\alpha^{\vee}_{i}\}_{i \in I} \subset \ch$ be the simple coroots. Let $c_{ij}:=2\frac{( \alpha^{\vee}_{i},\alpha^{\vee}_{j})}{( \alpha^{\vee}_{i},\alpha^{\vee}_{i} )}$ be the entries of the Cartan matrix and $d_{i}:=\frac{ ( \alpha^{\vee}_{i},\alpha^{\vee}_{i} )}{2}$  such that $d_{i}c_{ij}=d_{j}c_{ji}$ for any $i,j \in I$. We also fix the notations $q_{i}:=q^{d_{i}}$, $[m]_{q}:=\frac{q^{m}-q^{-m}}{q-q^{-1}}$ and ${a \brack b}_{q}=\frac{[a-b+1]_{q}...[a]_{q}}{[1]_{q}...{b}_{q}}$.

\begin{definition} \label{definition'}
	Given two coweights $\mu^{+}, \mu^{-}$, set $b^{\pm}_{i}:=\alpha_{i}(\mu^{\pm})$. Then \textit{the shifted quantum affine algebra} (simply-connected version), denoted by $\Uu_{\mu^{+},\mu^{-}}$, is an associated $\CC(q)$ algebra generated by 
	\[
	\{e_{i,r},\ f_{i,r},\ (\psi^{\pm}_{i,\pm s^{\pm}_{i}}),\ (\psi^{\pm}_{i,\mp b^{\pm}_{i}})^{-1} \}^{r \in \ZZ, \ s_{i}^{\pm} \geq - b_{i}^{\pm}}_{i \in I}
	\]  subject to the following relations (for all $i, j \in I$ and $\epsilon, \epsilon' \in \{\pm\}$)
	\begin{equation}\tag{U1}   \label{U1}
    [\psi^{\epsilon}_{i}(z),\psi^{\epsilon'}_{j}(w)]=0, \ \psi^{\pm}_{i,\mp b^{\pm}_{i}}(\psi^{\pm}_{i,\mp b^{\pm}_{i}})^{-1}=(\psi^{\pm}_{i,\mp b^{\pm}_{i}})^{-1}\psi^{\pm}_{i,\mp b^{\pm}_{i}}=1,
	\end{equation}
	\begin{equation} \tag{U2} \label{U2}
    (z-q^{c_{ij}}_{i}w)e_{i}(z)e_{j}(w)=(q^{c_{ij}}_{i}z-w)e_{j}(w)e_{i}(z),
	\end{equation}
	\begin{equation}\tag{U3} \label{U3}
    (q^{c_{ij}}_{i}z-w)f_{i}(z)f_{j}(w)=(z-q^{c_{ij}}_{i}w)f_{j}(w)f_{i}(z),
	\end{equation}
	\begin{equation}\tag{U4} \label{U4}
     (z-q^{c_{ij}}_{i}w)\psi^{\epsilon}_{i}(z)e_{j}(w)=(q^{c_{ij}}_{i}z-w)e_{j}(w)\psi^{\epsilon}_{i}(z),
	\end{equation}
	\begin{equation} \tag{U5} \label{U5}
	(q^{c_{ij}}_{i}z-w)\psi^{\epsilon}_{i}(z)f_{j}(w)=(z-q^{c_{ij}}_{i}w)f_{j}(w)\psi^{\epsilon}_{i}(z),
	\end{equation}
	\begin{equation} \tag{U6} \label{U6}
	   [e_{i}(z),f_{j}(w)]=\frac{\delta_{ij}}{q_{i}-q^{-1}_{i}}\delta(\frac{z}{w})(\psi_{i}^{+}(z)-\psi_{i}^{-}(z)),
	\end{equation}
	\begin{align} 
	& \Sym_{z_{1},...,z_{1-c_{ij}}} \sum^{1-c_{ij}}_{r=0} (-1)^{r} {1-c_{ij} \brack r}_{q_{i}} e_{i}(z_{1})....e_{i}(z_{r})e_{j}(w)e_{i}(z_{r+1})...e_{i}(z_{1-c_{ij}})=0, \tag{U7} \label{U7}  \\
	&\Sym_{z_{1},...,z_{1-c_{ij}}} \sum^{1-c_{ij}}_{r=0} (-1)^{r} {1-c_{ij} \brack r}_{q_{i}} f_{i}(z_{1})....f_{i}(z_{r})f_{j}(w)f_{i}(z_{r+1})...f_{i}(z_{1-c_{ij}})=0, \tag{U8} \label{U8}
	\end{align}
	where $\Sym_{z_{1},...,z_{s}}$ stands for the symmetrization in $z_{1},...,z_{s}$ and the generating series are defined as follows 
	\begin{equation*}
	   e_{i}(z):=\sum_{r\in \ZZ}e_{i,r}z^{-r}, f_{i}(z):=\sum_{r\in \ZZ}f_{i,r}z^{-r}, \psi^{\pm}_{i}(z):=\sum_{r \geq -b^{\pm}_{i}}\psi^{\pm}_{i, \pm r}z^{\mp r}, \delta(z):=\sum_{r \in \ZZ} z^{r}.
	\end{equation*}
\end{definition}

Let us introduce another set of Cartan generators $\{h_{i,r}\}^{r>0}_{i \in I}$ via \begin{equation*}
    (\psi^{\pm}_{i, \mp b^{\pm}_{i}}z^{\pm b_{i}^{\pm}})^{-1}\psi^{\pm}_{i}(z)=\exp\Big(\pm (q_{i}-q^{-1}_{i}) \sum_{r>0} h_{i,\pm r}z^{\mp r}\Big).
\end{equation*} With this, the relations (\ref{U4}), (\ref{U5}) are equivalent to the following:
\begin{align*}
    &\psi^{\pm}_{i, \mp b^{\pm}_{i}}e_{j,s}=q^{\pm c_{ij}}_{i}e_{j,s}\psi^{\pm}_{i, \mp b^{\pm}_{i}}, \ [h_{i,r},e_{j,s}]=\frac{[rc_{ij}]_{q_{i}}}{r}e_{j,r+s}, \\
    &\psi^{\pm}_{i, \mp b^{\pm}_{i}}f_{j,s}=q^{\mp c_{ij}}_{i}f_{j,s}\psi^{\pm}_{i, \mp b^{\pm}_{i}}, \ [h_{i,r},f_{j,s}]=-\frac{[rc_{ij}]_{q_{i}}}{r}f_{j,r+s}.
\end{align*}

We mention some remarks about the properties of $\Uu_{\mu^{+},\mu^{-}}$ that have been addressed in \cite{FT}.

\begin{remark}
(1) The algebra $\Uu_{\mu^{+},\mu^{-}}$ depends only on $\mu:=\mu^{+}+\mu^{-}$ up to isomorphism. We say that $\Uu_{\mu^{+},\mu^{-}}$ is dominantly (resp. antidominantly) shifted if $\mu$ is a dominant (resp. antidominant) coweight.  \\
(2) We have $\Uu_{0,0}/(\psi^{+}_{i,0}\psi^{-}_{i,0}-1) \simeq \Uu_{q}(L\cg)$-the standard quantum loop algebra of $\cg$.
\end{remark}

When $\Uu_{\mu^{+},\mu^{-}}$ is antidominantly shifted (i.e. $\mu=\mu^{+}+\mu^{-}$ is antidominant), then it admits another presentation, which is the so-called Levendorskii type presentation.

\begin{definition} \label{definition 6} 
	Given antidominant coweights $\mu_{1}, \mu_{2}$, set $\mu=\mu_{1}+\mu_{2}$. Define $b_{1,i}:=\alpha_{i}(\mu_{1})$, $b_{2,i}:=\alpha_{i}(\mu_{2})$, $b_{i}=b_{1,i}+b_{2,i}$. Then we denote $\hat{\Uu}_{\mu_{1},\mu_{2}}$ to be the associated $\CC(q)$ algebra generated by 
	\[
	\{e_{i,r},\ f_{i,s},\ (\psi^{+}_{i,0})^{\pm 1},\ (\psi^{-}_{i,b_{i}})^{\pm 1},\ h_{i,\pm 1}\ | \ i\in I, \ b_{2,i}-1 \leq r \leq 0, \ b_{1,i} \leq s \leq 1 \}
	\] 
	subject to the following relations
	\begin{equation}\tag{U1'}   \label{U1'}
	\{(\psi^{+}_{i,0})^{\pm 1},\ (\psi^{-}_{i,b_{i}})^{\pm 1},\ h_{i,\pm 1} \}_{i \in I} \ \mathrm{pairwise\ commute},
	\end{equation}
	\begin{equation} \tag{U2'} \label{U2'}
	(\psi^{+}_{i,0})^{\pm 1} \cdot (\psi^{+}_{i,0})^{\mp 1}= (\psi^{-}_{i,b_{i}})^{\pm 1} \cdot (\psi^{-}_{i,b_{i}})^{\mp 1}=1,
	\end{equation}
	\begin{equation}\tag{U3'} \label{U3'}
	e_{i,r+1}e_{j,s}-q_{i}^{c_{ij}}e_{i,r}e_{j,s+1}=q_{i}^{c_{ij}}e_{j,s}e_{i,r+1}-e_{j,s+1}e_{i,r},
	\end{equation}
	\begin{equation}\tag{U4'} \label{U4'}
	q_{i}^{c_{ij}}f_{i,r+1}f_{j,s}-f_{i,r}f_{i,s+1}=f_{j,s}f_{i,r+1}-q_{i}^{c_{ij}}f_{j,s+1}f_{i,r},
	\end{equation}
	\begin{align} 
	&\psi_{i,0}^{+}e_{j,r}=q_{i}^{c_{ij}}e_{j,r}\psi_{i,0}^{+}, \ \psi_{i,b_{i}}^{-}e_{j,r}=q_{i}^{-c_{ij}}e_{j,r}\psi_{i,b_{i}}^{-},\ [h_{i,\pm 1},e_{j,r}]=[c_{ij}]_{q_{i}} e_{j,r\pm 1}, \tag{U5'} \label{U5'} \notag \\
	&\psi_{i,0}^{+}f_{j,s}=q_{i}^{-c_{ij}}f_{j,s}\psi_{i,0}^{+}, \ \psi_{i,b_{i}}^{-}f_{j,s}=q_{i}^{c_{ij}}f_{j,s}\psi_{i,b_{i}}^{-},\ [h_{i,\pm 1},f_{j,s}]=-[c_{ij}]_{q_{i}} f_{j,s\pm 1}, \tag{U6'} \label{U6'}
	\end{align}
	\begin{equation} \tag{U7'} \label{U7'} 
	[e_{i,r},f_{j,s}]=0 \ \mathrm{if} \ i \neq j\ \ \mathrm{and}\ \  [e_{i,r},f_{i,s}]= \begin{cases}
	\psi^{+}_{i,0}h_{i,1} & \text{if} \  r+s=1 \\
	\frac{\psi^{+}_{i,0}-\delta_{b_{i},0}\psi_{i,b_{i}}^{-}}{q_{i}-q_{i}^{-1}} & \text{if} \ r+s=0 \\
	0 & \text{if} \  b_{i}+1 \leq r+s \leq-1 \\
	\frac{-\psi^{-}_{i,b_{i}}+\delta_{b_{i},0}\psi_{i,0}^{-}}{q_{i}-q_{i}^{-1}} & \text{if} \ r+s=b_{i} \\
	\psi^{-}_{i,b_{i}}h_{i,-1} & \text{if} \ r+s=b_{i}-1
	\end{cases},
	\end{equation}
	\begin{equation}\tag{U8'} \label{U8'} 
	\sum_{r=0}^{1-c_{ij}}(-1)^{r} { 1-c_{ij} \brack r}_{q_{i}} e^{r}_{i,0}e_{j,0}e_{i,0}^{1-c_{ij}-r}=0, \ \sum_{r=0}^{1-c_{ij}}(-1)^{r}{ 1-c_{ij} \brack r}_{q_{i}} f^{r}_{i,0}f_{j,0}f_{i,0}^{1-c_{ij}-r}=0,
	\end{equation}
	\begin{equation}\tag{U9'} \label{U9'}
	[h_{i,1},[f_{i,1},[h_{i,1},e_{i,0}]]]=0,\ [h_{i,-1},[e_{i,b_{2,i}-1},[h_{i,-1},f_{i,b_{1,i}}]]]=0,
	\end{equation}
	for any $i,j \in I$ and $r,s$ such that the above relations make sense.
\end{definition}

With those generators, then we define inductively
\begin{align*}
    e_{i,r}&:=[2]^{-1}_{q_{i}}\begin{cases}
    [h_{i,1},e_{i,r-1}] & \text{if} \ r>0 \\
    [h_{i,-1},e_{i,r+1}] & \text{if} \ r<b_{2,i}-1,
    \end{cases} \\
    f_{i,r}&:=-[2]^{-1}_{q_{i}}\begin{cases}
    [h_{i,1},f_{i,r-1}] & \text{if} \ r>1 \\
    [h_{i,-1},f_{i,r+1}] & \text{if} \ r<b_{1,i},
    \end{cases} \\
    \psi_{i,r}^{+}&:=(q_{i}-q^{-1}_{i})[e_{i,r-1},f_{i,1}] \ \text{for} \ r>0, \\
    \psi_{i,r}^{-}&:=(q^{-1}_{i}-q_{i})[e_{i,r-b_{1,i}},f_{i,b_{1,i}}] \ \text{for} \ r<b_{i}.
\end{align*}

Then we have the following theorem, which says that in the antidominantly shifted setting, the two presentations from Definition \ref{definition'} and Definition \ref{definition 6} are equivalent.

\begin{theorem}[Theorem 5.5 in \cite{FT}] \label{Theorem''}
There is a $\CC(q)$-algebra isomorphism $\hat{\Uu}_{\mu_{1},\mu_{2}} \rightarrow \Uu_{0,\mu}$ such that 
\begin{equation*}
e_{i,r} \mapsto e_{i,r}, \ f_{i,r} \mapsto f_{i,r}, \ \psi^{\pm}_{i,\pm s^{\pm}_{i}} \mapsto \psi^{\pm}_{i,\pm s^{\pm}_{i}} \ \text{for} \ i \in I, \ r \in \ZZ, \ s^{+}_{i} \geq 0, \ s^{-}_{i} \geq -b_{i}.
\end{equation*}
\end{theorem}

\begin{remark}
    We list some relations explicitly for the readers when $\cg=\SL_{n}$, which is the main type of Lie algebras that we will study for the shifted $q=0$ affine algebra later. In this case, we have $c_{ij}=\begin{cases}
	2 & \text{if} \ i=j \\
	-1 & \text{if} \ |i-j|=1 \\
	0 & \text{if} \ |i-j| \geq 2
	\end{cases}$, and $d_{i}=1$ for all $i$. The Cartan matrix is given by 
	\[
	(c_{ij})=
	\begin{pmatrix}
	2 & -1 & 0 & \dots  & 0 & 0 \\
	-1 & 2 & -1 & \dots  & 0 & 0 \\
	\vdots & \vdots & \vdots & \ddots & \vdots & \vdots \\
	0 & 0 & 0 & \dots  & -1 & 2 
	\end{pmatrix}.
	\]
	
	Then $q_{i}=q$ for all $i$ and the numbers $c_{ij}$ in the relations of the algebra $\hat{\Uu}_{\mu_{1},\mu_{2}}$ in Definition \ref{definition 6} for $\cg=\SL_{n}$ are known. For example, some of the relations in (\ref{U3'}) are $e_{i,r+1}e_{i,s}-q^{2}e_{i,r}e_{i,s+1}=q^{2}e_{i,s}e_{i,r+1}-e_{i,s+1}e_{i,r}$, similarly for (\ref{U4'}). The relations in (\ref{U5'}) are $\psi_{i,0}^{+}e_{i,r}=q^{2}e_{i,r}\psi_{i,0}^{+}$, $\psi_{i,b_{i}}^{-}e_{i,r}=q^{-2}e_{i,r}\psi_{i,b_{i}}^{-}$, and $[h_{i,\pm 1},e_{i,r}]=[2]_{q} e_{i,r\pm 1}$, similarly for (\ref{U6'}). The relation (\ref{U8'}) is just the (quantum) Serre relations, for example, $e_{i,0}e_{j,0}e_{i,0}=\frac{1}{[2]_{q}}(e^{2}_{i,0}e_{j,0}+e_{j,0}e^{2}_{i,0})$.
\end{remark}

\subsection{Definition of the shifted $q=0$ affine algebras}

In this section, we define the shifted $q=0$ affine algebras. We define it by imitating Definition \ref{definition 6}, which is by finite generators and relations. The main reason we use such presentation is due to its simplicity and can be easily to define a categorical action for it (see next section).

On the other hand, we define another algebra in the appendix \ref{appendix A} by using the usual loop presentation (see Definition \ref{Definition 1}). Similarly to Theorem \ref{Theorem''}, we conjecture that the two presentations, i.e. Definition \ref{definition 6} and Definition \ref{Definition 1}, are equivalent (see Conjecture \ref{conjecture 1}).

In \cite{BLM}, they introduce the dot version (or idempotent modification) $\dot{\Uu}_{q}(\SL_2)$ of $\Uu_{q}(\SL_2)$, since our motivation comes from their geometric construction, the shifted $q=0$ affine algebras we introduce below is also an idempotent version. This means that we replace the identity by the direct sum of a system of projectors, one for each element of the weight lattices. They are orthogonal idempotents for approximating the unit element. We refer to part IV in \cite{Lu} for details of such modification.

Throughout the rest of this article, we fix a positive integer $N \geq 2$. Let
\[
C(n,N):=\{\underline{k}=(k_1,...,k_{n}) \in \NN^n\ | \ k_{1}+...+k_{n}=N \}.
\] 

We regard each $\underline{k}$ as a weight for $\SL_{n}$ via the identification of the weight lattice of $\SL_n$ with the quotient $\ZZ^n/(1,1,...,1)$. We choose the simple root $\alpha_{i}$ to be $(0...0,-1,1,0...0)$ where the $-1$ is in the $i$-th position for $1 \leq i \leq n-1$. Then we introduce the shifted $q=0$ affine algebra for $\SL_n$, which is defined by using finite generators and relations.

\begin{definition} \label{definition 1}
	We define the \textit{shifted q=0 affine algebra}, denote by $\dot{\Uu}_{0,N}(L\SL_n)$, to be the associative $\CC$-algebra generated by
	\begin{equation*}
	\bigcup_{\kk \in C(n,N)}\{1_{\kk}, \ e_{i,r}1_{\kk}, \ f_{i,s}1_{\kk},\ (\psi^{+}_{i})^{\pm 1}1_{\kk}, \ (\psi^{-}_{i})^{\pm 1}1_{\kk},\ h_{i,\pm 1}1_{\kk} \}_{1 \leq i \leq n-1}^{-k_{i}-1 \leq r \leq 0, \ 0 \leq s \leq k_{i+1}+1}
	\end{equation*}
	with the following relations
	\begin{equation} \tag{U01}   \label{U01}
	1_{\kk}1_{\Ll}=\delta_{\kk,\Ll}1_{\kk}, \ e_{i,r}1_{\kk}=1_{\kk+\alpha_{i}}e_{i,r}, \   f_{i,r}1_{\kk}=1_{\kk-\alpha_{i}}f_{i,r}, \ (\psi^{+}_{i})^{\pm 1}1_{\kk}=1_{\kk}(\psi^{+}_{i})^{\pm 1},\
	h_{i,\pm 1}1_{\kk}=1_{\kk}h_{i, \pm 1},	
	\end{equation}
	\begin{equation}\tag{U02}   \label{U02}
	\{(\psi^{+}_{i})^{\pm 1}1_{\kk},(\psi^{-}_{i})^{\pm 1}1_{\kk},h_{i,\pm 1}1_{\kk}\ | \ 1\leq i \leq n-1,\kk \in C(n,N)\} \ \mathrm{pairwise\ commute},
	\end{equation}
	\begin{equation} \tag{U03} \label{U03}
	(\psi^{+}_{i})^{\pm 1} \cdot (\psi^{+}_{i})^{\mp 1} 1_{\kk} = 1_{\kk}=(\psi^{-}_{i})^{\pm 1} \cdot (\psi^{-}_{i})^{\mp 1}1_{\kk},
	\end{equation}
	\begin{align*} \tag{U04} \label{U04}
	&e_{i,r}e_{j,s}1_{\kk}=\begin{cases}
	-e_{i,s+1}e_{i,r-1}1_{\kk} & \text{if} \ j=i \\
	e_{i+1,s}e_{i,r}1_{\kk}-e_{i+1,s-1}e_{i,r+1}1_{\kk} & \text{if} \ j=i+1 \\
	e_{i,r+1}e_{i-1,s-1}1_{\kk}-e_{i-1,s-1}e_{i,r+1}1_{\kk} &\text{if} \ j=i-1 \\
	e_{j,s}e_{i,r}1_{\kk} &\text{if} \ |i-j| \geq 2
	\end{cases},
	\end{align*}
	
	\begin{align*}\tag{U05} \label{U05}
	&f_{i,r}f_{j,s}1_{\kk}=\begin{cases}
	-f_{i,s-1}f_{i,r+1}1_{\kk} & \text{if} \ j=i \\
	f_{i,r-1}f_{i+1,s+1}1_{\kk}-f_{i+1,s+1}f_{i,r-1}1_{\kk} & \text{if} \ j=i+1 \\
	f_{i-1,s}f_{i,r}1_{\kk}-f_{i-1,s+1}f_{i,r-1}1_{\kk} &\text{if} \ j=i-1 \\
	f_{j,s}f_{i,r}1_{\kk} &\text{if} \ |i-j| \geq 2
	\end{cases},
	\end{align*}

	\begin{align*} \tag{U06} \label{U06}
	&\psi^{+}_{i}e_{j,r}1_{\kk}=
	\begin{cases}
	-e_{i,r+1}\psi^{+}_{i}1_{\kk} & \text{if} \ j=i \\
	-e_{i+1,r-1}\psi^{+}_{i}1_{\kk} & \text{if} \ j=i+1 \\
	e_{i-1,r}\psi^{+}_{i}1_{\kk}  &  \text{if} \ j=i-1 \\
	e_{j,r}\psi^{+}_{i}1_{\kk}  &  \text{if} \ |i-j| \geq 2 \\
	\end{cases},
	&\psi^{-}_{i}e_{j,r}1_{\kk}=
	\begin{cases}
	-e_{i,r+1}\psi^{-}_{i}1_{\kk} & \text{if} \ j=i \\
	e_{i+1,r}\psi^{-}_{i}1_{\kk} & \text{if} \ j=i+1 \\
	-e_{i-1,r-1}\psi^{-}_{i}1_{\kk}  &  \text{if} \ j=i-1 \\
	e_{j,r}\psi^{-}_{i}1_{\kk}  &  \text{if} \ |i-j| \geq 2 \\
	\end{cases},
	\end{align*}

	\begin{align*} \tag{U07} \label{U07}
	&\psi^{+}_{i}f_{j,r}1_{\kk}=
	\begin{cases}
	-f_{i,r-1}\psi^{+}_{i}1_{\kk} & \text{if} \ j=i \\
	-f_{i+1,r+1}\psi^{+}_{i}1_{\kk} & \text{if} \ j=i+1 \\
	f_{i-1,r}\psi^{+}_{i}1_{\kk}  &  \text{if} \ j=i-1 \\
	f_{j,r}\psi^{+}_{i}1_{\kk}  &  \text{if} \ |i-j| \geq 2 \\
	\end{cases},
	&\psi^{-}_{i}f_{j,r}1_{\kk}=
	\begin{cases}
	-f_{i,r-1}\psi^{-}_{i}1_{\kk} & \text{if} \ j=i \\
	f_{i+1,r}\psi^{-}_{i}1_{\kk} & \text{if} \ j=i+1 \\
	-f_{i-1,r+1}\psi^{-}_{i}1_{\kk}  &  \text{if} \ j=i-1 \\
	f_{j,r}\psi^{-}_{i}1_{\kk}  &  \text{if} \ |i-j| \geq 2 \\
	\end{cases},
	\end{align*}

	\begin{align*} \tag{U08} \label{U08} 
	&[h_{i,\pm 1},e_{j,r}]1_{\kk}=\begin{cases}
	0 & \text{if} \ i=j \\
	-e_{i+1,r\pm 1}1_{\kk} & \text{if} \ j=i+1 \\
	e_{i-1,r\pm 1}1_{\kk} & \text{if} \  j=i-1 \\
	0& \text{if} \ |i-j| \geq 2 \\
	\end{cases}, 
	&[h_{i,\pm 1},f_{j,r}]1_{\kk}=\begin{cases}
	0 & \text{if} \ i=j \\
	f_{i+1,r\pm 1}1_{\kk} & \text{if} \ j=i+1 \\
	-f_{i-1,r\pm 1}1_{\kk} & \text{if} \  j=i-1 \\
	0& \text{if} \ |i-j| \geq 2 \\
	\end{cases},
	\end{align*}
	
	\begin{equation} \tag{U09} \label{U09} 
	[e_{i,r},f_{j,s}]1_{\kk}=0 \ \mathrm{if} \ i \neq j\ \ \mathrm{and}\ \  [e_{i,r},f_{i,s}]1_{\kk}= \begin{cases}
	\psi^{+}_{i}h_{i,1}1_{\kk} & \text{if} \  r+s=k_{i+1}+1 \\
	\psi^{+}_{i}1_{\kk} & \text{if} \ r+s=k_{i+1} \\
	0 & \text{if} \  -k_{i}+1 \leq r+s \leq k_{i+1}-1 \\
	-\psi^{-}_{i}1_{\kk} & \text{if} \ r+s=-k_{i} \\
	-\psi^{-}_{i}h_{i,-1}1_{\kk} & \text{if} \ r+s=-k_{i}-1
	\end{cases},
	\end{equation}
	
	for any $1 \leq i,j \leq n-1$ and $r,s$ such that the above relations make sense.
\end{definition}

We discuss a bit about the use of the notation $\dot{\Uu}_{0,N}(L\SL_n)$, and why we call it the name shifted $q=0$ affine algebra. We make the discussion to be a list of remarks.

\begin{remark}
First, the $0$ stands for the word $"q=0"$. However, the word $"q=0"$ does not mean that we  substitute $q=0$ directly in the relations of the shifted quantum affine algebras.

Many of the relations in Definition \ref{definition 6} can not be substituted by $q=0$ directly, since $q^{-1}$ will appear when $|i-j|=1$ and also in the commutator relations. But some of them can be, from the relations (\ref{U3'}), (\ref{U4'}), in $\SL_n$ case we have $c_{ij}=2$ when $i=j$. Taking $q=0$, we can see that they become the relations (\ref{U04}), (\ref{U05}) when $i=j$. 
\end{remark}

\begin{remark}
Second, the number $N$ stands for the word "shifted".
Also, the word "shifted" does not really mean that our algebra is shifted by a certain coweight. The number $N$ indicates that our algebra depends on a choice of highest weight (for example, $(0,N)$ in the $\SL_{2}$ case). Moreover, the commutator relation (\ref{U09}) implies that the commutators between $e_{i,r}$ and $f_{i,s}$ on each weight space with weight $\kk$ vanishing on the range $1-k_{i} \leq r+s \leq k_{i+1}-1$. This can be modified to $-k_{i}-k_{i+1}+1 \leq -r-s-k_{i} \leq -1$, and it is similar to the range in the relation (\ref{U7'}) with $b_{i}=-k_{i}-k_{i+1}$. This pretty much suggests us that our algebra is also antidominantly shifted in the sense of \cite{FT}, but in a weight space-wise pattern.
\end{remark}

\begin{remark}
Third, although we call it "affine algebra", we use the loop algebra $L\SL_{n}$ notation instead of $\widehat{\SL_{n}}$. The reason is that the representations we consider in this article are all finite dimensional, and the central extension acts trivially on finite dimensional representations.
\end{remark}

Thus, although the precise relations between our algebras and the shifted quantum affine algebras are still unclear, the above evidences suggest us to call this algebra the name shifted $q=0$ affine algebra.

\section{Categorical $\dot{\Uu}_{0,N}(L\SL_n)$ action} \label{section3}

In this section, we give a definition of the categorical action for shifted $q=0$ affine algebra that defined in Definition \ref{definition 1}. 

Again, we use the notation $C(n,N)$ and $\alpha_{i}$ defined in Section \ref{section2}. We also denote by $\langle\cdot,\cdot\rangle:\ZZ^n \times \ZZ^n \rightarrow \ZZ$ the standard pairing. The definition follows the similar formalism as in \cite{CK2}, which also come from the definition of $(\cg, \theta)$ action defined in \cite{C}.

Before we give the definition, we have to mention that for the usual quantum affine algebra $\Uu_{q}(\widehat{\cg})$, there are two presentations, one is the Kac-Moody presentation and the other is the (Drinfeld-Jimbo) loop realization. The Kac-Moody presentation has the advantage that it is given by finite number of generators and relations, while the loop realization is better for checking actions (on geometry) in practice. The categorical actions for the two presentations are also quite different. 

For the Kac-Moody presentation, its categorical actions have been studied in much detail \cite{CR}, \cite{KL1} \cite{KL2}, \cite{KL3}, \cite{R}. On the other hand, the study of the categorical actions for the loop realization is much more less (see \cite{CK2} for a working definition). Thus, even though the presentation in Definition \ref{definition 1} is not so canonical compare to the usual loop presentation that given in the appendix A, the finiteness of generators and relations is the main reason for us to use  to work with and to give a definition of its categorical action.

\begin{definition}  \label{definition 2}
	A categorical $\dot{\Uu}_{0,N}(L\SL_n)$ action consists of a target 2-category $\Kk$, which is triangulated, $\CC$-linear and idempotent complete. The objects in $\Kk$ are
	\[
	\mathrm{Ob}(\Kk)=\{\Kk(\kk)\ |\ \kk \in C(n,N) \}
	\] where each $\Kk(\kk)$ is also a triangulated category, and each Hom space $\mathrm{Hom}(\Kk(\kk),\Kk(\Ll))$ is also  triangulated. 
	
	The 1-morphisms are given by the following:
	\begin{equation*}
		\bo_{\kk}, \ {\E}_{i,r}\bo_{\kk}=\bo_{\kk+\alpha_{i}}{\E}_{i,r}, \ {\F}_{i,s}\bo_{\kk}=\bo_{\kk-\alpha_{i}}{\F}_{i,s}, \ ({\spi}^{\pm}_{i})^{\pm 1}\bo_{\kk}=\bo_{\kk}({\spi}^{\pm}_{i})^{\pm 1}, \ {\Hhh}_{i,\pm1}\bo_{\kk}=\bo_{\kk}{\Hhh}_{i,\pm 1}   
	\end{equation*} where $1 \leq i \leq n-1$, $-k_{i}-1 \leq r \leq 0$, $0 \leq s \leq k_{i+1}+1$. Here $\bo_{\kk}$ is the identity functor of $\Kk(\kk)$. Those 1-morphisms subject to the following conditions.
	\begin{enumerate}
		\item The space of maps between any two 1-morphisms is finite dimensional.
		\item If $\alpha=\alpha_{i}$ or $\alpha=\alpha_{i}+\alpha_{j}$ for some $i,j$ with $\langle\alpha_{i},\alpha_{j}\rangle=-1$, then $\bo_{\kk+r\alpha}=0$ for $r \gg 0$ or  $r \ll 0$.
		\item Suppose $i \neq j$. If $\bo_{\kk+\alpha_{i}}$ and $\bo_{\kk+\alpha_{j}}$ are nonzero, then $\bo_{\kk}$ and $\bo_{\kk+\alpha_{i}+\alpha_{j}}$ are also nonzero.
		\item ${\Hhh}_{i,\pm1}\bo_{\kk}$ are adjoint to each other, i.e. $({\Hhh}_{i,1}\bo_{\kk})^{L} \cong \bo_{\kk}{\Hhh}_{i,-1}  \cong ({\Hhh}_{i,1}\bo_{\kk})^{R}$.
		\item The left and right adjoints of $\E_{i,r}$ and $\F_{i,s}$ are given by conjugation of $\spi^{\pm}_{i}$ up to homological shifts. More precisely,
		\begin{enumerate}
			\item $({\E}_{i,r}\bo_{\kk})^{R} \cong \bo_{\kk}{({\spi}^{+}_{i})^{r+1}}{\F}_{i,k_{i+1}+2}({\spi}^{+}_{i})^{-r-2}[-r-1]$ for all $1 \leq i \leq n-1$,
			\item $({\E}_{i,r}\bo_{\kk})^{L} \cong \bo_{\kk}({\spi}^{-}_{i})^{r+k_{i}-1}{\F}_{i,0}({\spi}^{-}_{i})^{-r-k_{i}}[r+k_{i}]$ for all $1 \leq i \leq n-1$,
			\item $({\F}_{i,s}\bo_{\kk})^{R} \cong \bo_{\kk}({\spi}^{-}_{i})^{-s+1}{\E}_{i,-k_{i}-2}({\spi}^{-}_{i})^{s-2}[s-1]$ for all $1 \leq i \leq n-1$,
			\item $({\F}_{i,s}\bo_{\kk})^{L} \cong \bo_{\kk}({\spi}^{+}_{i})^{-s+k_{i+1}-1}{\E}_{i,0}({\spi}^{+}_{i})^{s-k_{i+1}}[-s+k_{i+1}]$ for all $1 \leq i \leq n-1$.
		\end{enumerate}
		\item 	
		\begin{align*}
		&({\spi}_{i}^{\pm})^{\pm 1} ({\spi}_{j}^{\pm})^{\pm 1} \bo_{\kk} \cong ({\spi}_{j}^{\pm})^{\pm 1}({\spi}_{i}^{\pm})^{\pm 1} \bo_{\kk} \ \text{for all} \ i, j, \\
		&({\spi}_{i}^{+})^{\pm 1} ({\spi}_{i}^{+})^{\mp 1} \bo_{\kk} \cong ({\spi}_{i}^{-})^{\pm 1}({\spi}_{i}^{-})^{\mp 1} \bo_{\kk} \cong \bo_{\kk} \ \text{for all} \ i.
		\end{align*}
		\item ${\Hhh}_{i,\pm 1} {\Hhh}_{j,\pm 1} \bo_{\kk}  \cong  {\Hhh}_{j,\pm 1} {\Hhh}_{i,\pm 1} \bo_{\kk}$ for all $i,j$.
		\item ${\Hhh}_{i,\pm 1} {\spi}_{j}^{\pm} \bo_{\kk} \cong {\spi}_{j}^{\pm} {\Hhh}_{i,\pm 1} \bo_{\kk}$ for all $i,j$.
		\item The relations between ${\E}_{i,r}$, ${\E}_{j,s}$ are given by the following
		\begin{enumerate}
			\item We have 
			\[
			{\E}_{i,r+1}{\E}_{i,s}\bo_{\kk} \cong \begin{cases}
			{\E}_{i,s+1}{\E}_{i,r}\bo_{\kk}[-1] & \text{if} \ r-s \geq 1 \\
			0 & \text{if} \ r=s \\
			{\E}_{i,s+1}{\E}_{i,r}\bo_{\kk}[1] & \text{if} \ r-s \leq -1. 
			\end{cases}
			\]
			\item ${\E}_{i,r}$, ${\E}_{i+1,s}$  would related by the following exact triangle
			\[
			 {\E}_{i+1,s}{\E}_{i,r+1}\bo_{\kk} \rightarrow {\E}_{i+1,s+1}{\E}_{i,r}\bo_{\kk} \rightarrow {\E}_{i,r}{\E}_{i+1,s+1}\bo_{\kk}. 
			\]
			\item We have 
			\[
			{\E}_{i,r}{\E}_{j,s}\bo_{\kk} \cong {\E}_{j,s}{\E}_{i,r}\bo_{\kk}, \  \text{if} \ |i-j| \geq 2.
			\]
		\end{enumerate}
		\item The relations between ${\F}_{i,r}$, ${\F}_{j,s}$ are given by the following
		\begin{enumerate}
			\item We have
			\[
			{\F}_{i,r}{\F}_{i,s+1}\bo_{\kk} \cong \begin{cases}
			{\F}_{i,s}{\F}_{i,r+1}\bo_{\kk}[1] & \text{if} \ r-s \geq 1 \\
			0 & \text{if} \ r=s \\
			{\F}_{i,s}{\F}_{i,r+1}\bo_{\kk}[-1] & \text{if} \ r-s \leq -1. 
			\end{cases}  
			\]
			\item ${\F}_{i,r}$, ${\F}_{i+1,s}$ are related by the following exact triangles
			\[
			{\F}_{i,r+1}{\F}_{i+1,s}\bo_{\kk} \rightarrow {\F}_{i,r}{\F}_{i+1,s+1}\bo_{\kk} \rightarrow {\F}_{i+1,s+1}{\F}_{i,r}\bo_{\kk} .
			\] 
			\item  We have 
			\[
			{\F}_{i,r}{\F}_{j,s}\bo_{\kk} \cong {\F}_{j,s}{\F}_{i,r}\bo_{\kk},  \ \text{if} \ |i-j| \geq 2.
			\]
		\end{enumerate}

		\item The relations between ${\E}_{i,r}$, $\Psi^{\pm}_{j}$ are given by the following
		\begin{enumerate}
			\item For $i=j$, we have 
			\[
			{\spi}^{\pm}_{i}{\E}_{i,r}\bo_{\kk} \cong {\E}_{i,r+1}{\spi}^{\pm}_{i}\bo_{\kk}[\mp 1].
			\]
			\item For $|i-j|=1$, we have the following
			\[
			{\spi}^{\pm}_{i}{\E}_{i\pm1,r}\bo_{\kk} \cong {\E}_{i\pm1,r-1}{\spi}^{\pm}_{i}\bo_{\kk}[\pm 1],
			\]
			\[
			{\spi}^{\pm}_{i}{\E}_{i\mp 1,r}\bo_{\kk} \cong {\E}_{i\mp 1,r} {\spi}^{\pm}_{i}\bo_{\kk}.
			\]
			\item For $|i-j| \geq 2$, we have
			\[
			{\spi}^{\pm}_{i}{\E}_{j,r}\bo_{\kk} \cong {\E}_{j,r} {\spi}^{\pm}_{i}\bo_{\kk}.
			\]
		\end{enumerate}
		
		\item The relations between ${\F}_{i,r}$, $\Psi^{\pm}_{j}$ are given by the following
		\begin{enumerate}
			\item For $i=j$, we have 
			\[
			{\spi}^{\pm}_{i}{\F}_{i,r}\bo_{\kk} \cong {\F}_{i,r-1}{\spi}^{\pm}_{i}\bo_{\kk}[\pm 1].
			\]
			\item For $|i-j|=1$, we have the following
			\[
			{\spi}^{\pm}_{i}{\F}_{i\pm1,s}\bo_{\kk} \cong {\F}_{i\pm1,s+1}{\spi}^{\pm}_{i}\bo_{\kk}[\mp 1],
			\]
			\[
			{\spi}^{\pm}_{i}{\F}_{i\mp1,r}\bo_{\kk} \cong {\F}_{i\mp1,r}{\spi}^{\pm}_{i}\bo_{\kk}.
			\]
			\item For  $|i-j| \geq 2$, we have
			\[
			{\spi}^{\pm}_{i,}{\F}_{j,r}\bo_{\kk} \cong {\F}_{j,r}{\spi}^{\pm}_{i}\bo_{\kk}.
			\]
		\end{enumerate}
		
		\item The relations between ${\E}_{i,r}$, $\Hhh_{j,\pm 1}$ are given by the following
		\begin{enumerate}
			\item For $i=j$, they are related by the following exact triangles
			\[
			{\Hhh}_{i,1}{\E}_{i,r}\bo_{\kk} \rightarrow {\E}_{i,r}{\Hhh}_{i,1}\bo_{\kk} \rightarrow ({\E}_{i,r+1}\bigoplus {\E}_{i,r+1}[1])\bo_{\kk}, 
			\]
			\[
			 {\E}_{i,r}{\Hhh}_{i,-1}\bo_{\kk} \rightarrow {\Hhh}_{i,-1}{\E}_{i,r}\bo_{\kk} \rightarrow   ({\E}_{i,r-1}\bigoplus {\E}_{i,r-1}[1])\bo_{\kk}.
			\]
			\item For $|i-j|=1$, they are related by the following exact triangles
			\[
			 {\Hhh}_{i,1}{\E}_{i+1,r}\bo_{\kk} \rightarrow {\E}_{i+1,r}{\Hhh}_{i,1}\bo_{\kk} \rightarrow {\E}_{i+1,r+1}\bo_{\kk} ,
			\]
			\[
			{\E}_{i-1,r+1}\bo_{\kk} \rightarrow {\Hhh}_{i,1}{\E}_{i-1,r}\bo_{\kk} \rightarrow {\E}_{i-1,r}{\Hhh}_{i,1}\bo_{\kk} ,
			\]
			\[
			{\E}_{i+1,r-1}\bo_{\kk} \rightarrow {\E}_{i+1,r}{\Hhh}_{i,-1}\bo_{\kk} \rightarrow {\Hhh}_{i,-1}{\E}_{i+1,r}\bo_{\kk} , 
			\]
			\[
			 {\E}_{i-1,r}{\Hhh}_{i,-1}\bo_{\kk} \rightarrow {\Hhh}_{i,-1}{\E}_{i-1,r}\bo_{\kk} \rightarrow {\E}_{i-1,r-1}\bo_{\kk} . 
			\]
			\item For $|i-j|\geq 2$, we have 
			\[
			{\Hhh}_{i,\pm 1}{\E}_{j,r}\bo_{\kk} \cong {\E}_{j,r}{\Hhh}_{i,\pm 1}\bo_{\kk}.
			\]
		\end{enumerate}
		\item The relations between ${\F}_{i,r}$, $\Hhh_{j,\pm 1}$ are given by the following
		\begin{enumerate}
			\item For $i=j$, they are related by the following exact triangles
			\[
			{\F}_{i,r}{\Hhh}_{i,1}\bo_{\kk} \rightarrow {\Hhh}_{i,1}{\F}_{i,r}\bo_{\kk} \rightarrow  ({\F}_{i,r+1}\bigoplus {\F}_{i,r+1}[1])\bo_{\kk},
			\]
			\[
			{\Hhh}_{i,-1}{\F}_{i,r}\bo_{\kk} \rightarrow {\F}_{i,r}{\Hhh}_{i,-1}\bo_{\kk} \rightarrow ({\F}_{i,r-1}\bigoplus {\F}_{i,r-1}[1])\bo_{\kk} .
			\]
			\item For $|i-j|=1$, they are related by the following exact triangles
			\[
			 {\F}_{i+1,r}{\Hhh}_{i,1}\bo_{\kk} \rightarrow {\Hhh}_{i,1}{\F}_{i+1,r}\bo_{\kk} \rightarrow {\F}_{i+1,r+1}\bo_{\kk} ,
			\]
			\[
			 {\F}_{i-1,r+1}\bo_{\kk} \rightarrow {\F}_{i-1,r}{\Hhh}_{i,1}\bo_{\kk} \rightarrow {\Hhh}_{i,1}{\F}_{i-1,r}\bo_{\kk} ,
			\]
			\[
			 {\F}_{i+1,r-1}\bo_{\kk} \rightarrow {\Hhh}_{i,-1}{\F}_{i+1,r}\bo_{\kk} \rightarrow {\F}_{i+1,r}{\Hhh}_{i,-1}\bo_{\kk} ,
			\]
			\[
			{\Hhh}_{i,-1}{\F}_{i-1,r}\bo_{\kk} \rightarrow {\F}_{i-1,r}{\Hhh}_{i,-1}\bo_{\kk} \rightarrow {\F}_{i-1,r-1}\bo_{\kk} .
			\]
			\item For $|i-j|\geq 2$, we have 
			\[
			{\Hhh}_{i,\pm 1}{\F}_{j,r}\bo_{\kk} \cong {\F}_{j,r}{\Hhh}_{i,\pm 1}\bo_{\kk}.
			\]
		\end{enumerate}

		\item If $i \neq j$, then ${\E}_{i,r}{\F}_{j,s}\bo_{\kk} \cong 
		{\F}_{j,s}{\E}_{i,r}\bo_{\kk}$.
		\item For ${\E}_{i,r}{\F}_{i,s}\bo_{\kk}, {\F}_{i,s}{\E}_{i,r}\bo_{\kk} \in \mathrm{Hom}(\Kk(\kk),\Kk(\kk))$, they are related by  exact triangles, more precisely, 
		\begin{enumerate}
			\item
			\[
			{\F}_{i,s}{\E}_{i,r}{\bo}_{\kk} \rightarrow {\E}_{i,r}{\F}_{i,s}{\bo}_{\kk} \rightarrow {\spi}^{+}_{i}{\Hhh}_{i,1}{\bo}_{\kk} \ \text{if} \ r+s=k_{i+1}+1,
			\]
			\item \[
			{\F}_{i,s}{\E}_{i,r}\bo_{\kk} \rightarrow {\E}_{i,r}{\F}_{i,s}\bo_{\kk} \rightarrow {\spi}^{+}_{i}\bo_{\kk} 
			 \ \text{if} \  
			r+s =k_{i+1},
			\] 
			\item \[
			 {\E}_{i,r}{\F}_{i,s}\bo_{\kk} \rightarrow {\F}_{i,s}{\E}_{i,r}\bo_{\kk} \rightarrow {\spi}^{-}_{i}\bo_{\kk}   \ \text{if} \  r+s = -k_{i},
			\] 
			\item \[
			{\E}_{i,r}{\F}_{i,s}\bo_{\kk} \rightarrow {\F}_{i,s}{\E}_{i,r}\bo_{\kk} \rightarrow {\spi}^{-}_{i}{{\Hhh}_{i,-1}}\bo_{\kk}  \ \text{if} \  r+s = -k_{i}-1 ,
			\] 
			\item \[
			{\F}_{i,s}{\E}_{i,r}\bo_{\kk} \cong {\E}_{i,r}{\F}_{i,s}\bo_{\kk} \ \text{if} \ -k_{i}+1 \leq r+s \leq k_{i+1}-1 .
			\] 
		\end{enumerate}
	\end{enumerate}
    for all $r$, $s$ that make the above conditions make sense and the isomorphisms between functors appear in every condition are abstractly defined, i.e., we do not specify any 2-morphisms that induce those isomorphisms.
\end{definition}

First, we give some remarks about this definition.

\begin{remark}
	The 2-category $\Kk$ is called idempotent complete if for any 2-morphism $f$ with $f^2=f$, the image of $f$ is contained in $\Kk$.
\end{remark}

\begin{remark}
	Note that in our definition of categorical action, we do not have the linear maps
	\[\text{Span}\{\alpha_{i} \ | \ 1 \leq i\leq n-1\} \rightarrow \End^{2}(\bo_{\kk}), \ \kk \in C(n,N)
	\] which is used to give the element $\theta$ in the definition of  $(\cg,\theta)$ or $(\hat{\cg},\theta)$ action in \cite{C} or \cite{CL}.
	This is because usually the geometry of the spaces that appear in our setting does not have a natural flat deformation, see Section \ref{section 5} for our examples. The data of flat deformation can be used to obtain linear map $\text{Span}\{\alpha_{i}\ | \ 1 \leq i\leq n-1\} \rightarrow \End^{2}(\bo_{\kk})$, which was showed in \cite{C}.
\end{remark}

Next, we discuss a bit about the categorical action we purpose above.

By categorical action, we lift the generators $e_{i,r}1_{\kk}, \ f_{i,s}1_{\kk},\ (\psi^{\pm}_{i})^{\pm 1}1_{\kk},\ h_{i,\pm 1}1_{\kk}$ to the 1-morphisms ${\E}_{i,r}\bo_{\kk}$, ${\F}_{i,s}\bo_{\kk}$, $({\spi}^{\pm}_{i})^{\pm 1}\bo_{\kk}$, ${\Hhh}_{i,\pm1}\bo_{\kk}$. Then after lifting, we naively replace the equalities between elements in the relations (\ref{U01}),...,(\ref{U09}) to (abstract) isomorphisms between the 1-morphisms. Moreover, for relations with equality involves 3 elements, we may want to do some modifications so that it becomes a meaningful relations at the categorical level like Theorem \ref{theorem} for the categorical $\SL_{2}$ action. However, we need to be a bit careful since the categories where those 1-morphism act are require to be triangulated. Thus, instead of using direct sum of 1-morphisms for the categorical action, we use exact triangles to lift relations with equality involves 3 elements.

We list the comparison between the relations in Definition \ref{definition 1} and conditions in Definition \ref{definition 2}.

Relation (\ref{U01}) is obvious. 

Relations (\ref{U02}) and (\ref{U03}) lift to conditions (6), (7), and (8). 

Relations (\ref{U04}) and (\ref{U05}) lift to conditions (9), (10). Note that the exact triangles in (9)(b) and (10)(b) lift the relations with equality involves 3 elements.

Relations (\ref{U06}) and (\ref{U07}) lift to conditions (11) and (12). Note that the minus sign in the relations lifts to either homological shifted by $[1]$ or $[-1]$ that depend on whether the relation involves $\psi_{i}^{+}$ or $\psi_{i}^{-}$.

Relation (\ref{U09}) lifts to conditions (15) and (16).  

Finally, relation (\ref{U08}) lifts to conditions (13) and (14). However, we need to be careful for this time since condition (13)(a) and (14)(a) is highly non-trivial and not so intuitive compare to the above lifts of relations. After passing to the Grothendieck groups, the homological shift $[1]$ becomes multiplication by $(-1)$ and gives the desire relations. We will give a geometric example in Section \ref{section 5} that satisfy these two non-trivial conditions (i.e. Theorem \ref{theorem 3}). 

\section{Preliminaries on coherent sheaves and Fourier-Mukai kernels} \label{section 4}
In this section, we briefly recall the notions about Fourier-Mukai transforms/kernels and other related tools that we would use for proofs in later sections.  The readers can consult the book by Huybrechts \cite{H} for details.

We will mostly work with the bounded
derived category of coherent sheaves on an algebraic variety $X$, which we simply denote it by $\Dd^b(X)$. Throughout this article, functors between derived categories are assumed to be derived functors, for example, we will just write $f^*$, $f_{*}$ instead of $Lf^*$, $Rf_{*}$, resp. 

Let $X$, $Y$ be two smooth projective varieties. A Fourier-Mukai kernel is any object $\Pp$ in the
derived category of coherent sheaves on $X \times Y$ . Given $\Pp \in \Dd^b(X \times Y)$, we define
the associated Fourier-Mukai transform, which is the functor
\[
\Phi_{\Pp}:\Dd^b(X) \rightarrow \Dd^b(Y) 
\]
\[
\ \ \ \ \ \ \ \ \Ff \mapsto \pi_{2*}(\pi_{1}^*(\Ff) \otimes \Pp).
\]

We call $\Phi_{\Pp}$ the Fourier-Mukai transform with (Fourier-Mukai) kernel $\Pp$. For convenience, we would just write FM for Fourier-Mukai. The first property of FM transforms is that they have left and right adjoints that are themselves FM transforms.

\begin{proposition} (\cite{H} Proposition 5.9) \label{Proposition 1}
	For $\Phi_{\Pp}:\Dd^b(X) \rightarrow \Dd^b(Y)$ is the FM transform with kernel $\Pp$, define 
	\[
	\Pp_{L}=\Pp^{\vee} \otimes \pi^*_{2}\omega_{Y}[\tdim  Y], \ \Pp_{R}=\Pp^{\vee} \otimes \pi^*_{1}\omega_{X}[\tdim X].
	\] Then
	\[
	\Phi_{\Pp_{L}}:\Dd^b(Y) \rightarrow \Dd^b(X), \ \text{and} \ \Phi_{\Pp_{R}}:\Dd^b(Y) \rightarrow \Dd^b(X)
	\] are the left and right adjoints of $\Phi_{\Pp}$, respectively.
\end{proposition}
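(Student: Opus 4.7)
The plan is to deduce both adjunctions through a chain of standard isomorphisms built from three tools: the ordinary adjunctions $\pi_i^* \dashv \pi_{i*}$ for the projections $\pi_1 : X \times Y \to X$ and $\pi_2 : X \times Y \to Y$; the tensor-hom adjunction (applicable because $\Pp$ is a perfect complex on the smooth variety $X \times Y$, hence dualizable, so $\Hom(A \otimes \Pp, B) \cong \Hom(A, \Pp^{\vee} \otimes B)$); and Grothendieck-Serre duality, which equips each proper pushforward $\pi_{i*}$ with a right adjoint $\pi_i^{!}(-) \cong \pi_i^{*}(-) \otimes \w_{\pi_i}[\tdim \pi_i]$, where the relative dualizing sheaves are $\w_{\pi_1} = \pi_2^{*}\w_Y$ and $\w_{\pi_2} = \pi_1^{*}\w_X$ with relative dimensions $\tdim Y$ and $\tdim X$, respectively.

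For the right adjoint, given $\Ff \in \Dd^b(X)$ and $\Gg \in \Dd^b(Y)$, I would assemble the string
\[
\Hom(\Phi_{\Pp}\Ff, \Gg) \cong \Hom(\pi_1^{*}\Ff \otimes \Pp, \pi_2^{!}\Gg) \cong \Hom(\pi_1^{*}\Ff, \Pp^{\vee} \otimes \pi_2^{!}\Gg) \cong \Hom(\Ff, \pi_{1*}(\Pp^{\vee} \otimes \pi_2^{!}\Gg))
\]
using Grothendieck-Serre duality, then tensor-hom, then $\pi_1^{*} \dashv \pi_{1*}$. Substituting $\pi_2^{!}\Gg = \pi_2^{*}\Gg \otimes \pi_1^{*}\w_X[\tdim X]$ and regrouping the factors, the inner object becomes $\pi_2^{*}\Gg \otimes \Pp_R$, so the expression equals $\Hom(\Ff, \Phi_{\Pp_R}\Gg)$ as desired.

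The left adjoint is dual in form: starting from $\Hom(\Gg, \Phi_{\Pp}\Ff)$, I would apply the ordinary adjunction $\pi_2^{*} \dashv \pi_{2*}$, then tensor-hom to move $\Pp$ across as $\Pp^{\vee}$, and finally Grothendieck-Serre duality for the proper map $\pi_1$ to convert $\Hom(-, \pi_1^{*}\Ff)$ into a pushforward, which introduces the twist $\pi_2^{*}\w_Y[\tdim Y]$ that combines with $\Pp^{\vee}$ to form $\Pp_L$. There is no substantive obstacle here, as every step is a formal six-functor manipulation on smooth projective varieties; the only thing requiring genuine care is bookkeeping for the precise twist and homological shift, which is exactly what distinguishes $\Pp_L$ from $\Pp_R$ and explains the asymmetry whenever $\w_X$ and $\w_Y$ differ. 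Verifying this bookkeeping is the main (and essentially only) work of the proof.
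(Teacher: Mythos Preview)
Your argument is correct and is precisely the standard proof found in the cited reference. Note, however, that the paper itself does not supply a proof of this proposition: it is quoted as background from Huybrechts' book (Proposition 5.9 there), so there is no in-paper argument to compare against. Your chain of adjunctions---Grothendieck--Serre duality for the proper projections, tensor--hom using perfectness of $\Pp$, and the ordinary $\pi_i^*\dashv\pi_{i*}$---is exactly how the result is established in \cite{H}, and your bookkeeping of the relative dualizing sheaves $\omega_{\pi_1}\cong\pi_2^*\omega_Y$, $\omega_{\pi_2}\cong\pi_1^*\omega_X$ and the corresponding shifts is accurate.
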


The second property is the composition of FM transforms is also a FM transform.

\begin{proposition} (\cite{H} Proposition 5.10) \label{Proposition 2}
	Let $X, Y, Z$ be smooth projective varieties over $\CC$. Consider objects $\Pp \in \Dd^b(X \times Y)$ and $\Qq \in \Dd^b(Y \times Z)$. They define FM transforms $\Phi_{\Pp}:\Dd^b(X) \rightarrow \Dd^b(Y)$, $\Phi_{\Qq}:\Dd^b(Y) \rightarrow \Dd^b(Z)$.  We would use $\ast$ to denote the operation for convolution, i.e.
	\[
	\Qq \ast \Pp:=\pi_{13*}(\pi_{12}^*(\Pp)\otimes \pi_{23}^{*}(\Qq)).
	\]
	
	Then for $\R=\Qq \ast \Pp \in \Dd^b(X \times Z)$, we have $\Phi_{\Qq} \circ \Phi_{\Pp} \cong \Phi_{\R}$. 
\end{proposition}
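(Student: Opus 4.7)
The plan is to unfold the composition $\Phi_\Qq \circ \Phi_\Pp$ on an object $\Ff \in \Dd^b(X)$ using the two intermediate double products, and then transport everything up to the triple product $X \times Y \times Z$ via flat base change and the projection formula. Let me denote by $p^{AB}_i$ the $i$-th projection from a double product $A \times B$, and by $\pi_i$, $\pi_{ij}$ the projections from the triple product $X \times Y \times Z$ to the single factors and to the double products, respectively. Unwinding the definitions, one has
\[
\Phi_\Qq(\Phi_\Pp(\Ff)) = p^{YZ}_{2*}\bigl((p^{YZ}_1)^* p^{XY}_{2*}((p^{XY}_1)^* \Ff \otimes \Pp) \otimes \Qq\bigr).
\]

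The key observation is that the Cartesian square
\[
\begin{tikzcd}
X \times Y \times Z \arrow[r,"\pi_{23}"] \arrow[d,"\pi_{12}"] & Y \times Z \arrow[d,"p^{YZ}_1"] \\
X \times Y \arrow[r,"p^{XY}_2"] & Y
\end{tikzcd}
\]
consists of flat morphisms, so derived flat base change yields the natural isomorphism $(p^{YZ}_1)^* \circ p^{XY}_{2*} \cong \pi_{23*} \circ \pi_{12}^*$. Substituting this into the formula above and then invoking the projection formula to slide $\Qq$ inside $\pi_{23*}$ produces
\[
\Phi_\Qq(\Phi_\Pp(\Ff)) \cong \pi_{3*}\bigl(\pi_{12}^*(p^{XY}_1)^*\Ff \otimes \pi_{12}^*\Pp \otimes \pi_{23}^*\Qq\bigr),
\]
using the factorisation $\pi_3 = p^{YZ}_2 \circ \pi_{23}$.

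To finish, I would factor $\pi_3 = p^{XZ}_2 \circ \pi_{13}$ and rewrite $\pi_{12}^*(p^{XY}_1)^*\Ff = \pi_1^*\Ff = \pi_{13}^*(p^{XZ}_1)^*\Ff$; a second application of the projection formula pulls $(p^{XZ}_1)^*\Ff$ out of $\pi_{13*}$, yielding
\[
\Phi_\Qq(\Phi_\Pp(\Ff)) \cong p^{XZ}_{2*}\bigl((p^{XZ}_1)^*\Ff \otimes \pi_{13*}(\pi_{12}^*\Pp \otimes \pi_{23}^*\Qq)\bigr) = \Phi_\R(\Ff).
\]

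The only real work here is bookkeeping with the six projections and verifying that every isomorphism supplied by base change and by the projection formula is natural in $\Ff$, so that the pointwise isomorphisms assemble into a genuine isomorphism of functors $\Phi_\Qq \circ \Phi_\Pp \cong \Phi_\R$. Because all projections between products of smooth projective varieties are flat, derived base change applies with no Tor-independence obstruction, and naturality of both base change and the projection formula is standard; hence no essential difficulty is anticipated beyond careful indexing.
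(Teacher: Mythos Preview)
Your argument is correct and is the standard proof via flat base change and the projection formula. The paper itself does not prove this proposition; it simply cites it as Proposition~5.10 in Huybrechts's book \cite{H}, and your write-up is essentially the argument given there.
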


\begin{remark} \label{remark 1}
	Moreover by \cite{H} remark 5.11, we have $(\Qq \ast \Pp)_{L} \cong (\Pp)_{L} \ast (\Qq)_{L}$ and $(\Qq \ast \Pp)_{R} \cong (\Pp)_{R} \ast (\Qq)_{R}$.
\end{remark}

The final result we will need in later is about derived pushforward of coherent sheaves. Let $\V$ be a vector bundle of rank $n$ on a variety $X$, where $n \geq 2$. Then we can form the projective bundle $\PP(\V)$. We get in this way a $\PP^{n-1}$-fibration $\pi: \PP(\V) \rightarrow X$. Let  $\Oo_{\PP(\V)}(-1)$ be the relative tautological bundle and $\Oo_{\PP(\V)}(1)$ be the dual bundle, and we define  $\Oo_{\PP(\V)}(i):=\Oo_{\PP(\V)}(1)^{\otimes i}$ for $i \in \ZZ$. Then we have the following result.

\begin{proposition}\label{proposition 3} (Exercise 8.4 in Chapter 3 from \cite{Ha})
	\begin{equation*}
	\pi_{*} \Oo_{\PP(\V)}(i) \cong   \begin{cases} \Sym^i(\V^{\vee}) & \text{if} \ i \geq 0 \\ 0 & \text{if} \  1-n \leq i\leq -1 \\ \Sym^{-i-n}(\V) \otimes \tdet(\V)[1-n]   & \text{if} \ i \leq -n \end{cases} 
	\end{equation*}
	in the derived category $\Dd^b(X)$, where $\V^{\vee}=R\mathcal{H}om(\V,\Oo_{X}) \in \Dd^b(X)$.
\end{proposition}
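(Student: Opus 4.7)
The plan is to treat the three ranges of $i$ separately, via a common reduction. Since $\pi$ is smooth and proper of relative dimension $n-1$, cohomology and base change gives $(R^j\pi_*\Oo_{\PP(\V)}(i))_x \otimes k(x) \cong H^j(\PP^{n-1}, \Oo(i))$ for each $x \in X$. The dimensions of these groups are constant in $x$, so each $R^j\pi_*\Oo_{\PP(\V)}(i)$ is locally free of the expected rank, and only its identification as a specific $\Oo_X$-module remains.

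For $i \geq 0$ the higher cohomology of $\Oo_{\PP^{n-1}}(i)$ vanishes, so only $\pi_*$ is nonzero. The natural multiplication map $\Sym^i(\pi_*\Oo(1)) \to \pi_*\Oo(i)$ is a map of locally free sheaves of the same rank that is an isomorphism on fibers by the $X=\mathrm{pt}$ case, hence an isomorphism; combined with $\pi_*\Oo(1) \cong \V^\vee$ (the defining property of $\Oo(1)$ in the convention used here) this gives the first case. For $1-n \leq i \leq -1$, the classical vanishing $H^j(\PP^{n-1},\Oo(i)) = 0$ for every $j$ in this middle range, combined again with base change, forces all $R^j\pi_*\Oo_{\PP(\V)}(i)$ to vanish.

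For $i \leq -n$, I would apply relative Serre duality. From the relative Euler sequence $0 \to \Oo \to \pi^*\V^\vee \otimes \Oo(1) \to T_\pi \to 0$ one reads off $\omega_\pi = \Oo(-n) \otimes \pi^*\tdet(\V)^\vee$. Then relative Grothendieck duality together with the projection formula yields
\begin{align*}
R\pi_*\Oo(i) \;\cong\; \bigl(R\pi_*(\Oo(-i)\otimes\omega_\pi)\bigr)^\vee[1-n] \;\cong\; \bigl(R\pi_*\Oo(-i-n)\bigr)^\vee \otimes \tdet(\V)\,[1-n],
\end{align*}
where the factor $\tdet(\V)^\vee$ coming from $\omega_\pi$ is flipped to $\tdet(\V)$ by the outer dualization via $(A\otimes L)^\vee = A^\vee \otimes L^{-1}$. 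Since $-i-n\geq 0$, the first case identifies $R\pi_*\Oo(-i-n)=\Sym^{-i-n}(\V^\vee)$, whose $\Oo_X$-dual is $\Sym^{-i-n}(\V)$; substituting produces the stated formula.

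The main technical point is the sign/dualization bookkeeping in the last case, namely making sure that the $\tdet(\V)^\vee$ contributed by $\omega_\pi$ combines correctly with the outer dualization to leave $\tdet(\V)$ in the answer, and that the shift $[1-n]$ lands in the right cohomological degree. Everything else is standard projective-space cohomology together with flat base change for the smooth proper morphism $\pi$.
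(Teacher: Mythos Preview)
Your proposal is correct and follows essentially the same approach as the paper: both compute $\omega_\pi$ from the relative Euler sequence and then use Grothendieck/Verdier duality to reduce the $i\leq -n$ case to the $i\geq 0$ case, with the middle range handled by fiberwise vanishing. The only cosmetic differences are that the paper phrases the duality via the Verdier dual functors $\DD_X,\DD_{\PP(\V)}$ rather than relative Serre duality, and it isolates the $i=-n$ case first before bootstrapping to general $i\leq -n$; your packaging is slightly more direct but the content is the same.
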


\begin{remark}
Note that the above result is slightly different from the result in \cite{Ha}. The reason is that the convention for $\PP(\V)$ is different from \cite{Ha}. More precisely, in this article $\PP(\V)$ parametrizes one-dimensional "sub-bundles" of $\V$ while in \cite{Ha} $\PP(\V)$ parametrizes one-dimensional "quotient-bundles" of $\V$.
\end{remark}

\section{A geometric example} \label{section 5}

In this section, we give a geometric example that satisfies  Definition \ref{definition 2} of categorical $\dot{\Uu}_{0,N}(L\SL_n)$ action. That means we have to define the categories $\Kk(\kk)$ and $1$-morphisms ${\E}_{i,r}\bo_{\kk}$, ${\F}_{i,s}\bo_{\kk}$, ${\Hhh}_{i,\pm 1}\bo_{\kk}$, $({\spi}_{i}^{\pm})^{\pm 1}\bo_{\kk}$ so that they satisfy the conditions.

\subsection{An overview} \label{5.0}

Since we use the language of FM transformas, beside the spaces we use to define the categories $\Kk(\kk)$, we need to introduce more geometric spaces (varieties) that serve as the roles of correspondences to define the kernels for those $1$-morphisms. However, giving the definitions of those varieties directly in $\SL_{n}$ case without explanations may make some readers feel a bit technical at first glance and can not see the reasons behind them.

Thus instead of moving to the general definitions directly, we decide to give the readers an overview of the main ideas behind the definitions and proofs first.

Because many relations in the Definition \ref{definition 2} can be reduced to the $\SL_2$ case,  we mainly focus on the $\SL_{2}$ case here. In this case, the categories are the bounded derived category of coherent sheaves on Grassmannians $\Dd^b(\GG(k,N))$. And from the introduction, the correspondence we use to define the $1$-morphisms ${\E}_{r}\bo_{(k,N-k)}$ is the $3$-step partial flag variety $Fl(k-1,k)$ in diagram (\ref{diagram9}), similar for ${\F}_{s}\bo_{(k,N-k)}$.

To verify those conditions in Definition \ref{definition 2}, we have to calculate many convolutions of FM kernels. However, most of the conditions are easy to verify by using standard tools like base change, projection formula and Proposition \ref{proposition 3}. The most interesting one is the commutator relations, i.e. condition (16), and the nontrivial conditions are (13)(a), (14)(a).

First, we discuss condition (16) and we have to compare two convolution of FM kernels. The final step of convolution is pushforward induced from the projection $\pi_{13}$ to the product of the first and third components. In the constructible/coherent setting, to calculate the convolutions of kerenls for ${\E}{\F}$/${\E}_{r}{\F}_{s}$ and ${\F}{\E}$/${\F}_{s}{\E}_{r}$, we need to know what are the geometric spaces that the projection $\pi_{13}$ really restrict to. 

They are the following varieties 
\begin{align*}
Z&=\{(V,V',V'') \in \GG(k,N) \times \GG(k+1,N) \times \GG(k,N) \ | \ V, \ V'' \overset{1}{\subset} V' \}, \\
Z'&=\{(V,V''',V'') \in \GG(k,N) \times \GG(k-1,N) \times \GG(k,N) \ | \ V''' \overset{1}{\subset} V, \ V''\}
\end{align*} that naturally arise when calculate convolution of kerenls for ${\E}{\F}$/${\E}_{r}{\F}_{s}$ and ${\F}{\E}$/${\F}_{s}{\E}_{r}$ respectively (see Lemma \ref{lemma 5} for details). Restricting the projections to $Z$ and $Z'$, in order to distinguish them we use $\pi'_{13}$ for $Z'$. Then we obtain the following diagram  
\begin{equation} \label{diagram x}
    \xymatrix@R=0.5cm@C=-80pt{
    Z' \ar[rd]^{\pi'_{13}}  & & Z  \ar[ld]_{\pi_{13}} \\
    & Y=\{(V,V'') \in \GG(k,N) \times \GG(k,N) \ | \ \dim(V \cap V'') \geq k-1 \}.
    }
\end{equation} 

\begin{remark}
$Y$ is singular with 2 resolutions $Z$ and $Z'$.
\end{remark}

In the constructible setting, assume that $\lambda=N-2k \geq 0$, then in the diagram (\ref{diagram x}) the map $\pi'_{13}$ is a small resolution and $\pi_{13}$ is not a small resolution. Using the theory about perverse sheaves (or IC sheaves), we obtain the isomorphism ${\E\F}|_{\Cc(\lambda)} \cong {\F\E}|_{\Cc(\lambda)} \bigoplus Id_{\Cc(\lambda)}^{\oplus \lambda}$ \footnote{Here note that this is an isomorphism between kernels, we do not want to introduce more notations to make the readers getting confuse.}. The extra term $Id_{\Cc(\lambda)}^{\oplus \lambda}$ is contributed from $\pi_{13}$ since it is not small. Moreover, the extra term has a cohomological interpretation, which be thought as $Id_{\Cc(\lambda)} \otimes H_{sing}^{*}(\PP^{\lambda-1},\CC)$, see \cite{CKL2}.

However, in the coherent setting, we do not have powerful tools like the decomposition theorem. We use the fibered product 
$X:=Z' \times_{Y} Z$ which is defined as follows
\begin{equation} \label{x}
  X \coloneqq \{(V''',V,V'',V')  \in \GG(k-1,N) \times \GG(k,N) \times \GG(k,N) \times \GG(k+1,N) \ |  \ V''' \subset V \subset V', \ V''' \subset V''\subset V' \} 
\end{equation} and denote $p:X \rightarrow Y$ is the natural projection. Moreover, there is a divisor $D \subset X$ which is the locus where $V=V''$ and cutted out by the natural sections $\V''/\V''' \rightarrow \V'/\V$ where $\V''', \ \V, \ \V'', \ \V'$ are the natural tautological bundles on $X$. Thus we have the following short exact sequence  
\begin{equation} \label{ses''}
0 \rightarrow \V''/\V''' \rightarrow \V'/\V \rightarrow \Oo_{D} \otimes \V'/\V \rightarrow 0.
\end{equation}

To compare the kernels for ${\E}_{r}{\F}_{s}$ and ${\F}_{s}{\E}_{r}$, instead of directly pushing forward to $Y$, we pullback them to $X$ and using the short exact sequence (\ref{ses''}) together with Proposition \ref{proposition 3} to help us prove the result (see the proof of Proposition \ref{proposition 5}).

The kernels for $\spi^{\pm}\bo_{(k,N-k)}$, $\Hhh_{\pm 1}\bo_{(k,N-k)}$ are produced under the above comparison, and they are reflected by the non-vanishing of coherent sheaf cohomology $H^{*}(\PP^{N-1}, \Oo_{\PP^{N-1}}(-r-s-k)) \neq 0$. Also, the kernels for $\spi^{\pm}\bo_{(k,N-k)}$ are just line bundles up to homological shifts while the kernels for $\Hhh_{\pm 1}\bo_{(k,N-k)}$ are far more difficult to describe (see Definition \ref{definition 3} or proof of Proposition \ref{proposition 5}).

Let $\Hh_{1}\bo_{(k,N-k)}$ denote the kernel for $\Hhh_{ 1}\bo_{(k,N-k)}$. Then the key property we know is that it fits to the following exact triangle in $\Dd^b(\GG(k,N) \times \GG(k,N))$ 
\begin{equation} \label{ses'''}
    \Delta_{*}\V \rightarrow  \Hh_{1}\bo_{(k,N-k)} \rightarrow \Delta_{*}\CC^N/\V
\end{equation} where $\Delta$ is the diagonal map. Moreover, the extension class in $\Ext^{1}$ determines $\Hh_{1}\bo_{(k,N-k)}$ such that it is neither isomorphic to $\Delta_{*}\CC^N$ nor to $\Delta_{*}(\V\oplus\CC^N/\V)$ (see Theorem \ref{theorem'}), similarly for $\Hh_{-1}\bo_{(k,N-k)}$. This pretty much suggests that $\Hh_{\pm 1}\bo_{(k,N-k)}$ are supported on a neighborhood of the diagonal in $\GG(k,N) \times \GG(k,N)$.

Now we discuss condition (13)(a), (14)(a). Similarly, we have to compare the kernels for ${\Hhh}_{1}{\E}_{r}\bo_{(k,N-k)}$ and ${\E}_{r}{\Hhh}_{1}\bo_{(k,N-k)}$. The main idea is to keep track the extension class for the exact triangle (\ref{ses'''}) under the process of convolutions. In order to induce a morphsim from ${\Hhh}_{1}{\E}_{r}\bo_{(k,N-k)}$ to ${\E}_{r}{\Hhh}_{1}\bo_{(k,N-k)}$, we have to verify commutativity of a square that involve the two elements in the extension class for the resulting exact triangles after convolutions. Finally, by taking cone, we show that the cone is the kernel for ${\E}_{r+1}\oplus{\E}_{r+1}[1]$ (see proof of Theorem \ref{theorem 3}).

\subsection{Correspondences and the main result.} \label{5.1}
In this subsection, first we define those geometric spaces (varieties) that will be used to define the categories $\Kk(\kk)$ and FM kerenls for the 1-morphisms ${\E}_{i,r}\bo_{\kk}$, ${\F}_{i,s}\bo_{\kk}$, ${\Hhh}_{i,\pm 1}\bo_{\kk}$, $({\spi}_{i}^{\pm})^{\pm 1}\bo_{\kk}$. They can be viewed as a $\SL_{n}$ generalization of the varieties mention in Section \ref{5.0}.

For each $\kk \in C(n,N)$, we define the $n$-step partial flag variety 
\begin{equation} \label{eq fl}
Fl_{\kk}(\CC^N):=\{V_{\bullet}=(0=V_{0} \subset V_1 \subset ... \subset V_{n}=\CC^N) \ | \ \tdim V_{i}/V_{i-1}=k_{i} \ \text{for} \ \text{all} \ i\}.
\end{equation}

We denote $Y(\kk)=Fl_{\kk}(\CC^N)$ and $\Dd^b(Y(\kk))$ to be the bounded derived categories of coherent sheaves on $Y(\kk)$. On $Y(\kk)$ we denote $\V_{i}$ to be the tautological bundle whose fibre over a point $(0=V_{0} \subset V_1 \subset ... \subset V_{n}=\CC^N)$ is $V_{i}$. We define the following correspondence, which can be though as $\SL_{n}$ generalization of $Fl(k-1,k)$.

\begin{definition}
We define $W^{1}_{i}(\kk)$ to be the subvariety of $Y(\kk) \times Y(\kk+\alpha_{i})$
\begin{equation*}
W^{1}_{i}(\kk):=\{(V_{\bullet},V_{\bullet}')  \ | \ V_{j}=V'_{j} \ \Rm{for} \ j \neq i,  \ V'_{i} \subset V_{i}\} \subset   Y(\kk) \times Y(\kk+\alpha_{i})
\end{equation*} and similarly its transpose correspondence 
\begin{equation*}
^{\TTt}W^{1}_{i}(\kk):=\{(V_{\bullet},V_{\bullet}')  \ | \ V_{j}=V'_{j} \ \Rm{for} \ j \neq i,  \ V_{i} \subset V'_{i}\} \subset  Y(\kk+\alpha_{i}) \times Y(\kk)
\end{equation*} for all $\kk \in C(n,N)$ and $i$.  
\end{definition}

We denote $\iota(\kk):W^{1}_{i}(\kk) \hookrightarrow Y(\kk) \times Y(\kk+\alpha_{i})$ and $^{\TTt}\iota(\kk):^{\TTt}W^{1}_{i}(\kk) \hookrightarrow Y(\kk+\alpha_{i}) \times Y(\kk)$ to be the natural inclusions. We also have the natural line bundle $\V_{i}/\V'_{i}$ on $W^{1}_{i}(\kk)$ and similarly $\V'_{i}/\V_{i}$ on $^{\TTt}W^{1}_{i}(\kk)$, where $1 \leq i \leq n$,.

Next, we define the following varieties that can be thought as 
$\SL_{n}$ generalization of $X$ in (\ref{x}).

\begin{definition}
We define $X_{i}(\kk)$ to be the subvariety of $Y(\kk+\alpha_{i}) \times Y(\kk) \times Y(\kk) \times Y(\kk-\alpha_{i})$
\begin{equation*}
X_{i}(\kk) \coloneqq \{ (V_{\bullet}''',V_{\bullet},V_{\bullet}'',V_{\bullet}')  \ | \ 
 V'''_{i}\subset V_{i} \subset V'_{i}, \
V'''_{i}\subset V''_{i} \subset V'_{i}, \ V'''_{j}=V_{j}=V''_{j}=V'_{j} \ \forall \ j \neq i \} 
\end{equation*} and the divisor $D_{i}(\kk) \subset X_{i}(\kk)$ that defined by
\begin{equation*}
D_{i}(\kk):=\{ (V_{\bullet}''',V_{\bullet},V_{\bullet}'',V_{\bullet}')  \ | \ 
 V'''_{i}\subset V_{i}=V''_{i} \subset V'_{i}, \ V'''_{j}=V_{j}=V''_{j}=V'_{j} \ \forall \ j \neq i \}.
\end{equation*} 
\end{definition}

Note $D_{i}(\kk)$ is cut out by the natural section of the line bundles $\Hh om(\V_{i}''/\V_{i}''',\V_{i}'/\V_{i})$. More precisely, we have $\Oo_{X_{i}(\kk)}(-D_{i}(\kk)) \cong \V_{i}''/\V_{i}''' \otimes (\V_{i}'/\V_{i})^{-1}$ and the following short exact sequences
\begin{equation*}
0 \rightarrow \V_{i}''/\V_{i}''' \rightarrow \V_{i}'/\V_{i} \rightarrow \Oo_{D_{i}(\kk)} \otimes \V_{i}'/\V_{i} \rightarrow 0.
\end{equation*} We obtain similar results if we view $D_{i}(\kk)$ is cut out by the natural section of the line bundles $\Hh om(\V_{i}/\V_{i}''',\V_{i}'/\V''_{i})$.

Finally, we define the $\SL_{n}$ generalization of $Y$ in diagram (\ref{diagram x}).
\begin{definition}
\begin{equation*}
Y_{i}(\kk)=\{(V_{\bullet},V_{\bullet}'') \in  Y(\kk) \times Y(\kk) \ | \
\dim V_{i}  \cap V''_{i} \geq \sum_{l=1}^{i}k_{l}-1, \ V_{j}=V''_{j} \ \forall \ j \neq i \}.
\end{equation*}
\end{definition}

Let $p_{i}(\kk):X_{i}(\kk) \rightarrow Y_{i}(\kk)$ be the natural projection defined by forgetting $V_{\bullet}'''$ and $V_{\bullet}'$, $t_{i}(\kk):Y_{i}(\kk) \rightarrow Y(\kk) \times Y(\kk)$ be the inclusion and $\Delta(\kk):Y(\kk) \rightarrow Y(\kk) \times Y(\kk)$ to be the diagonal map.

Then we define the 1-morphisms via using FM transforms with kernels involve the geometric spaces we introduce above.

\begin{definition} \label{definition 3}
	We define  $\bo_{\kk}$, ${\E}_{i,r}\bo_{\kk}$, $\bo_{\kk}{\F}_{i,s}$, $({\spi}^{\pm}_{i})^{\pm 1}\bo_{\kk}$, ${\Hhh}_{i,\pm 1}\bo_{\kk}$ to be FM transforms with the corresponding kernels
	\begin{equation*}
	\begin{split}
	\bo_{\kk}&:= \Delta(\kk)_{*}\Oo_{Y(\kk)} \in \Dd^b(Y(\kk)\times Y(\kk)), \\
	\Ee_{i,r}\bo_{\kk}&:=\iota(\kk)_{*} (\V_{i}/\V_{i}')^{r} \in \Dd^b(Y(\kk)\times Y(\kk+\alpha_{i})), \\
	\bo_{\kk}\Ff_{i,r}&:=^{\TTt}\iota(\kk)_{*} (\V'_{i}/\V_{i})^{r} \in \Dd^b(Y(\kk+\alpha_{i})\times Y(\kk)), \\
	(\Psi^{+}_{i})^{\pm 1}\bo_{\kk}&:=\Delta(\kk)_{*}\tdet (\V_{i+1}/\V_{i})^{\pm 1}[\pm(1-k_{i+1})] \in \Dd^b(Y(\kk)\times Y(\kk)), \\
	(\Psi^{-}_{i})^{\pm 1}\bo_{\kk}&:=\Delta(\kk)_{*} \tdet (\V_{i}/\V_{i-1})^{\mp 1}[\pm(1-k_{i})] \in \Dd^b(Y(\kk)\times Y(\kk)), \\
	\Hh_{i,1}\bo_{\kk}&:=(\Psi_{i}^{+}\bo_{\kk})^{-1} \ast [t_{i}(\kk)_{*}p_{i}(\kk)_{*}(\Oo_{2D_{i}(\kk)} \otimes (\V_{i}'/\V_{i})^{k_{i+1}+1})] \in \Dd^b(Y(\kk) \times Y(\kk)), \\
	\Hh_{i,-1}\bo_{\kk}&:=(\Psi_{i}^{-}\bo_{\kk})^{-1} \ast [t_{i}(\kk)_{*}p_{i}(\kk)_{*}(\Oo_{2D_{i}(\kk)} \otimes (\V_{i}/\V_{i}''')^{-k_{i}-1})] \in \Dd^b(Y(\kk) \times Y(\kk)),
	\end{split}
	\end{equation*} respectively.
\end{definition}

Now we can state the main result of this article, which is the following theorem.

\begin{theorem} \label{theorem 1} 
	Let $\Kk$ be the triangulated 2-categories whose nonzero objects are $\Kk(\kk)=\Dd^b(Y(\kk))$ where $\kk \in C(n,N)$, the 1-morphisms are kernels defined in Definition \ref{definition 3} and the 2-morphisms are maps between kernels.  Then this gives a categorical $\dot{\Uu}_{0,N}(L\SL_n)$ action.
\end{theorem}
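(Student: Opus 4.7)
The strategy is to verify each clause of Definition \ref{definition 2} directly for the FM kernels given in Definition \ref{definition 3}, making systematic use of Proposition \ref{Proposition 1} (adjunctions), Proposition \ref{Proposition 2} (convolutions), and Proposition \ref{proposition 3} (pushforward along projective-bundle projections). The formal axioms fall out immediately: finiteness of Hom spaces (1) from smoothness and projectivity of $Y(\kk)$; the weight vanishing (2)-(3) from the combinatorial fact that $Y(\kk)=\emptyset$ unless $0\le k_j$ for all $j$; and the pairwise commutations among $\spi^{\pm}_i$ and $\Hhh_{j,\pm 1}$ (6)-(8) because all of these kernels are supported on the diagonal $\Delta(\kk)$ and built from commuting determinant line bundles. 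For the adjunction axioms (4), (5), I apply Proposition \ref{Proposition 1} to each kernel; dualising $\iota(\kk)_*(\V_i/\V'_i)^r$ and tensoring with the canonical bundles of $Y(\kk)$ and $Y(\kk+\alpha_i)$ (which differ by a ratio of determinant line bundles of the tautological quotients) produces exactly the conjugation by powers of $\spi^{\pm}_i$ together with the stated homological shifts. The analogous computation for $\Hhh_{i,1}$ uses Serre duality on the nonreduced divisor $2D_i$ combined with the normal-bundle identity $\Oo_{X_i(\kk)}(-D_i)\cong (\V''_i/\V'''_i)\otimes(\V'_i/\V_i)^{-1}$.

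\textbf{Single-generator relations.} For axioms (9)-(12), each composition is rewritten via Proposition \ref{Proposition 2} as a pushforward from the appropriate multi-step flag variety. The periodicity of $\Ee_{i,r+1}\Ee_{i,s}$ versus $\Ee_{i,s+1}\Ee_{i,r}$ in (9a) follows from the Koszul resolution of the codimension-one diagonal inside the two-step locus $\{V'''_i\subset V_i\subset V'_i\}$ together with Proposition \ref{proposition 3} on the resulting $\PP^1$-fibration; when $r=s$ the middle cohomology vanishes, giving the zero term. The exact triangle (9b) comes from a short exact sequence on the three-step fibered product for adjacent indices $i, i+1$ that exchanges the two inclusion orderings. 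The relations (11), (12) reduce to the line-bundle identity $\iota(\kk)_*(\V_i/\V'_i)^r \ast \Delta(\kk')_*\det(\V_{j+1}/\V_j)^{\pm 1}\cong \Delta(\kk)_*\det(\cdots)\ast \iota(\kk)_*(\V_i/\V'_i)^{r\pm 1}$, up to a shift coming from the $\pm(1-k_j)$ in the definition of $\spi^{\pm}_i$, which is a direct computation on $W^1_i(\kk)$.

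\textbf{The main obstacle: axioms (13), (14), (16).} The heart of the proof is the commutator triangles relating $\Ee_{i,r}$ to $\Ff_{i,s}$ and to $\Hhh_{i,\pm 1}$. Both compositions $\Ee_{i,r}\Ff_{i,s}\bo_{\kk}$ and $\Ff_{i,s}\Ee_{i,r}\bo_{\kk}$ are supported on $Y_i(\kk)\subset Y(\kk)\times Y(\kk)$ and obtained by pushing forward from $X_i(\kk)$ via $t_i(\kk)\circ p_i(\kk)$ with different line-bundle twists. The plan is to produce a short exact sequence on $X_i(\kk)$ whose outer terms push forward to $\Ff_{i,s}\Ee_{i,r}\bo_{\kk}$ and $\Ee_{i,r}\Ff_{i,s}\bo_{\kk}$, and whose cokernel is $\Oo_{2D_i}$ tensored with an appropriate power of $\V'_i/\V_i$; this is built from the structure sequence $0\to\Oo_{X_i(\kk)}(-2D_i)\to\Oo_{X_i(\kk)}\to\Oo_{2D_i}\to 0$ together with the two short exact sequences involving $\V''_i/\V'''_i$ and $\V'_i/\V_i$ listed in Section \ref{5.1}. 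Depending on $r+s$, Proposition \ref{proposition 3} applied to the residual $\PP^{k_{i+1}}$- or $\PP^{k_i}$-bundle projection computes the cokernel explicitly: in the middle range $-k_i+1\le r+s\le k_{i+1}-1$ the pushforward vanishes, producing the isomorphism (16e); at $r+s=k_{i+1}$ or $-k_i$ a single $\det$-factor survives, giving $\spi^{\pm}_i\bo_{\kk}$ in (16b), (16c); and at the extreme cases $r+s=k_{i+1}+1$ or $-k_i-1$ the defining formula for $\Hhh_{i,\pm 1}$ exactly matches the cokernel, producing (16a), (16d). The exact triangles of (13), (14) arise by the same mechanism, now convolving the structure sequence of $2D_i$ with $\Ee_{i,r}$; the two-term cokernel $\Ee_{i,r+1}\oplus\Ee_{i,r+1}[1]$ in (13a) reflects the two cohomology contributions of $\Oo_{2D_i}$ along the $\PP^1$-fibre in Proposition \ref{proposition 3}. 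The most delicate point will be tracking grading shifts and verifying that the direction of each distinguished triangle is correct at all four extreme values of $r+s$, where the sign conventions in axiom (5) and the inverse factors $(\spi^{\pm}_i)^{-1}$ in Definition \ref{definition 3} play essential roles.
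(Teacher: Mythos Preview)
Your overall strategy matches the paper's, but there is a genuine gap in your treatment of the kernel $\Hh_{i,\pm 1}$, and it propagates through axioms (7), (8), (13a), (14a).

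You assert that (6)--(8) are immediate because ``all of these kernels are supported on the diagonal $\Delta(\kk)$ and built from commuting determinant line bundles.'' This is false for $\Hh_{i,\pm 1}$. By its very definition, $\Hh_{i,1}\bo_{\kk}$ is $(\Psi^+_i)^{-1}\ast t_{i*}p_{i*}(\Oo_{2D_i}\otimes(\V'_i/\V_i)^{k_{i+1}+1})$, and the paper shows that this sits in a \emph{nontrivial} extension
\[
\Delta_*\V_i/\V_{i-1}\ \longrightarrow\ \Hh_{i,1}\bo_{\kk}\ \longrightarrow\ \Delta_*\V_{i+1}/\V_i
\]
whose class in $\Ext^1_{Y(\kk)\times Y(\kk)}(\Delta_*\V_{i+1}/\V_i,\Delta_*\V_i/\V_{i-1})\cong \Ext^1(\V_{i+1}/\V_i,\V_i/\V_{i-1})\oplus\Hom(\Omega_{Y(\kk)},(\V_{i+1}/\V_i)^\vee\otimes\V_i/\V_{i-1})$ is the element $(0,\mathrm{id})$. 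Identifying this class is the content of the paper's Theorem \ref{lemma 6}; it requires the auxiliary fact $\End(\Omega_{\GG(k,N)})\cong\CC$ (Lemma \ref{lemma 7}, proved via stability of the tangent bundle) together with a contradiction argument comparing cohomology sheaves in two different expressions for $\Delta^*(\Psi^+\ast\Hh_1)$. Without this identification you cannot prove (7), (8), or (13a): the commutations $\Hh_{i,\pm 1}\Hh_{j,\pm 1}\cong\Hh_{j,\pm 1}\Hh_{i,\pm 1}$ and $\Hh_{i,\pm 1}\spi^\pm_j\cong\spi^\pm_j\Hh_{i,\pm 1}$ are established in the paper by tracking this extension class through convolution, not by any commutation of line bundles.

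Your proposed mechanism for (13a) is likewise insufficient. You claim the direct sum $\Ee_{i,r+1}\oplus\Ee_{i,r+1}[1]$ in the cone ``reflects the two cohomology contributions of $\Oo_{2D_i}$ along the $\PP^1$-fibre.'' But the issue is not computing cohomology; it is showing that the connecting map $\iota_*(\V/\V')[1]\to\iota_*(\V/\V')[1]$ in the $3\times 3$ diagram is \emph{zero} rather than an isomorphism, so that the cone actually splits. The paper does this (Theorem \ref{theorem 3}) by first using the $(0,\mathrm{id})$ characterization to locate the relevant maps in specific direct summands of Ext groups, then arguing that a putative nonzero connecting map would force two elements living in complementary summands to be equal, a contradiction. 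A purely cohomological count cannot distinguish $\Ee_{i,r+1}\oplus\Ee_{i,r+1}[1]$ from a nontrivial self-extension of $\Ee_{i,r+1}$.

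Your outline for (16) and the remaining axioms is essentially the paper's approach and should go through as you describe.
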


We devote the rest of this section to a proof of this theorem.

\subsection{The $\SL_2$ case of the main result}

In this subsection, we prove there is a categorical $\dot{\Uu}_{0,N}(L\SL_2)$ action first, which is the following theorem. 

\begin{theorem} \label{theorem 2} 
	The data above define a categorical $\dot{\Uu}_{0,N}(L\SL_2)$ action on $\bigoplus_{k}\Dd^b(\GG(k,N))$.
\end{theorem}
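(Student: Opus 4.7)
The plan is to verify the axioms of Definition \ref{definition 2} for the FM kernels of Definition \ref{definition 3}, specialized to $n=2$, using Propositions \ref{Proposition 1}, \ref{Proposition 2}, and \ref{proposition 3} as the main toolbox. Several axioms are immediate: the Grassmannians $\GG(k,N)$ are smooth projective so Hom spaces between FM kernels are finite dimensional; $\bo_{(k,N-k)}$ vanishes for $k<0$ or $k>N$; and since each $(\spi^{\pm})^{\pm 1}\bo_{(k,N-k)}$ is obtained from a line bundle supported on the diagonal, their pairwise commutations reduce to tensor products of line bundles on $\GG(k,N)$. The corresponding statements for $\Hhh_{\pm 1}$ follow by combining this observation with the defining formula of $\Hh_{\pm 1}\bo_{\kk}$ as a convolution involving $(\spi^{\pm})^{-1}$ and a two-step sheaf on the thickened diagonal.

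For the adjunctions in axioms (4) and (5) I would apply Proposition \ref{Proposition 1} to the closed immersion $\iota(\kk)\colon Fl(k-1,k) \hookrightarrow \GG(k,N) \times \GG(k-1,N)$. The relative dualizing sheaf of $Fl(k-1,k)$ inside this product is a product of powers of $\tdet\V$ on each Grassmannian times a power of $\V/\V'$; tracking this twist and comparing with the defining formulas for $\spi^{\pm}\bo_{\kk}$, which are themselves determinants of tautological bundles shifted homologically, produces precisely the conjugations and shifts predicted in (5).

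For the remaining axioms the main computational bulk is the convolution of kernels. For the E--E, E--$\spi$ and E--$\Hhh$ relations in axioms (9)(a), (11)(a), (13)(a), I would realize the relevant compositions as pushforwards of line bundles along the projection from the fibre product of two copies of $Fl(k-1,k)$ over $\GG(k-1,N)$. This fibre product is a $\PP^1$-bundle over a smaller flag variety, so Proposition \ref{proposition 3} computes each pushforward directly; the three cases $r-s\geq 1$, $r=s$, $r-s\leq -1$ in axiom (9)(a) correspond exactly to the positive, middle-vanishing, and shifted-dual ranges of that proposition. For the $\Hhh_{\pm 1}$--$\Ee_r$ triangles the thickened diagonal $2D_1\subset X_1(\kk)$ enters through the short exact sequences displayed in Section \ref{5.1}; pushing these forward and then convolving with $\Ee_r$ produces a two-step filtration whose graded pieces are $\Ee_{r+1}$ and $\Ee_{r+1}[1]$, yielding the stated triangle. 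The F--F, F--$\spi$, F--$\Hhh$ relations are formally dual via Proposition \ref{Proposition 1} and Remark \ref{remark 1}.

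The hardest step is axiom (15), the five cases relating $\Ee_r\Ff_s$ and $\Ff_s\Ee_r$. Both kernels live on $\GG(k,N)\times \GG(k,N)$, but $\Ff_s\ast\Ee_r$ is pushed forward from the fibre product parametrizing chains $V'\subset V\cap V''$ with $\tdim V'=k-1$, while $\Ee_r\ast\Ff_s$ is pushed forward from chains $V+V''\subset V'''$ with $\tdim V'''=k+1$. Both spaces map birationally onto $Y_1((k,N-k))$, so I would construct the canonical comparison morphism coming from the open stratum (where the two resolutions agree), and compute its cone as a pushforward from the closed stratum (the diagonal) using Proposition \ref{proposition 3}. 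The range $-k+1\leq r+s\leq N-k-1$ is precisely where all higher direct images along these boundary fibres vanish, yielding the isomorphism case; at each of the four extremal values of $r+s$ Proposition \ref{proposition 3} produces exactly one surviving cohomology group, which after the twist by $\tdet$ of the appropriate tautological bundle matches one of $\spi^+$, $\spi^+\Hhh_1$, $\spi^-$, or $\spi^-\Hhh_{-1}$. The bookkeeping of these twists, and the construction of canonical triangles (rather than merely an abstract isomorphism of objects) identifying the cone with the predicted kernel, is where the bulk of the work will concentrate.
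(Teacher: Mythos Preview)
Your overall strategy matches the paper's closely: both proceed by direct FM-kernel computations organized around Propositions~\ref{Proposition 1}--\ref{proposition 3}; both handle the $\Ee\Ee$ and $\Psi\Ee$ relations via the $\PP^1$-bundle $Fl(k-2,k-1,k)\to Fl(k-2,k)$; and both reduce the $\Ee\Ff$/$\Ff\Ee$ comparison (what you call axiom (15) is the paper's (16)) to a computation on a common fibre product $X=Z\times_Y Z'$ over the Schubert variety $Y_1$, where the cone is supported on the diagonal and Proposition~\ref{proposition 3} picks out the surviving cohomology at the extremal values of $r+s$. One simplification you do not mention: the paper first conjugates by powers of $\Psi^+$ to reduce to $r=0$, so only the single parameter $s$ remains.

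The genuine gap is your treatment of (13)(a). Obtaining a two-step filtration on the cone with graded pieces $\Ee_{r+1}$ and $\Ee_{r+1}[1]$ is not enough: the axiom demands that the cone be the \emph{direct sum} $\Ee_{r+1}\oplus\Ee_{r+1}[1]$, so you must show the connecting map in $\End(\Ee_{r+1})\cong\CC$ vanishes. This is not automatic, and the paper devotes substantial effort to it. First (Theorem~\ref{lemma 6}) it proves that $\Hh_1\bo_{(k,N-k)}$ sits in a triangle $\Delta_*\V\to\Hh_1\to\Delta_*(\CC^N/\V)$ determined by the specific extension class $(0,\mathrm{id})\in\Ext^1_\Delta(\CC^N/\V,\V)\oplus\End(\Omega_\Delta)$; this identification already requires a contradiction argument together with $\End(\Omega_{\GG(k,N)})\cong\CC$ (Lemma~\ref{lemma 7}). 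Only with this class in hand can one track it through convolution, construct the comparison map $\Hh_1\ast\Ee\to\Ee\ast\Hh_1$ (by showing a certain square commutes because both compositions factor through zero), and then argue---again by contradiction---that the connecting map on the cone vanishes. Your sketch, which appeals directly to the $\Oo_{2D}$ presentation of $\Hh_1$ and the short exact sequences of Section~\ref{5.1}, does not produce the comparison map, and does not address the splitting at all.
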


Now $n=2$, to simplify the notations further, we drop $i$ from all the functors with $i$ in their notation, and thus the 1-morphisms are $\bo_{(k,N-k)}$, ${\E}_{r}\bo_{(k,N-k)}$, ${\F}_{s}\bo_{(k,N-k)}$, ${\Hhh}_{\pm 1}\bo_{(k,N-k)}$, $({\spi}^{\pm})^{\pm 1}\bo_{(k,N-k)}$. Furthermore, for the weight $\kk$ we will just write $(k,N-k)$ with $0 \leq k \leq N$.

To prove this, we need to prove the conditions (1), (4), (5) (9a), (10a), (11a), (12a), (13a), (14a), (16) and (6), (7), (8) with $i=j$ in Definition \ref{definition 2}.

Note that condition (1) is obvious, since the varieties are finite Grassmannians $\GG(k,N)$, which are smooth and proper, the Hom spaces are finite dimensional. Before we check the rest, let us remark that since all functors above are defined by using kernels, we check the relations in terms of kernels by Proposition \ref{Proposition 2}.

The proofs we give below for all the conditions are only for the functors ${\E}_{r}\bo_{(k,N-k)}$ (since the proof for conditions that involve ${\F}_{s}\bo_{(k,N-k)}$ is similar). Also, we will only prove the first relation in conditions that involves many isomorphisms or exact triangles, e.g. we will only prove ${\spi}^{+}{\E}_{r}\bo_{(k,N-k)} \cong {\E}_{r+1}{\spi}^{+}\bo_{(k,N-k)}$ for condition (11)(a). Finally, in order to make calculations to be simple, we will omit $(k,N-k)$ for all the maps in Definition \ref{definition 3}, i.e. we will just write $\iota$, $^{\TTt}\iota$, $\Delta$, $t$, $p$ if there is no confusion.

It is helpful to keep the following picture
\[
\xymatrix{
	&& \GG(k,N) \times \GG(k-1,N) \ar[ldd]_{\pi_1} \ar[rdd]^{\pi_2}\\
	&& Fl(k-1,k)  \ar[ld]_{p_1} \ar[rd]^{p_2}  \ar[u]^{\iota}  \\
	& \GG(k,N)  
	&& \GG(k-1,N)  }
\] where $\iota:Fl(k-1,k) \rightarrow \GG(k,N) \times \GG(k-1,N)$ is the natural inclusion and $\pi_{1}$, $\pi_{2}$, $p_{1}$, $p_{2}$ are the natural projections.

The first is condition (5).

\begin{lemma} \label{lemma 2} (Condition (5)). ${\E}_{r}\bo_{(k,N-k)}$ and ${\F}_{s}\bo_{(k,N-k)}$ are left and right adjoints up to homological shifts and twists of $\spi^{\pm}\bo_{(k,N-k)}$:
	\begin{equation*}
	({\Ee}_{r}\bo_{(k,N-k)})_{R} \cong \bo_{(k,N-k)}{({\Psi}^{+})^{r+1}} \ast {\Ff}_{N-k+2} \ast ({\Psi}^{+})^{-r-2}[-r-1],
	\end{equation*}
\end{lemma}

\begin{proof}
	By Proposition \ref{Proposition 1}, the right adjoint of $\Ee_{r}\bo_{(k,N-k)}$ is given by
	\[
	\{\iota_{*}  (\V/\V')^{r}\}^{\vee} \otimes \pi_{1}^*(\omega_{\GG(k,N)})[\tdim\GG(k,N)].
	\] where
	\[
	\{\iota_{*}  (\V/\V')^{r}\}^{\vee} \cong \iota_{*} ((\V/\V')^{-r} \otimes \omega_{Fl(k-1,k)}) \otimes \omega_{\GG(k,N)\times \GG(k-1,N)}^{-1}[-\tcodim Fl(k-1,k)].
	\]
	
	To calculate $\omega_{Fl(k-1,k)}$, we have $\omega_{Fl(k-1,k)} \cong \omega_{p_{2}} \otimes p^*_{2}\omega_{\GG(k-1,N)}$ where $\omega_{p_{2}}$ is the relative canonical bundle. Since the relative cotangent bundle is $\Tt^{\vee}_{p_{2}}\cong \V/\V' \otimes (\CC^N/\V)^{\vee}$, we obtain $\omega_{p_{2}} \cong (\V/\V')^{N-k} \otimes \det(\CC^N/\V)^{-1}$. A calculation gives $\tdim\GG(k,N)-\tcodim Fl(k-1,k)=N-k$.
	
	So summarizing above and use $\V/\V' \cong \det(\CC^N/\V') \otimes \det(\CC^N/\V)^{-1}$ we have 
	\begin{align*} 
	&\{\iota_{*}  (\V/\V')^{r}\}^{\vee} \otimes \pi_{1}^*(\omega_{\GG(k,N)})[\tdim\GG(k,N)] 
	\cong \iota_{*}(\omega_{rel}\otimes (\V/\V')^{-r} ) [N-k] \ \rm{(using \ projection \ formula)} \\
	& \cong \iota_{*} ((\V/\V')^{-r+N-k+1} \otimes \det(\CC^N/\V')^{-1})[N-k]. \\
	& \cong \iota_{*} ((\V/\V')^{N-k+2} \otimes \det(\CC^N/\V)^{r+1} \otimes \det(\CC^N/\V')^{-r-2})[N-k] \\
	& \cong \iota_{*} ((\V/\V')^{N-k+2} \otimes \det(\CC^N/\V)^{r+1}[(r+1)(1+k-N)] \otimes \det(\CC^N/\V')^{-r-2}[(r+2)(N-k)])[-r-1] 
	\end{align*}
	
	Note that the kernel $(\Psi^{+})^{-1}\bo_{(k-1,N-k+1)}$ is defined by  $\Delta_{*}\det(\CC^N/\V')^{-1}[N-k]$, while $\Psi^{+}\bo_{(k,N-k)}$ is defined by  $\Delta_{*}\det(\CC^N/\V)[1+k-N]$. We conclude that it is isomorphic to $\bo_{(k,N-k)}{({\Psi}^{+})^{r+1}}\ast {\Ff}_{N-k+2}\ast ({\Psi}^{+})^{-r-2}[-r-1]$.
\end{proof}

We decide to skip the proofs for conditions (9a), (10a), (11a), (12a) for the reason that the argument is fairly standard by using base change, projection formula, and Proposition \ref{proposition 3}. The readers can consult the proof of Lemma \ref{lemma 5} for a rough argument.

\begin{lemma} \label{lemma 4} (Conditions (9a), (10a)).
	\begin{equation*}
	(\Ee_{r+1}\ast \Ee_{s})\bo_{(k,N-k)} \cong 
	(\Ee_{s+1}\ast \Ee_{r})\bo_{(k,N-k)}[-1] \ \text{if} \ r-s \geq 1 
    \end{equation*}
\end{lemma}

\begin{lemma} \label{lemma 3} (Conditions (11a), (12a)). 
\begin{equation*}
	(\Psi^{+} \ast \Ee_{r})\bo_{(k,N-k)} \cong (\Ee_{r+1}\ast \Psi^{+})\bo_{(k,N-k)}[-1]
	\end{equation*}
\end{lemma}

Then we prove condition (16). Observe that from Lemma \ref{lemma 3}, we have $(\Psi^{+} \ast \Ee_{r})\bo_{(k,N-k)} \cong (\Ee_{r+1}\ast \Psi^{+})\bo_{(k,N-k)}[-1]$. Since $\Psi^{+}\bo_{(k,N-k)}$ is invertible, we get 
\begin{equation*} 
[\Psi^{+}\ast \Ee_{r}\ast (\Psi^{+})^{-1} ]\bo_{(k,N-k)} \cong \Ee_{r+1}\bo_{(k,N-k)}[-1],
\end{equation*} and apply this inductively we obtain
\begin{equation} \label{eq a1}
[(\Psi^{+})^r \ast \Ee_{0} \ast (\Psi^{+})^{-r}] \bo_{(k,N-k)} \cong \Ee_{r}\bo_{(k,N-k)}[-r]
\end{equation} where $(\Psi^{+})^r$ means $(\Psi^{+})$ convolution with itself  $r$ times. 

Similarly, for $\Ff_{s}\bo_{(k,N-k)}$, we have 
\begin{equation} \label{eq a2}
[(\Psi^{+})^{-s} \ast \Ff_{0} \ast (\Psi^{+})^{s}] \bo_{(k,N-k)} \cong \Ff_{s}\bo_{(k,N-k)}[-s].
\end{equation}

By (\ref{eq a1}) and (\ref{eq a2}), we obtain that 
\begin{align*}
& (\Ee_{r}\ast \Ff_{s})\bo_{(k,N-k)} \cong  [(\Psi^{+})^r \ast \Ee_{0} \ast \Ff_{s+r} \ast (\Psi^{+})^{-r} ]\bo_{(k,N-k)}, \\
& (\Ff_{s}\ast \Ee_{r})\bo_{(k,N-k)}
\cong  [(\Psi^{+})^r \ast \Ff_{r+s} \ast \Ee_{0} \ast (\Psi^{+})^{-r} ]\bo_{(k,N-k)}.
\end{align*}

Since $\Psi^{+}\bo_{(k,N-k)}$ is invertible, in order to compare $( \Ff_{s}\ast \Ee_{r})\bo_{(k,N-k)}$ and $(\Ee_{r}\ast \Ff_{s})\bo_{(k,N-k)}$, it suffices to compare $(\Ff_{r+s} \ast \Ee_{0})\bo_{(k,N-k)}$ and $ (\Ee_{0} \ast \Ff_{r+s})\bo_{(k,N-k)}$. Hence, we prove the following proposition.

\begin{proposition} \label{proposition 5}
	We have the following (non-split) exact triangles in $\Dd^b(\GG(k,N) \times \GG(k,N))$.
	\begin{align*}
	&(\Ff_{N-k+1}\ast \Ee_{0})\bo_{(k,N-k)} \rightarrow (\Ee_{0}\ast \Ff_{N-k+1})\bo_{(k,N-k)} \rightarrow (\Psi^{+} \ast \Hh_{1})\bo_{(k,N-k)}, \\
	&(\Ff_{N-k}\ast \Ee_{0})\bo_{(k,N-k)} \rightarrow (\Ee_{0}\ast \Ff_{N-k})\bo_{(k,N-k)} \rightarrow \Psi^{+}\bo_{(k,N-k)}.
	\end{align*}
Finally, we have 
\begin{equation*}
(\Ff_{s}\ast \Ee_{0})\bo_{(k,N-k)} \cong (\Ee_{0} \ast \Ff_{s})\bo_{(k,N-k)} \ \text{if} \ 0 \leq s \leq N-k-1.
\end{equation*}
\end{proposition}

The first thing we deal with is when $s=0$, which is the following lemma.

\begin{lemma} \label{lemma 5}
	$(\Ee_{0} \ast \Ff_{0})\bo_{(k,N-k)} \cong (\Ff_{0} \ast \Ee_{0})\bo_{(k,N-k)}$ 
\end{lemma}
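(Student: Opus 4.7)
The plan is to identify both $(\Ff \ast \Ee)\bo_{(k,N-k)}$ and $(\Ee \ast \Ff)\bo_{(k,N-k)}$ with a common pushforward from the variety $X = X_{1}((k,N-k))$ of Section \ref{5.1}.

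First, applying the base-change argument from the proof of Lemma \ref{lemma 4} twice (together with a transverse-intersection check by dimension count, to replace the derived tensor product by the underived one), we rewrite
\[
(\Ff \ast \Ee)\bo_{(k,N-k)} \cong Rg_{1*} \Oo_{Z_1}, \qquad (\Ee \ast \Ff)\bo_{(k,N-k)} \cong Rg_{2*} \Oo_{Z_2},
\]
where
\[
Z_1 = \{(V, V', V'') : V' \subset V \cap V'' \subset \CC^N,\ \tdim V = \tdim V'' = k,\ \tdim V' = k-1\},
\]
\[
Z_2 = \{(V, V''', V'') : V + V'' \subset V''' \subset \CC^N,\ \tdim V = \tdim V'' = k,\ \tdim V''' = k+1\},
\]
and $g_1, g_2$ project to the outer two factors.

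Next, $X = X_1((k,N-k))$ carries natural morphisms $q_1 : X \to Z_1$ (forgetting $V'$) and $q_2 : X \to Z_2$ (forgetting $V'''$). Both targets are smooth: $Z_1 = Fl(k-1,k) \times_{\GG(k-1,N)} Fl(k-1,k)$ is a fibre product of two $\PP^{N-k}$-bundles and $Z_2 = Fl(k,k+1) \times_{\GG(k+1,N)} Fl(k,k+1)$ is a fibre product of two $\PP^{k}$-bundles. The locus $\{V = V''\}$ inside $Z_1$ (resp. $Z_2$) is a smooth subvariety, isomorphic to $Fl(k-1,k)$ (resp. $Fl(k,k+1)$), of codimension $N-k$ (resp. $k$); a direct normal-bundle calculation using the tangent sequence together with matching the fibres $\PP^{N-k-1}$ (resp. $\PP^{k-1}$) of the divisor $D_1((k,N-k)) \subset X$ over this locus identifies $q_1$ (resp. $q_2$) as the blow-up along this smooth subvariety, with exceptional divisor precisely $D_1((k,N-k))$.

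Finally, the blow-up pushforward formula yields $Rq_{1*}\Oo_X \cong \Oo_{Z_1}$ and $Rq_{2*}\Oo_X \cong \Oo_{Z_2}$. Since $g_1 \circ q_1 = g_2 \circ q_2 : X \to \GG(k,N)^2$ both send $(V''', V, V'', V')$ to $(V, V'')$, we obtain
\[
(\Ff \ast \Ee)\bo_{(k,N-k)} \cong R(g_1 \circ q_1)_* \Oo_X \cong R(g_2 \circ q_2)_* \Oo_X \cong (\Ee \ast \Ff)\bo_{(k,N-k)}.
\]
The main technical step is identifying $q_1, q_2$ as honest blow-ups of smooth subvarieties in smooth ambient spaces (rather than merely proper birational morphisms with the correct fibre dimensions over the exceptional locus); this requires a careful normal-bundle and universal-property argument on the explicit varieties from Section \ref{5.1}. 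The boundary cases $k=1$ or $N-k=1$ are automatic, since one of $q_1, q_2$ is then an isomorphism (a blow-up along a codimension-one subvariety).
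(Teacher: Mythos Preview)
Your argument is correct but follows a different route from the paper's. The paper computes both convolutions directly and pushes each of $Z$ and $Z'$ \emph{down} to their common image $Y=\{(V,V''):\dim V\cap V''\ge k-1\}$, invoking the classical fact that this Schubert variety has rational singularities, so that both kernels equal $t_*\Oo_Y$. You instead lift \emph{up} to the common roof $X$ and use $Rq_{i*}\Oo_X\cong\Oo_{Z_i}$; both kernels then equal $Rp_*\Oo_X$. Your approach trades the Schubert rational-singularities input for an analysis of the maps $q_i$, which is arguably more self-contained; in fact the paper later (in the proof of Proposition~\ref{proposition 5}) establishes $g_{i*}g_i^*\cong id$ by base change from its Lemma~\ref{lemma 5} argument, so the two approaches converge there.

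One simplification: you do not need the full blow-up identification. Since $X$ is smooth (it is a $\PP^1\times\PP^1$-bundle over $Fl(k-1,k+1)$: fix $V'''\subset V'$ and let $V,V''$ range over lines in the rank-two bundle $V'/V'''$) and $Z_1,Z_2$ are smooth, and $q_1,q_2$ are proper birational, the vanishing $R^{>0}q_{i*}\Oo_X=0$ follows immediately from the fact that smooth varieties have rational singularities. So what you flag as the ``main technical step'' can be replaced by the one-line smoothness check for $X$.

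A minor notational point: your $V'$ in $Z_1$ and $V'''$ in $Z_2$ are swapped relative to the paper's convention for $X$ in Section~\ref{5.1} (there $\dim V'''=k-1$ and $\dim V'=k+1$), so ``forgetting $V'$'' and ``forgetting $V'''$'' read ambiguously. It would be clearer to say $q_1$ forgets the $(k+1)$-dimensional flag member and $q_2$ the $(k-1)$-dimensional one.
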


\begin{proof}
	By definition
	\begin{equation}
	(\Ee_{0} \ast \Ff_{0})\bo_{(k,N-k)} \cong
	\pi_{13*}(\pi^*_{12} {^{\TTt}}\iota_{*}\Oo_{Fl(k,k+1)} \otimes \pi_{23}^*\iota_{*}\Oo_{Fl(k,k+1)}). \label{5.2.14}
	\end{equation} 
	
	We will keep using fibred product diagram, base change and projection formula to calculate (\ref{5.2.14}). Since the argument is pretty standard, we decide to omit the details and only mention the key steps in order to make the article short.

	We have the following fibred product diagrams 
	\begin{equation*} 
	\begin{tikzcd} 
	Fl(k,k+1) \times \GG(k,N)   \arrow[r, "^{\TTt}\iota \times id"]  \arrow[d, "a_{1}"] &\GG(k,N)\times \GG(k+1,N) \times \GG(k,N) \arrow[d, "\pi_{12}"]\\
	Fl(k,k+1)  \arrow[r, "^{\TTt}\iota"] & \GG(k,N)\times \GG(k+1,N)
	\end{tikzcd}
	\end{equation*}
	\begin{equation*} 
	\begin{tikzcd} 
	\GG(k,N) \times Fl(k,k+1)   \arrow[r, "id \times \iota"]  \arrow[d, "a_{2}"] &\GG(k,N)\times \GG(k+1,N) \times \GG(k,N) \arrow[d, "\pi_{23}"]\\
	Fl(k,k+1)  \arrow[r, "\iota"] & \GG(k+1,N)\times \GG(k,N)
	\end{tikzcd}
	\end{equation*} where $a_{1}$, $a_{2}$ are the natural projections. Then we obtain  
	\begin{equation}
	(\ref{5.2.14}) 
	\cong \pi_{13*}((^{\TTt}\iota \times id)_{*}\Oo_{Fl(k,k+1) \times \GG(k,N)} \otimes (id\times \iota)_{*}\Oo_{\GG(k,N) \times Fl(k,k+1)}). \label{5.2.16}
	\end{equation}
	
	Next, the following fibred product diagram 
	\begin{equation*} 
	\begin{tikzcd} 
	Z   \arrow[r, "b_{1}"]  \arrow[d, "b_{2}"] &Fl(k,k+1) \times \GG(k,N) \arrow[d, "^{\TTt}\iota \times id"]\\
	\GG(k,N) \times Fl(k,k+1)  \arrow[r, "id \times \iota"] & \GG(k,N) \times \GG(k+1,N)\times \GG(k,N)
	\end{tikzcd}
	\end{equation*} where $Z$ is the following variety
	\begin{equation*}
	Z \coloneqq \{ (V,V',V'') \ | \ \dim V=\dim V''=k,\ \dim V'=k+1,\ V \subset V',\ V'' \subset V' \}
	\end{equation*} gives us that
	\begin{equation} \label{eq cdd}
	(\ref{5.2.16}) \cong  \pi_{13*}(id\times \iota)_{*}b_{2*}(\Oo_{Z}).
	\end{equation}
	
	Finally, we have the following fibred product diagram
	\begin{equation*} 
	\begin{tikzcd} 
	Z   \arrow[r, "j_{1}"]  \arrow[d, "\pi_{13}|_{Z}"] &\GG(k,N) \times \GG(k+1,N) \times \GG(k,N) \arrow[d, "\pi_{13}"]\\
	Y  \arrow[r, "t"] & \GG(k,N) \times  \GG(k,N)
	\end{tikzcd}
	\end{equation*} where $Y=\pi_{13}(Z)=\{(V,V'')\ |\ \tdim V\cap V'' \geq k-1 \}$, and $j_1=(id \times \iota) \circ b_2$, $t$ are the inclusions. 
	
	Note that $Y \subset \GG(k,N) \times \GG(k,N)$ is a Schubert variety, it has rational singularities. So $(\pi_{13}|_{Z*})(\Oo_{Z}) \cong \Oo_{Y}$ and thus
	\begin{equation*}
	(\Ee_{0} \ast \Ff_{0})\bo_{(k,N-k)} \cong (\ref{eq cdd}) \cong t_{*}(\pi_{13}|_{Z*})(\Oo_{Z}) \cong t_{*}\Oo_{Y}.
	\end{equation*} 
	
	Using the same method for calculating $(\Ff_{0} \ast \Ee_{0})\bo_{(k,N-k)}$, we end up with the following fibred product diagram
	\begin{equation*} 
	\begin{tikzcd} 
	Z'   \arrow[r, "j_{2}"]  \arrow[d, "\pi_{13'}|_{Z'}"] &\GG(k,N) \times \GG(k-1,N) \times \GG(k,N) \arrow[d, "\pi_{13'}"]\\
	Y  \arrow[r, "t"] & \GG(k,N) \times  \GG(k,N)
	\end{tikzcd}
	\end{equation*}  where 
	\begin{equation*}
	Z'\coloneqq \{(V,V''',V'') \ | \ \dim V=\dim V''=k, \ \dim V'''=k-1, \  V''' \subset V,\ V''' \subset  V''\}
	\end{equation*} and $j_{2}$ is the inclusion.

	Again, we have $\pi_{13}(Z')=\{(V,V'') \ | \ \dim V\cap V'' \geq k-1\}=Y$, which is the same as we get when we calculate $(\Ee_{0} \ast \Ff_{0})\bo_{(k,N-k)}$. Thus 
	\begin{equation*}
    (\Ff_{0} \ast \Ee_{0})\bo_{(k,N-k)} \cong	\pi_{13'*}j_{2*}(\Oo_{Z'}) \cong t_{*}(\pi_{13'}|_{Z'*})(\Oo_{Z'}) \cong t_{*}\Oo_{Y}
	\end{equation*} which prove the lemma.
\end{proof}

Now we move to the case where $s$ is nonzero. To compare $(\Ee_{0} \ast \Ff_{s})\bo_{(k,N-k)}$ and $(\Ff_{s} \ast \Ee_{0})\bo_{(k,N-k)}$, by using the above fibred product diagrams, we have to compare the following two objects
\[
\pi_{13*}(j_{1*} (\V'/\V)^{s} ) \cong t_{*}(\pi_{13}|_{Z*})(\V'/\V)^{s} \
\text{and} \ 
\pi_{13'*}(j_{2*} (\V''/\V''')^{s} ) \cong t_{*}(\pi_{13'}|_{Z'*})(\V''/\V''')^{s} 
\] in the derived category $\Dd^b(\GG(k,N)\times \GG(k,N))$.

Note that both are pushforwards to $Y$. In order to handle the case where we tensor non-trivial line bundles, instead of directly pushing forward to $Y$, we lift the line bundles to a much larger space, i.e. their fibred product. The fibred product space $X:=Z \times_{Y} Z'$, is  given by
\begin{equation*}
X=\{(V''',V,V'',V')\ | \ \tdim(V \cap V'') \geq k-1, \ \ V''' \subset V \subset V', \ V''' \subset V''\subset V' \}.
\end{equation*}

We have the following fibred product diagram
\begin{equation} \label{diagram1}
\begin{tikzcd} [column sep=large]
X=Z \times_{Y} Z' \arrow[r, "g_{1}"]  \arrow[d, "g_{2}"] & Z\arrow[d, "\pi_{13}|_{Z}"]\\
Z' \arrow[r, "\pi_{13'}|_{Z'}"] &Y
\end{tikzcd}
\end{equation} where $g_1$ and $g_2$ are the natural projections. We denote $p:X \rightarrow Y$ to be the natural projection.

On $X$, we have the divisor $D = Fl(k-1,k,k+1)=\{(V''',V,V') \ | \ V''' \subset V \subset V'\}$ which is the locus where $V=V''$ and it is the vanishing locus of the natural sections $\V''/\V''' \rightarrow \V'/\V$. Thus we have the following short exact sequence
\begin{equation*}
0 \rightarrow \V''/\V''' \rightarrow \V'/\V \rightarrow \Oo_{D} \otimes \V'/\V \rightarrow 0.
\end{equation*}

We can relate it to Proposition \ref{proposition 3}, more precisely, it is easy to see that the restriction of the line bundle $\V'/\V$ to $D$ is the pullback of the relative tautological bundle $\Oo_{\PP(\CC^N/\V)}(-1)$ on $\PP(\CC^N/\V)=Fl(k,k+1) \subset Z$. Thus we obtain
\begin{equation} \label{ses1}
0 \rightarrow \V''/\V''' \rightarrow \V'/\V \rightarrow \Oo_{D} \otimes \Oo_{\PP(\CC^N/\V)}(-1)  \rightarrow 0,
\end{equation}

We will use the above short exact sequence to help us prove Proposition \ref{proposition 5}.

\begin{proof}[Proof of Proposition \ref{proposition 5}]
	
	We already settled the case $s=0$. For $1 \leq s \leq N-k+1$ , the main thing we have to do is compare/relate $(\Ee_{0} \ast \Ff_{s})\bo_{(k,N-k)}$ and $(\Ff_{s} \ast \Ee_{0})\bo_{(k,N-k)}$. Note that $(\pi_{13}|_{Z*})(\Oo_{Z}) \cong \Oo_{Y}$. This implies that $(\pi_{13}|_{Z*})(\pi_{13}|_{Z})^{*} \cong id_{Y}$. Using the fibred product Diagram (\ref{diagram1}), since $g_2$ is the base change of $\pi_{13}|_{Z}$, we have $g_{2*}g_{2}^{*} \cong id_{Z'}$. Similarly, $(\pi_{13'}|_{Z'*})(\Oo_{Z'}) \cong \Oo_{Y}$ implies that $g_{1*}g_{1}^{*} \cong id_{Z}$.
	 
	So from the above discussion
	\begin{equation*}
	(\Ee_{0} \ast \Ff_{s})\bo_{(k,N-k)} \cong t_{*}(\pi_{13}|_{Z*})(\V'/\V)^{s} \cong t_{*}(\pi_{13}|_{Z*})g_{1*}g_{1}^{*}(\V'/\V)^{s} \cong t_{*}p_{*}(\V'/\V)^{s},
	\end{equation*} and similarly
	\begin{equation*}
	(\Ff_{s} \ast \Ee_{0})\bo_{(k,N-k)} \cong
	t_{*}(\pi_{13'}|_{Z'*})(\V''/\V''')^{s} \cong t_{*}(\pi_{13'}|_{Z'*})g_{2*}g_{2}^{*} (\V''/\V''')^{s} \cong t_{*}p_{*}(\V''/\V''')^{s}.
	\end{equation*}
	
	Thus, at first, we have to compare $p_{*}(\V'/\V)^{s}$ and $p_{*} (\V''/\V''')^{s}$ in $\Dd^b(Y)$.
	
	Note that for each $n \geq 1$, we have the following short exact sequence on $X$
	\begin{equation} \label{ses8} 
	0 \rightarrow (\V''/\V''')^{n} \rightarrow (\V'/\V)^{n} \rightarrow \Oo_{nD} \otimes (\V'/\V)^{n} \rightarrow 0.
	\end{equation}
	
	Tensoring (\ref{ses1}) by $(\V''/\V''')^{n-1}$, we get 
	\[
	0 \rightarrow (\V''/\V''')^{n} \rightarrow \V'/\V \otimes (\V''/\V''')^{n-1}  \rightarrow \Oo_{D} \otimes \V'/\V \otimes (\V/\V''')^{n-1} \rightarrow 0 .
	\]
	
	Both of them are exact triangles in $\Dd^b(X)$, and they can be completed together to form the following diagram of morphisms between exact triangles	
	\[
	\xymatrix{
		(\V''/\V''')^{n} \ar[d]^{id} \ar[r] &\V'/\V \otimes (\V''/\V''')^{n-1}  \ar[d] \ar[r] &\Oo_{D} \otimes \V'/\V \otimes (\V/\V''')^{n-1} \ar[d] \\
		(\V''/\V''')^{n}  \ar[d] \ar[r] &(\V'/\V)^{n} \ar[d] \ar[r] &\Oo_{nD} \otimes (\V'/\V)^{n} \ar[d] \\
		0 \ar[r] &\Oo_{(n-1)D} \otimes  (\V'/\V)^{n} \ar[r] &\Oo_{(n-1)D} \otimes  (\V'/\V)^{n}
	}
	\]
		
	So we obtain the exact triangle 
	\begin{equation} \label{ses9}
	\Oo_{D} \otimes \V'/\V \otimes (\V/\V''')^{n-1} \rightarrow \Oo_{nD} \otimes (\V'/\V)^{n} \rightarrow \Oo_{(n-1)D} \otimes  (\V'/\V)^{n} 
	\end{equation} for all $n \geq 1$ (Here we take $\Oo_{(n-1)D}$ to be $0$ when $n=1$). 
	
	We will use the exact triangles (\ref{ses8}) and (\ref{ses9}) to prove our result.
	
	The first case is $1 \leq s \leq N-k-1$. For (\ref{ses8}) with $n=s$ we have 
	\[
	0 \rightarrow (\V''/\V''')^{s} \rightarrow (\V'/\V)^{s} \rightarrow \Oo_{sD} \otimes (\V'/\V)^{s} \rightarrow 0.
	\]
	
	Applying the derived pushforward $p_{*}$, we obtain 
	\[
	p_{*}(\V''/\V''')^{s} \rightarrow p_{*}(\V'/\V)^{s} \rightarrow p_{*}(\Oo_{sD} \otimes (\V'/\V)^{s}).
	\]
	
	It suffices to prove that $p_{*}(\Oo_{sD} \otimes (\V'/\V)^{s}) \cong 0$ so that we can obtain  $(\Ee_{0} \ast \Ff_{s})\bo_{(k,N-k)} \cong (\Ff_{s} \ast \Ee_{0})\bo_{(k,N-k)}$ after applying $t_{*}$.
	
	Using (\ref{ses9}) with $n=s$, we have 
	\begin{equation*}
	\Oo_{D} \otimes \V'/\V \otimes (\V/\V''')^{s-1} \rightarrow \Oo_{sD} \otimes (\V'/\V)^{s} \rightarrow \Oo_{(s-1)D} \otimes  (\V'/\V)^{s}.
	\end{equation*}
	
	Applying the derived pushforward $p_{*}$ to calculate $p_{*}(\Oo_{D} \otimes \V'/\V \otimes (\V/\V''')^{s-1} )$, we using the projection formula. 
	\begin{align*}
	p_{*}(\Oo_{D} \otimes \V'/\V \otimes (\V/\V''')^{s-1}) 
	& \cong p_{*}(\Oo_{D} \otimes \Oo_{\PP(\CC^N/\V)}(-1) \otimes \Oo_{\PP(\V^{\vee})}(s-1)) \\ 
	&\cong \pi_{13*}p_{1*}(\Oo_{D} \otimes p_{1}^*(\Oo_{\PP(\CC^N/\V)}(-1)) \otimes \Oo_{\PP(\V^{\vee})}(s-1)) \\
	&\cong \pi_{13*}(\Oo_{Fl(k,k+1)} \otimes \Oo_{\PP(\CC^N/\V)}(-1) \otimes \Sym^{s-1}(\V)) \\
	&\cong \pi_{13*}(\Oo_{Fl(k,k+1)} \otimes \Oo_{\PP(\CC^N/\V)}(-1) \otimes \pi_{13}^*(\Sym^{s-1}(\V))) \\
	&\cong  \Sym^{s-1}(\V) \otimes \pi_{13*}(\Oo_{\PP(\CC^N/\V)}(-1)) \cong 0.
	\end{align*}
	
	Thus we get $p_{*}(\Oo_{sD} \otimes (\V'/\V)^{s}) \cong p_{*}(\Oo_{(s-1)D} \otimes  (\V'/\V)^{s})$. Next, consider the exact triangle (\ref{ses9}) with $n=s-1$, we have 
	\begin{equation*}
	\Oo_{D} \otimes \V'/\V \otimes (\V/\V''')^{s-2} \rightarrow \Oo_{(s-1)D} \otimes (\V'/\V)^{s-1} \rightarrow \Oo_{(s-2)D} \otimes  (\V'/\V)^{s-1}.
	\end{equation*}
	
	Tensoring by $\V'/\V$ and then applying $p_{*}$, using the same argument as above, we obtain $p_{*}(\Oo_{(s-1)D} \otimes (\V'/\V)^{s}) \cong p_{*}(\Oo_{(s-2)D} \otimes  (\V'/\V)^{s})$. Continuing this procedure, we will end up with 
	\begin{equation} \label{eq iso} 
	p_{*}(\Oo_{sD} \otimes (\V'/\V)^{s}) \cong ... \cong p_{*}(\Oo_{2D} \otimes (\V'/\V)^{s}).
	\end{equation}
	
	For the exact triangle (\ref{ses9}) with $n=2$, tensoring by $(\V'/\V)^{s-2}$, we get 
	\[
	\Oo_{D} \otimes (\V'/\V)^{s-1} \otimes \V/\V''' \rightarrow \Oo_{2D} \otimes (\V'/\V)^{s} \rightarrow \Oo_{D} \otimes  (\V'/\V)^{s}.
	\] Applying $p_{*}$, then we get $p_{*}(\Oo_{D} \otimes (\V'/\V)^{s-1} \otimes \V/\V''') \cong 0$ and $p_{*}(\Oo_{D} \otimes  (\V'/\V)^{s}) \cong 0$. The first isomorphism is via using projection formula and Proposition \ref{proposition 3} with $1 \leq s \leq N-k-1$, while the second one is via Proposition \ref{proposition 3} with $1 \leq s \leq N-k-1$.
	
	Hence we get $p_{*}(\Oo_{2D} \otimes (\V'/\V)^{s}) \cong 0$ and (\ref{eq iso}) implies that $p_{*}(\Oo_{sD} \otimes (\V'/\V)^{s}) \cong 0$, so we prove the first case.
	
	The next case is $s=N-k$. Applying $p_{*}$ to (\ref{ses8}) with $n=N-k$, we have 
	\[
	p_{*} (\V''/\V''')^{N-k} \rightarrow p_{*}(\V'/\V)^{N-k} \rightarrow p_{*}(\Oo_{(N-k)D} \otimes (\V'/\V)^{N-k}). 
	\]
	
	It suffices to prove $p_{*}(\Oo_{(N-k)D} \otimes (\V'/\V)^{N-k}) \cong j_{*}\det(\CC^N/\V)[1+k-N]$, where $j:\Delta=\GG(k,N) \rightarrow Y$ is the natural inclusion. Note that we have the inclusion $t:Y \rightarrow \GG(k,N) \times \GG(k,N)$, and so $\Delta=t \circ j$.
	
	Again, using the similar argument as in the proof of the case $1 \leq s \leq N-k-1$, we also have 
	\[
	p_{*}(\Oo_{(N-k)D} \otimes (\V'/\V)^{N-k}) \cong ... \cong p_{*}(\Oo_{2D} \otimes (\V'/\V)^{N-k}).
	\]
	
	Considering (\ref{ses9}) with $n=2$ and tensoring it with $(\V'/\V)^{N-k-2}$, we get 
	\[
	\Oo_{D} \otimes (\V'/\V)^{N-k-1} \otimes \V/\V''' \rightarrow \Oo_{2D} \otimes (\V'/\V)^{N-k} \rightarrow \Oo_{D} \otimes  (\V'/\V)^{N-k},
	\] applying $p_{*}$, we obtain $p_{*}(\Oo_{D} \otimes (\V'/\V)^{N-k-1} \otimes \V/\V''') \cong 0$ via projection formula, while $p_{*}(\Oo_{D} \otimes  (\V'/\V)^{N-k}) \cong j_{*}\det(\CC^N/\V)[1+k-N]$ is by proposition \ref{proposition 3}.
	
	Hence we get 
	\begin{align*}
	 p_{*}(\Oo_{(N-k)D} \otimes (\V'/\V)^{N-k}) &\cong p_{*}(\Oo_{2D} \otimes (\V'/\V)^{N-k}) \\
	 &\cong p_{*}(\Oo_{D} \otimes  (\V'/\V)^{N-k}) \cong j_{*}\det(\CC^N/\V)[1+k-N]
	\end{align*} and $t_{*}p_{*}(\Oo_{(N-k)D} \otimes (\V'/\V)^{N-k}) \cong \Delta_{*}\det(\CC^N/\V)[1+k-N]$, which prove the second case.
	
	Finally, for the third case $s=N-k+1$, we have the following exact triangle in $\Dd^b(Y)$
	\begin{equation} \label{ses10}
	p_{*} (\V''/\V''')^{N-k+1} \rightarrow p_{*}(\V'/\V)^{N-k+1} \rightarrow p_{*}(\Oo_{(N-k+1)D} \otimes (\V'/\V)^{N-k+1})
	\end{equation} by applying $p_{*}$ to (\ref{ses8}) with $n=N-k+1$.
	
	Using the same argument, we still have the following 
	\[
	p_{*}(\Oo_{(N-k+1)D} \otimes (\V'/\V)^{N-k+1}) \cong ... \cong p_{*}(\Oo_{2D} \otimes (\V'/\V)^{N-k+1}).
	\]
	
	So after applying $t_{*}$ to (\ref{ses10}), we get the following exact triangle
	\[
	 (\Ff_{N-k+1} \ast \Ee_{0})\bo_{(k,N-k)} \rightarrow  (\Ee_{0} \ast \Ff_{N-k+1})\bo_{(k,N-k)} \rightarrow t_{*}p_{*}(\Oo_{2D} \otimes (\V'/\V)^{N-k+1})
	\] and $t_{*}p_{*}(\Oo_{2D} \otimes (\V'/\V)^{N-k+1})$ is exactly the kernel $(\Psi^{+} \ast \Hh_{1})\bo_{(k,N-k)}$.
	
	Hence we prove the third case and complete the proof of this proposition.
\end{proof}

\begin{remark} \label{nonsplit}
In this remark, we show that the exact triangles in proposition \ref{proposition 5} are non-split. We give a proof for the second exact triangle, and the other is similar. We prove this via showing that the Hom spaces of morphisms between objects are zero. Indeed, using adjunction (condition (5)) we have 
\begin{align*}
&\Hom( (\Ee_{0} \ast \Ff_{N-k})\bo_{(k,N-k)}, (\Ff_{N-k} \ast \Ee_{0})\bo_{(k,N-k)})  \\
&\cong \Hom( (\Ff_{N-k})_{L} \ast \Ee_{0}\bo_{(k-1,N-k+1)},   \Ee_{0} \ast (\Ff_{N-k})_{L}\bo_{(k-1,N-k+1)}) \\
& \cong \Hom( \Ee_{0} \ast (\Psi^{+})^{-1} \ast \Ee_{0}\bo_{(k-1,N-k+1)}[1],   \Ee_{0} \ast (\Psi^{+})^{-1} \Ee_{0} \bo_{(k-1,N-k+1)}) \\
& \cong \Hom( \Ee_{0} \ast  \Ee_{-1} \ast (\Psi^{+})^{-1} \bo_{(k-1,N-k+1)}[2],   \Ee_{0} \ast  \Ee_{-1} \ast (\Psi^{+})^{-1} \bo_{(k-1,N-k+1)}[1])  \\
& \cong \Hom(0,0) \cong 0. \ (\text{using condition 9(a)})
\end{align*} 
\end{remark}

Next, we want to understand more about the new kernel $\Hh_{1}$, which is produced under the process of comparing $(\Ff_{N-k+1} \ast \Ee_{0})\bo_{(k,N-k)}$ and $(\Ee_{0} \ast \Ff_{N-k+1})\bo_{(k,N-k)}$. Note that tensoring (\ref{ses9}) with $n=2$ by $(\V'/\V)^{N-k-1}$, we get 
\[
\Oo_{D} \otimes (\V'/\V)^{N-k} \otimes \V/\V''' \rightarrow \Oo_{2D} \otimes (\V'/\V)^{N-k+1} \rightarrow \Oo_{D} \otimes  (\V'/\V)^{N-k+1}.
\]
	
Applying $t_{*}p_{*}$, using projection formula and Proposition \ref{proposition 3}, we get the following exact triangle
\begin{equation} \label{ses11}
\Delta_{*} \V \otimes \det(\CC^N/\V)[1+k-N] \rightarrow t_{*}p_{*}(\Oo_{2D} \otimes (\V'/\V)^{N-k+1} ) \rightarrow  \Delta_{*} \CC^N/\V \otimes \det(\CC^N/\V)[1+k-N].
\end{equation}

Taking convolution of the exact triangle (\ref{ses11})  with $(\Psi^{+}\bo_{(k,N-k)})^{-1}$, we get the following exact triangle
\[
\Delta_{*} \V \rightarrow ((\Psi^{+}\bo_{(k,N-k)})^{-1} \ast t_{*}p_{*}(\Oo_{2D} \otimes (\V'/\V)^{N-k+1} )) \cong \Hh_{1}\bo_{(k,N-k)} \rightarrow  \Delta_{*} \CC^N/\V .
\]
	
This implies that $\Hh_{1}\bo_{(k,N-k)}$ is given by an element in 
\begin{equation*}
\Ext^1_{\GG(k,N)\times\GG(k,N)}(\Delta_{*}\CC^N/\V ,\Delta_{*}\V ) \cong \Ext^1_{\Delta}(\CC^N/\V,\V) \bigoplus \End(\Omega_{\Delta}).
\end{equation*}

Thus we have to know what element determine it. This will be proved in the following Theorem \ref{theorem'}. But Before we prove it, we need a lemma that will be used in the proof.

\begin{lemma} \label{lemma 7} 
	$\End(\Omega_{\GG(k,N)}) \cong \CC$.
\end{lemma}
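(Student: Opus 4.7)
The plan is to dualize and instead prove $\End(T_{\GG(k,N)}) = \CC$ for the tangent bundle, which is equivalent to the stated lemma. I would use the standard identification $T_{\GG(k,N)} \cong \mathcal{H}om(\V,\CC^N/\V) \cong \V^\vee \otimes (\CC^N/\V)$, coming from the short exact sequence $0 \to \V \to \CC^N \to \CC^N/\V \to 0$ defining the tautological bundles on $\GG(k,N)$. Then
\[
\End(T_{\GG(k,N)}) = H^0\bigl(\GG(k,N),\ (\V \otimes \V^\vee) \otimes ((\CC^N/\V) \otimes (\CC^N/\V)^\vee)\bigr).
\]

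Next, using the decomposition $\V \otimes \V^\vee \cong \Oo \oplus \mathfrak{sl}(\V)$ into scalars and trace-free endomorphisms (and similarly for $\CC^N/\V$), the integrand splits into four $\GLL_N$-equivariant summands:
\[
\Oo,\qquad \mathfrak{sl}(\V),\qquad \mathfrak{sl}(\CC^N/\V),\qquad \mathfrak{sl}(\V) \otimes \mathfrak{sl}(\CC^N/\V).
\]
The first summand contributes $H^0(\GG(k,N),\Oo) = \CC$. The remaining three are the $\GLL_N$-equivariant bundles on $\GG(k,N) = \GLL_N/P$ associated to non-trivial irreducible representations of the Levi $\GLL_k \times \GLL_{N-k}$ of the parabolic $P$.

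The final step is to show these three contribute nothing to $H^0$. I would apply Borel--Weil--Bott: in each case one computes the highest weight of the $P$-representation, extends it to a weight $\lambda$ for $\GLL_N$, and checks that either $\lambda+\rho$ lies on a wall or that the dominant Weyl conjugate of $\lambda+\rho$ requires a shift by a positive length, so that $H^0$ vanishes. A more conceptual alternative is to invoke the classical fact that the tangent bundle of a Grassmannian is slope-stable with respect to the Pl\"ucker polarization, which immediately yields simplicity $\End(T_{\GG(k,N)}) = \CC$.

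The main obstacle is the Borel--Weil--Bott bookkeeping for the three non-trivial summands, in particular for $\mathfrak{sl}(\V) \otimes \mathfrak{sl}(\CC^N/\V)$, which is not itself irreducible and must be decomposed further via Littlewood--Richardson before Bott's theorem can be applied summand by summand. For the purposes of this lemma, however, the stability argument bypasses this combinatorics entirely.
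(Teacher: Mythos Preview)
Your proposal is correct, and your alternative stability argument is precisely the paper's route: the paper reduces to showing $\Tt_{\GG(k,N)}$ is stable (hence simple, by \cite{OSS}), and establishes stability by invoking that homogeneous varieties carry K\"ahler--Einstein metrics \cite{AP} together with L\"ubke's theorem \cite{L} that a K\"ahler--Einstein metric forces the tangent bundle to be stable. Your primary Borel--Weil--Bott approach is a genuinely different, more self-contained argument, trading the differential-geometric black boxes for explicit representation-theoretic combinatorics; it has the advantage of staying entirely within algebraic geometry, at the cost of the case analysis you describe. One small correction to that analysis: the summand $\mathfrak{sl}(\V) \otimes \mathfrak{sl}(\CC^N/\V)$ \emph{is} already irreducible as a representation of the Levi $\GLL_k \times \GLL_{N-k}$, since each tensor factor is an irreducible representation of its respective $\GLL$-factor and an external tensor product of irreducibles for a product group is irreducible. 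So Bott's theorem applies directly to it, and no further Littlewood--Richardson decomposition is needed.
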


\begin{proof}
	Instead of proving $\End(\Omega_{\GG(k,N)}) \cong \CC$, we prove that $\End(\Tt_{\GG(k,N)}) \cong \CC$, where $\Tt_{\GG(k,N)}$ is the tangent bundle.
	
	By Theorem 1.2.9 in Chapter 2 of \cite{OSS}, it is enough to show that $\Tt_{\GG(k,N)}$ is a stable bundle. Also, by \cite{L}, we have existence of a Kahler-Einstein metric implies that the tangent bundle is stable.
	
	By \cite{AP}, we know that all homogeneous varieties are Kahler-Einstein, so we are done.
\end{proof}

\begin{theorem} \label{theorem'}  
	$\Hh_{1}\bo_{(k,N-k)}$ is given by the nonzero element $(0,id) \in \Ext^1_{\Delta}(\CC^N/\V,\V) \bigoplus \End(\Omega_{\Delta})$.
\end{theorem}
	
\begin{proof}
	We denote the element that determine $\Hh_{1}\bo_{(k,N-k)}$ by $(a,b) \in \Ext^1_{\Delta}(\CC^N/\V,\V) \bigoplus \End(\Omega_{\Delta})$. Then it suffices to show $a=0$ and $b \neq 0$, since $\dim_{\CC} \End(\Omega_{\Delta}) =1$ by Lemma \ref{lemma 7}.
	
	Considering the following 2 exact triangles
	\[
	\Hh_{1}\bo_{(k,N-k)} \rightarrow  \Delta_{*} \CC^N/\V  \xrightarrow{(a,b)} \Delta_{*} \V[1],
	\]
	\[
	 \Delta_{*} \CC^N/\V \xrightarrow{id}  \Delta_{*} \CC^N/\V  \rightarrow 0. 
	\]

	Using the natural maps, we obtain the following diagram
	\begin{equation}
	\begin{tikzcd} \label{diagram4}
	 \Hh_{1}\bo_{(k,N-k)}  \arrow[r] \arrow[d]  
	&  \Delta_{*} \CC^N/\V  \arrow[r,"{(a,b)}"] \arrow[d,"0"]  
	& \Delta_{*} \V[1]    \\
	 \Delta_{*} \CC^N/\V \arrow[r,"0"]
	& 0  \arrow[r]  
	& \Delta_{*} \CC^N/\V[1]  .
	\end{tikzcd}	
	\end{equation}
	
	Obviously, the left square is commutative. So by axioms of triangulated category, there exists a map $s: \Delta_{*} \V[1] \rightarrow  \Delta_{*} \CC^N/\V[1]$ such that it completes the above diagram (\ref{diagram4}) to a morphism of exact triangles.
	
	By taking cones, we can complete (\ref{diagram4}) to the following morphism of exact triangles
	\[
	\xymatrix{
		\Hh_{1}\bo_{(k,N-k)} \ar[r] \ar[d]  
		&  \Delta_{*} \CC^N/\V  \ar[r]^{(a,b)} \ar[d]^{0}
		& \Delta_{*} \V[1] \ar[d]^{s}  \\
		\Delta_{*} \CC^N/\V  \ar[r]^{0} \ar[d]^{(a,b)}
		& 0  \ar[r]  \ar[d]
		& \Delta_{*} \CC^N/\V[1] \ar[d] \\
		\Delta_{*} \V[1] \ar[r]^{s}
		& \Delta_{*} \CC^N/\V[1] \ar[r]
		& C(s).
	}
	\]
	
	We label the morphisms in the following exact triangle
	\begin{equation*}
	\Delta_{*} \CC^N/\V  \xrightarrow{a_{1}} C(s)[-1] \xrightarrow{a_{2}} \Delta_{*} \V[1] \xrightarrow{s} \Delta_{*} \CC^N/\V  [1].
	\end{equation*}
		
	Then we also have the following morphism of exact triangles
	\begin{equation*}
	\begin{tikzcd}
	\Delta_{*} \V \arrow[r,"{s[-1]}"] \arrow[d]
	& \Delta_{*} \CC^N/\V \arrow[r,"a_1"] \arrow[d,"id"]
	& C(s)[-1] \arrow[d,"a_2"] \\
	\Hh_{1}\bo_{(k,N-k)}  \arrow[r] 
	&  \Delta_{*} \CC^N/\V  \arrow[r,"{(a,b)}"]  
	& \Delta_{*} \V[1] .
	\end{tikzcd}	
	\end{equation*}
	
	The commutativity of the right square implies that $(a,b) \circ id = a_2 \circ a_1 =0$. Similarly to the map $(a,b)$ which was obtained via the calculation of adjunctions, under the the following calculation
	\begin{equation*}
	\Hom_{\GG(k,N)\times\GG(k,N)}(\Delta_{*}\CC^N/\V ,\Delta_{*}\CC^N/\V)  \cong \Hom_{\Delta}(\CC^N/\V,\CC^N/\V) \ (\text{since} \ \Hom^{-i} \cong 0 \ \text{for all} \ i \geq 1),
	\end{equation*} the map $id:\Delta_{*} \CC^N/\V \rightarrow \Delta_{*} \CC^N/\V $ should correspond to the element $(id,0,0,...,0)$. 
	 
	So the composition $(a,b) \circ id  =0$, under the above adjunction calculations, is equal to $(a,b,0,0,...,0) \circ (id,0,0,...,0)=(a,0,0,...,0)=0$. Hence this implies $a=0$.
	
	Next we show that $b \neq 0$. We will prove this by using the argument of contradiction. So let us assume that $b=0$. Then we get $(a,b)=0$ which implies that $\Hh_{1}\bo_{(k,N-k)} \cong \Delta_{*}(\V \bigoplus \CC^N/\V)$. Thus we get 
	\begin{equation*}
	(\Psi^{+} \ast \Hh_{1})\bo_{(k,N-k)} \cong \Delta_{*}((\V \bigoplus \CC^N/\V) \otimes \det(\CC^N/\V))[1+k-N].
	\end{equation*}

	Applying the derived pullback $\Delta^*$, we get 
	\begin{equation} \label{eq1'}
	\Delta^*(\Psi^{+} \ast \Hh_{1})\bo_{(k,N-k)} \cong (\bigoplus_{i=0}^{k(N-k)} \bigwedge^{i}\Nn_{\Delta}^{\vee}[i]) \otimes (\V \bigoplus \CC^N/\V) \otimes \det(\CC^N/\V)[1+k-N].
	\end{equation}
	
	By definition, we have $(\Psi^{+} \ast \Hh_{1})\bo_{(k,N-k)} \cong t_{*}p_{*}(\Oo_{2D} \otimes (\V'/\V)^{N-k+1})$, and we also have $\Delta = j \circ t$. Thus 
	\begin{align}
	\begin{split}
	\Delta^*(\Psi^{+} \ast \Hh_{1})\bo_{(k,N-k)} 
	& \cong j^*t^*t_{*}p_{*}(\Oo_{2D} \otimes (\V'/\V)^{N-k+1}) \\ 
	& \cong j^*(\bigoplus_{i=0}^{\tcodim Y}\bigwedge^{i}\Nn^{\vee}_{Y}[i]) \otimes j^*p_{*}(\Oo_{2D} \otimes (\V'/\V)^{N-k+1}). \label{5.19}
	\end{split}
	\end{align}
	
	An easy calculation shows that $\tdim Y =k(N-k)+N-1$, so $\tcodim Y = k(N-k)-N+1$. On the other hand, we also calculate $j^*p_{*}(\Oo_{2D} \otimes (\V'/\V)^{N-k+1})$ explicitly. Using the following fibred product diagram 
	\begin{equation*} 
	\begin{tikzcd} [column sep=large]
	D \arrow[r, "i"]  \arrow[d, "p|_{D}"] & X \arrow[d, "p"]\\
	\Delta \arrow[r, "j"] & Y
	\end{tikzcd}
	\end{equation*} we get
	\[
	j^*p_{*}(\Oo_{2D} \otimes (\V'/\V)^{N-k+1}) \cong p|_{D*}i^*(\Oo_{2D} \otimes (\V'/\V)^{N-k+1}).
	\]
	
	Tensoring (\ref{ses8}) with $n=2$ by $(\V'/\V)^{N-k-1}$, we get 
	\begin{equation*}
	(\V''/\V''')^2 \otimes (\V'/\V)^{N-k-1} \xrightarrow{c\otimes id} (\V'/\V)^{N-k+1} \rightarrow \Oo_{2D} \otimes (\V'/\V)^{N-k+1}
	\end{equation*} where we denote $c:(\V''/\V''')^2 \rightarrow (\V'/\V)^2$ to be the natural inclusion.
	
	Applying the pullback $i^*$, since on $D$ we have $V=V''$, the map $c$ will become $0$ and thus
	\[
	i^*((\V''/\V''')^2 \otimes (\V'/\V)^{N-k-1}) \xrightarrow{0} i^*((\V'/\V)^{N-k+1}) \rightarrow i^*(\Oo_{2D} \otimes (\V'/\V)^{N-k+1}).
	\]
	
	This is again an exact triangle, which says that 
	\[
	i^*(\Oo_{2D} \otimes (\V'/\V)^{N-k+1}) \cong 	i^*((\V''/\V''')^2 \otimes (\V'/\V)^{N-k-1}) [1] \bigoplus i^*((\V'/\V)^{N-k+1}).
	\]
	
	Using projection formula and Proposition \ref{proposition 3}, we can conclude that 
	\begin{equation*}
	p|_{D*}i^*(\Oo_{2D} \otimes (\V'/\V)^{N-k+1})  \cong \CC^N/\V \otimes \det(\CC^N/V) [1+k-N].
	\end{equation*}
	
	Thus 
	\begin{equation} \label{eq2'}
	(\ref{5.19}) \cong 
	j^*(\bigoplus_{i=0}^{k(N-k)-N+1}\bigwedge^{i}\Nn^{\vee}_{Y}[i]) \otimes\CC^N/\V \otimes \det(\CC^N/V) [1+k-N].
	\end{equation}
	
	Now, we have $(\ref{eq1'}) \cong  (\ref{eq2'})$. Tensoring both sides by $\det(\CC^N/\V)^{-1}$ and shifted by $[N-k-1]$, we get 
	\begin{equation} \label{eq3'}
	j^*(\bigoplus_{i=0}^{k(N-k)-N+1}\bigwedge^{i}\Nn^{\vee}_{Y}[i]) \otimes\CC^N/\V \cong (\bigoplus_{i=0}^{k(N-k)} \bigwedge^{i}\Nn_{\Delta}^{\vee}[i]) \otimes (\V \bigoplus \CC^N/\V).
	\end{equation}
	
	Note that the differentials on both sides are all equal to $0$, and $N \geq 2$ implies $k(N-k)-N+1 < k(N-k)$. Taking cohomological sheaves $\Hh^{-k(N-k)}$ on (\ref{eq3'}) we get \begin{equation*}
	    0 \cong \det(\Nn^{\vee}_{\Delta}) \otimes (\V \bigoplus \CC^N/\V) 
	\end{equation*} which is a contradiction.
	
	Hence the assumption $b=0$ is false, and we must have $b \neq 0$. The proof is completed.
		
	\end{proof}

Finally, we prove conditions (13a), (14a). 

\begin{theorem} \label{theorem 3}(Conditions (13a), (14a)).
	We have the following (non-split) exact triangles 
	\begin{equation*}
	(\Hh_{1} \ast \Ee_{r})\bo_{(k,N-k)} \rightarrow (\Ee_{r}\ast \Hh_{ 1})\bo_{(k,N-k)}  \rightarrow (\Ee_{r+1}\bigoplus \Ee_{r+1}[1])\bo_{(k,N-k)}.
	\end{equation*}
\end{theorem}

\begin{proof}
    It is sufficient to prove the case where $r=0$. Recall that from Theorem \ref{theorem'}, we have  $\Hh_{1}\bo_{(k,N-k)}  \in \Dd^b(\GG(k,N)\times \GG(k,N))$  fits in the following exact triangle
	\begin{equation*}
	\Delta_{*}(\V) \rightarrow \Hh_{1}\bo_{(k,N-k)}  \rightarrow \Delta_{*}(\CC^N/\V),
	\end{equation*} and it is determined by the element 
	$(0,id) \in  \Ext^1_{\GG(k,N)}(\CC^N/\V,\V) \bigoplus \End(\Omega_{\GG(k,N)})$.
	
	The first part of the proof is to understand $(\Ee_{0} \ast \Hh_{1})\bo_{(k,N-k)}$ and $(\Hh_{1} \ast \Ee_{0})\bo_{(k,N-k)}$. The main idea is to keep tracking the element $(0,id)$ during the process of convolutions. Also, since we will keep calculate $\Ext^1(\Gg_{1}, \Gg_{2})$ in the proof, we will omit the geometric space in the notation in case if it causes no confusion.
	
	We calculate $(\Ee_{0} \ast \Hh_{1})\bo_{(k,N-k)}$ only, the argument is similar for $(\Hh_{1} \ast \Ee_{0})\bo_{(k,N-k)}$. By definition, it is given by $\pi_{13*}(\pi_{12}^*(\Hh_{1}\bo_{(k,N-k)} )\otimes \pi_{23}^*\iota_{*}\Oo_{Fl(k-1,k)})$. For the first step, we have the following exact triangle
	\begin{equation} \label{eq et1}
	 \pi_{12}^*\Delta_{*}(\V) \rightarrow \pi_{12}^*\Hh_{1}\bo_{(k,N-k)} \rightarrow \pi_{12}^* \Delta_{*}(\CC^N/\V).
	\end{equation} Using adjunction, it is easy to calculate that $\pi_{12}^*\Hh_{1}1_{(k,N-k)}$ is given by an element in
	\begin{equation*}
	\Ext^1(\pi_{12}^*\Delta_{*}(\CC^N/\V),\pi_{12}^*\Delta_{*}(\V)) \cong   \Ext^1_{\GG(k,N)}(\CC^N/\V,\V) \bigoplus \End(\Omega_{\GG(k,N)}).
	\end{equation*} Thus $\pi_{12}^*\Hh_{1}\bo_{(k,N-k)}$ is still given by $(0,id) \in \Ext^1_{\GG(k,N)}(\CC^N/\V,\V) \bigoplus \End(\Omega_{\GG(k,N)})$.
	
    The second step is to tensor the exact triangle (\ref{eq et1}) by $\pi_{23}^*\iota_{*}\Oo_{Fl(k-1,k)}$, and we get the following exact triangle
	\begin{equation} \label{5.2.25}
	\pi_{12}^*\Delta_{*}(\V) \otimes \pi_{23}^*\iota_{*}\Oo_{Fl(k-1,k)} \rightarrow \pi_{12}^*\Hh_{1}\bo_{(k,N-k)} \otimes \pi_{23}^*\iota_{*}\Oo_{Fl(k-1,k)} \rightarrow \pi_{12}^* \Delta_{*}(\CC^N/\V) \otimes \pi_{23}^*\iota_{*}\Oo_{Fl(k-1,k)}.
	\end{equation}
	
	So $\pi_{12}^*\Hh_{1}\bo_{(k,N-k)}\otimes \pi_{23}^*\iota_{*}\Oo_{Fl(k-1,k)}$ is given by an element in
	\begin{equation} \label{ext1}
	\Ext^1(\pi_{12}^*\Delta_{*}(\CC^N/\V)\otimes \pi_{23}^*\iota_{*}\Oo_{Fl(k-1,k)},\pi_{12}^*\Delta_{*}(\V)\otimes \pi_{23}^*\iota_{*}\Oo_{Fl(k-1,k)}). 
	\end{equation}

	To calculate (\ref{ext1}), we use the following fibred product diagram
	\begin{equation*}
	\begin{tikzcd} 
	\GG(k,N)\times \GG(k-1,N)   \arrow[r, "\Delta \times id"]  \arrow[d, "\pi_{1}"] &\GG(k,N)\times \GG(k,N) \times \GG(k-1,N) \arrow[d, "\pi_{12}"]\\
	\GG(k,N)  \arrow[r, "\Delta"] & \GG(k,N)\times \GG(k,N).
	\end{tikzcd}
	\end{equation*}

	Using base change, projection formula and the identiy $\pi_{23} \circ (\Delta \times id) = id$, we obtain
	\begin{align}
	(\ref{ext1}) 
	&\cong \Ext^1(\pi_{1}^*(\CC^N/\V)\otimes \iota_{*}\Oo_{Fl(k-1,k)},\pi_{1}^*(\V)\otimes\iota_{*}\Oo_{Fl(k-1,k)}) \label{ext4} \\
	&\bigoplus \Hom(\Nn_{1}^{\vee} \otimes \pi_{1}^*(\CC^N/\V)\otimes \iota_{*}\Oo_{Fl(k-1,k)},\pi_{1}^*(\V)\otimes\iota_{*}\Oo_{Fl(k-1,k)}) \label{ext5}, 
	\end{align} where $\Nn_{1}^{\vee}$ denotes the conormal bundle $\Nn^{\vee}_{\Delta\times \GG(k-1,N)/\GG(k,N)\times\GG(k,N)\times\GG(k-1,N)}$.
	
	Similarly, applying adjunction again, it is easy to calculate  
	\begin{equation*}
    (\ref{ext4}) \cong \Ext^1_{Fl(k-1,k)}(\iota^*\pi_{1}^*(\CC^N/\V),\iota^*\pi_{1}^*(\V)) \bigoplus \Hom(\iota^*\pi_{1}^*(\CC^N/\V)\otimes \Nn^{\vee}_{2},\iota^*\pi_{1}^*(\V)), 
	\end{equation*} where $\Nn^{\vee}_{2}$ denotes the conormal bundle $\Nn^{\vee}_{Fl(k-1,k)/\GG(k,N)\times \GG(k-1,N)}$.
	
	Note that we have 
	\begin{align*}
	\Nn^{\vee}_{1} \cong  (\CC^N/\V)^{\vee} \otimes \V \ \text{and} \
	\Nn^{\vee}_{2} \cong (\CC^N/\V)^{\vee} \otimes \V',
	\end{align*} so we obtain
	\begin{align*}
	(\ref{ext5}) \cong \Hom((\CC^N/\V)^{\vee} \otimes \V, (\CC^N/\V)^{\vee} \otimes \V) \cong \End(\Omega_{\GG(k,N)}),
	\end{align*} and similarly
	\begin{equation*}
	\Hom(\iota^*\pi_{1}^*(\CC^N/\V)\otimes \Nn^{\vee}_{2},\iota^*\pi_{1}^*(\V)) \cong \Hom_{Fl(k-1,k)}( (\CC^N/\V)^{\vee} \otimes \V',(\CC^N/\V)^{\vee}\otimes \V).
	\end{equation*}
	
	Combining them, we get 
	\begin{equation*}
	(\ref{ext1}) 
	\cong \Ext^1_{Fl(k-1,k)}(\CC^N/\V,\V) \bigoplus  \Hom((\CC^N/\V)^{\vee}\otimes \V',(\CC^N/\V)^{\vee} \otimes \V ) \bigoplus \End(\Omega_{\GG(k,N)}). \label{iso2}
	\end{equation*}

	This tells us that the exact triangle (\ref{5.2.25}) is determined by an element 
	\[
	(0,a,id) \in \Ext^1_{Fl(k-1,k)}(\CC^N/\V,\V) \bigoplus  \Hom((\CC^N/\V)^{\vee}\otimes \V',(\CC^N/\V)^{\vee} \otimes \V ) \bigoplus \End(\Omega_{\GG(k,N)}) 
	\] for some $a \in \Hom((\CC^N/\V)^{\vee}\otimes \V',(\CC^N/\V)^{\vee} \otimes \V )$.

	For the final third step, we apply $\pi_{13*}$ and get the following exact triangle
	\begin{equation}  \label{ses5}
	\pi_{13*}(\pi_{12}^*\Delta_{*}(\V) \otimes \pi_{23}^*\iota_{*}\Oo_{Fl(k-1,k)}) \rightarrow (\Ee_{0} \ast \Hh_{1})\bo_{(k,N-k)}
	\rightarrow 
	\pi_{13*}(\pi_{12}^* \Delta_{*}(\CC^N/\V) \otimes \pi_{23}^*\iota_{*}\Oo_{Fl(k-1,k)}).
	\end{equation}

	Using the base change and  $\pi_{13} \circ (\Delta\times id) = \pi_{23} \circ (\Delta\times id)  = id$, it is easy to calculate that 
	\begin{equation*}
	\pi_{13*}(\pi_{12}^*\Delta_{*}(\V)\otimes \pi_{23}^*\iota_{*}\Oo_{Fl(k-1,k)}) 
	\cong \pi_{1}^*(\V)\otimes \iota_{*}\Oo_{Fl(k-1,k)} \cong \iota_{*}\V,
	\end{equation*} and similarly for the case for $\CC^N/\V$. Thus the exact triangle (\ref{ses5}) becomes
	\begin{equation} \label{ses14} 
	\iota_{*}(\V) \rightarrow (\Ee_{0} \ast \Hh_{1})\bo_{(k,N-k)} \rightarrow  \iota_{*}(\CC^N/\V)
	\end{equation}
	and $(\Ee_{0} \ast \Hh_{1})1_{(k,N-k)}$ is given by an element in the following
	\begin{equation*}
	\Ext^1(\iota_{*}(\CC^N/\V),\iota_{*}(\V))
	\cong \Ext_{Fl(k-1,k)}^1(\CC^N/\V,\V) \bigoplus \Hom((\CC^N/\V)^{\vee} \otimes \V',(\CC^N/\V)^{\vee} \otimes \V).
	\end{equation*}

	Since at the second step, the exact triangle (\ref{5.2.25}) is determined by the element 
	\begin{equation*}
	(0,a,id) \in \Ext^1_{Fl(k-1,k)}(\CC^N/\V,\V) \bigoplus  \Hom((\CC^N/\V)^{\vee}\otimes \V',(\CC^N/\V)^{\vee} \otimes \V )  \bigoplus \End(\Omega_{\GG(k,N)})
	\end{equation*} we have the $(\Ee_{0} \ast \Hh_{1})1_{(k,N-k)}$ in (\ref{ses14}) is determined by the element
	\[
	(0,a) \in \Ext_{Fl(k-1,k)}^1(\CC^N/\V,\V) \bigoplus \Hom((\CC^N/\V)^{\vee} \otimes \V',(\CC^N/\V)^{\vee} \otimes \V).
	\]

	On the other hand, using the same argument $(\Hh_{1} \ast \Ee_{0})\bo_{(k,N-k)}$ fits into the following exact triangle
	\begin{equation} \label{sess}
	\iota_{*}(\V') \rightarrow (\Hh_{1} \ast \Ee_{0})\bo_{(k,N-k)} \rightarrow  \iota_{*}(\CC^N/\V') 
	\end{equation} and is determined by an element 
	\begin{equation*}
	(0, b) \in  \Ext_{Fl(k-1,k)}^1(\CC^N/\V',\V') \bigoplus \Hom((\CC^N/\V)^{\vee} \otimes \V',(\CC^N/\V')^{\vee} \otimes \V').
	\end{equation*}

	With the two exact triangles (\ref{ses14}) and (\ref{sess}), we form the following diagram of exact triangles
	\begin{equation} \label{diagram5} 
	\xymatrix{
		\iota_{*}(\V')  \ar[r] \ar[d]^{i}  
		& (\Hh_{1} \ast \Ee_{0})\bo_{(k,N-k)}  \ar[r]
		& \iota_{*}(\CC^N/\V')  \ar[r]^{(0,b)} \ar[d]^{\pi} 
		& \iota_{*}(\V')[1] \ar[d]^{i[1]} \\
		 \iota_{*}(\V) \ar[r] 
		& (\Ee_{0} \ast \Hh_{1})\bo_{(k,N-k)} \ar[r]  
		& \iota_{*}(\CC^N/\V)  \ar[r]^{(0,a)}  
		& \iota_{*}(\V)[1]
	}
	\end{equation}

	The second part of the proof is to show that the right square in (\ref{diagram5}) commutes so that there is an induced map $(\Hh_{1} \ast \Ee_{0} )\bo_{(k,N-k)}  \rightarrow (\Ee_{0} \ast \Hh_{1})\bo_{(k,N-k)} $,
	
	For simplicity, we denote $a$ for the map $(0,a)$ and similarly $b$ for $(0,b)$. So we have to show that $i[1] \circ b = a \circ \pi$.
	
	Let us denote the maps in the following exact triangles
	\[
	(\Ee_{0} \ast \Hh_{1})\bo_{(k,N-k)} \xrightarrow{x}  
	 \iota_{*}(\CC^N/\V)  \xrightarrow{a}
	 \iota_{*}(\V)[1],
	\]
	\[
	\iota_{*}(\CC^N/\V') \xrightarrow{\pi} \iota_{*}(\CC^N/\V) \xrightarrow{y} \iota_{*}(\V/\V')[1],
	\] and consider the following diagram of exact triangles
    \begin{equation}
    \begin{tikzcd}  \label{diagram3}
	(\Ee_{0} \ast \Hh_{1})\bo_{(k,N-k)}  \arrow[r,"x"] \arrow[d,"x"]  
	&  \iota_{*}(\CC^N/\V)  \arrow[r,"a"] \arrow[d,"y"]  
	& \iota_{*}(\V)[1]    \\
	\iota_{*}(\CC^N/\V)  \arrow[r,"y"]
	& \iota_{*}(\V/\V')[1]  \arrow[r]  
	& \iota_{*}(\CC^N/\V')[1]  .
    \end{tikzcd}	
    \end{equation}
	
	Since the left square is commutative, by axioms of triangulated categories, there exists a morphism $f:\iota_{*}(\V)[1] \rightarrow  \iota_{*}(\CC^N/\V')[1] $ such that it complete to a morphism between exact triangles.
	
	Next, we complete diagram (\ref{diagram3}) to the following morphisms between exact triangles via taling cone
	\[
	\xymatrix{
		(\Ee_{0} \ast \Hh_{1})\bo_{(k,N-k)}  \ar[r]^{x} \ar[d]^{x}  
		&  \iota_{*}(\CC^N/\V)  \ar[r]^{a} \ar[d]^{y}  
		& \iota_{*}(\V)[1]  \ar[d]^{f}  \\
		\iota_{*}(\CC^N/\V)  \ar[r]^{y} \ar[d]
		& \iota_{*}(\V/\V')[1]  \ar[r]  \ar[d]
		& \iota_{*}(\CC^N/\V')[1] \ar[d]^{\theta} \\
		\iota_{*}(\V)[1] \ar[r]^{f} 
		& \iota_{*}(\CC^N/\V')[1] \ar[r]^{\theta} 
		& C(f)
	}
	\]
	
	We consider the exact triangle on right hand side, i.e.
	\[
	C(f)[-1] \xrightarrow{\delta} \iota_{*}(\V)[1] \xrightarrow{f} \iota_{*}(\V)[1] \xrightarrow{\theta} C(f)
	\] for some map $\delta$. Then we also have the following morphism of exact triangles
	\begin{equation*}
	\begin{tikzcd}  
	\iota_{*}(\V) \arrow[r,"{f[-1]}"] \arrow[d] 
	& \iota_{*}(\CC^N/\V') \arrow[r,"{\theta[-1]}"] \arrow[d,"\pi"] 
	& C(f)[-1] \arrow[d, "{\delta}"] \\
	(\Ee_{0} \ast \Hh_{1})\bo_{(k,N-k)}  \arrow[r,"x"] 
	&  \iota_{*}(\CC^N/\V)  \arrow[r,"a"]
	& \iota_{*}(\V)[1] .    
	\end{tikzcd}	
	\end{equation*}
	
	By the commutative of the squares, we get $a \circ \pi = \delta \circ \theta[-1] =0$. Using the same argument as above we can also obtain $i[1] \circ b =0$. Thus $i[1] \circ b = a \circ \pi$ and the diagram commutes. We get an induced map $\gamma:(\Hh_{1} \ast \Ee_{0} )\bo_{(k,N-k)}  \rightarrow (\Ee_{0} \ast \Hh_{1})\bo_{(k,N-k)}$. The diagram (\ref{diagram5}) becomes
	\[
	\xymatrix{
		\iota_{*}(\V') \ar[d]^{i} \ar[r] &(\Hh_{1} \ast \Ee_{0} )1_{(k,N-k)}  \ar[r] \ar[d]^{\gamma} &\iota_{*}(\CC^N/\V') \ar[r]^{b} \ar[d]^{\pi} & \iota_{*}(\V')[1] \ar[d]^{i[1]} \\
		\iota_{*}(\V)  \ar[r] &(\Ee_{0} \ast \Hh_{1})1_{(k,N-k)} \ar[r]  &\iota_{*}(\CC^N/\V) \ar[r]^{a} & \iota_{*}(\V)[1],
	}
	\] then we complete the diagram into exact triangles via taking cones
	\[
	\xymatrix{
		\iota_{*}(\V') \ar[d]^{i} \ar[r] &(\Hh_{1} \ast \Ee_{0} )1_{(k,N-k)}  \ar[r] \ar[d]^{\gamma} &\iota_{*}(\CC^N/\V') \ar[r]^{b} \ar[d]^{\pi} & \iota_{*}(\V')[1] \ar[d]^{i[1]} \\
		\iota_{*}(\V)  \ar[r] \ar[d]^{j} &(\Ee_{0} \ast \Hh_{1})1_{(k,N-k)} \ar[r] \ar[d]  &\iota_{*}(\CC^N/\V) \ar[r]^{a} \ar[d]^{y} & \iota_{*}(\V)[1] \ar[d]^{j[1]} \\
		\iota_{*}(\V/\V') \ar[r] & Cone(\gamma) \ar[r]& 	\iota_{*}(\V/\V')[1] \ar[r]^{s} &	\iota_{*}(\V/\V')[1]. 
	}
	\]
	
	The final part of the proof is to show that
	\begin{equation*}
	Cone(\gamma) \cong \iota_{*}(\V/\V') \bigoplus \iota_{*}(\V/\V')[1]
	\end{equation*} and since that $\Ee_{1}\bo_{(k,N-k)}\coloneqq \iota_{*}(\V/\V')$, we obtain the desire exact triangle.

	This is the same as showing the map $s:\iota_{*}(\V/\V')[1] \rightarrow	\iota_{*}(\V/\V')[1] $ is zero. Note that 
	\begin{equation*}
	\Hom(\iota_{*}(\V/\V')[1],\iota_{*}(\V/\V')[1])\cong \CC,
	\end{equation*} so the only maps $s:\iota_{*}(\V/\V')[1] \rightarrow	\iota_{*}(\V/\V')[1] $ are either zero or isomorphism (identity up to a nonzero constant). Assume it is not zero, i.e. $s=id$, then from the commutativity of the square, we have $s \circ y = y = j[1] \circ a$.
	
	Both the maps $y$ and $j[1] \circ a$ are in 
	\[
	\Ext^{1}(\iota_{*}\CC^N/\V, \iota_{*}\V/\V') \cong \Ext^{1}_{Fl(k-1,k)}(\CC^N/\V, \V/\V') \bigoplus \Hom(\V' \otimes (\CC^N/\V)^{\vee}, \V/\V' \otimes (\CC^N/\V)^{\vee}).
	\]
	
	Clearly, we have $y \in \Ext^{1}_{Fl(k-1,k)}(\CC^N/\V, \V/\V')$ which is the first direct summand. However, since $a \in \Hom(\V' \otimes (\CC^N/\V)^{\vee}, \V/\V' \otimes (\CC^N/\V)^{\vee})$, we must have $j[1] \circ a \in \Hom(\V' \otimes (\CC^N/\V)^{\vee}, \V/\V' \otimes (\CC^N/\V)^{\vee})$, which is the second direct summand.
	
	So the only possible case is $y=j[1] \circ a =0$, which contradicts to the fact $y \neq 0$. Hence $s=0$.
	
	Thus we get $Cone(\gamma) \cong \iota_{*}(\V/\V') \bigoplus \iota_{*}(\V/\V')[1]$ and the proof is complete.
	
\end{proof}

\begin{example}
We give an example for Theorem \ref{theorem 4}, which is the case where the weights are $(k,N-k)=(2,0), \ (1,1),\ (0,2)$. The corresponding weight categories are described in the following picture.
\begin{equation*}
\begin{tikzcd}
\Dd^b(\GG(2,2)) \arrow[r,shift left,"{\E}_{r}"] &\Dd^b(\GG(1,2)=\PP^1) \arrow[l,shift left,"{\F}_{s}"] \arrow[r,shift left,"{\E}_{r}"] &\Dd^b(\GG(0,2)) \arrow[l,shift left,"{\F}_{s}"].
\end{tikzcd}
\end{equation*}

Consider $\PP^1 \times \PP^1 =\{(V,V'')\ | \ \dim V=\dim V''=1\}$, and let $\V$, $\V''$ to be the tautological line bundles on $\PP^1 \times \PP^1$ and $\CC^2$ to be the trivial bundle of rank 2 on $\PP^1$. Denote $\Delta \subset \PP^1 \times \PP^1$ to be the diagonal which is also a divisor. Then by definition $\Hh_{1}\bo_{(1,1)}=\Oo_{2\Delta} \otimes \CC^2/\V$, and it is determined by the following exact triangle in $\Dd^b(\PP^1 \times \PP^1)$ 
\begin{equation*}
\Delta_{*}\V \rightarrow \Hh_{1}\bo_{(1,1)}=\Oo_{2\Delta} \otimes \CC^2/\V \rightarrow \Delta_{*} \CC^2/\V .	
\end{equation*}

The kernel $\Ee_{0} \bo_{(1,1)}$ is $\Oo_{\PP^1}$, a simple calculation gives us the following exact triangles in $\Dd^b(\PP^1)$
\begin{align*}
    & \V \rightarrow (\Ee_{0} \ast \Hh_{1})\bo_{(1,1)}=\V \oplus \CC^2/\V \rightarrow  \CC^2/\V  \\
    & 0 \rightarrow (\Hh_{1} \ast \Ee_{0} )\bo_{(1,1)}=\CC^2 \rightarrow \CC^2 .
\end{align*}

Combining them together, we obtain the following exact triangle
\begin{equation*} 
 (\Hh_{1} \ast \Ee_{0})\bo_{(1,1)}=\CC^2 \rightarrow (\Ee_{0} \ast \Hh_{1})\bo_{(1,1)} = \V \oplus \CC^2/\V \rightarrow \V \oplus \V[1]
\end{equation*} where $\V$ is the kernel $\Ee_{1}\bo_{(1,1)}$, which agrees with (\ref{eq5}). 
\end{example}

Finally, we prove condition (4).

\begin{lemma}   \label{lemma 12} (Condition (4)). $\Hh_{1}\bo_{(k,N-k)}$ and $\Hh_{-1}\bo_{(k,N-k)}$ are biadjoint to each other.
    \begin{equation*}
	(\Hh_{1}\bo_{(k,N-k)})_{L} \cong \Hh_{-1}\bo_{(k,N-k)} \cong (\Hh_{1}\bo_{(k,N-k)})_{R}
	\end{equation*}
\end{lemma}

\begin{proof}
	It suffices to prove the left adjoint case, the right adjoint is similar.
	
	By definition and taking left adjoint, we get 
	\begin{align*}
	(\Hh_{1}\bo_{(k,N-k)})_{L}&=\{(\Psi^{+}\bo_{(k,N-k)})^{-1} \ast t_{*}p_{*}(\Oo_{2D} \otimes (\V'/\V)^{N-k+1})\}_{L} \\ 
	&\cong (t_{*}p_{*}(\Oo_{2D} \otimes (\V'/\V)^{N-k+1}))_{L} \ast ((\Psi^{+}\bo_{(k,N-k)})^{-1})_{L}  
	\end{align*}

	Note that since $(\Psi^{+}\bo_{(k,N-k)})^{-1}$ is invertible, the left adjoint is isomorphic to its inverse, i.e. $((\Psi^{+}\bo_{(k,N-k)})^{-1})_{L} \cong \Psi^{+}\bo_{(k,N-k)}$.

	Next, we need to compute $(t_{*}p_{*}(\Oo_{2D} \otimes (\V'/\V)^{N-k+1}))_{L}$. By definition, we have 
	\begin{equation*} 
	(t_{*}p_{*}(\Oo_{2D} \otimes (\V'/\V)^{N-k+1}))_{L} \cong (t_{*}p_{*}(\Oo_{2D} \otimes (\V'/\V)^{N-k+1}))^{\vee} \otimes \pi_{2}^*\omega_{\GG(k,N)}[\tdim \GG(k,N)].
	\end{equation*}
	
	Using Grothendieck-Verdier duality, we get
	\begin{align}
	\begin{split}
	&(t_{*}p_{*}(\Oo_{2D} \otimes (\V'/\V)^{N-k+1}))^{\vee} \\
	&\cong t_{*}p_{*}((\Oo_{2D} \otimes (\V'/\V)^{N-k+1})^{\vee} \otimes \omega_{X} ) \otimes \omega_{\GG(k,N)\times\GG(k,N)}^{-1}[\tdim X-\tdim\GG(k,N)\times\GG(k,N)]. \label{adj2}
	\end{split}
	\end{align}
	
	We know that $D=Fl(k-1,k,k+1) \subset X$ is a divisor, so $\tdim X=k(N-k)+N-1$. Next, we have 
	\begin{equation} \label{adj3}
	(\Oo_{2D} \otimes (\V'/\V)^{N-k+1})^{\vee} \cong  (\Oo_{2D})^{\vee} \otimes ((\V'/\V)^{N-k+1})^{\vee} \cong (\Oo_{2D})^{\vee} \otimes (\V'/\V)^{-N+k-1}
	\end{equation}
	
	To calculate $(\Oo_{2D})^{\vee}$, we have the following exact triangle
	\[
	 i_{*}\Oo_{D} \otimes \Oo_{X}(-D) \rightarrow \Oo_{2D} \rightarrow i_{*}\Oo_{D} 
	\] taking dual, we get 
	\[
	 (i_{*}\Oo_{D})^{\vee} \rightarrow (\Oo_{2D})^{\vee} \rightarrow  (i_{*}\Oo_{D})^{\vee} \otimes \Oo_{X}(D) .
	\]
	
	Since $(i_*\Oo_{D})^{\vee} \cong  i_*\Oo_{D} \otimes \Oo_{X}(D)[-1]$, the exact triangle becomes
	\[
	 i_*\Oo_{D} \otimes \Oo_{X}(D)[-1] \rightarrow (\Oo_{2D})^{\vee} \rightarrow  i_*\Oo_{D} \otimes \Oo_{X}(2D)[-1] .
	\]
	
	We conclude that $(\Oo_{2D})^{\vee} \cong \Oo_{2D} \otimes \Oo_{X}(2D)[-1]$. Thus 
	\begin{equation*}
	(\ref{adj3})  \cong
	\Oo_{2D} \otimes \Oo_{X}(2D)  \otimes (\V'/\V)^{-N+k-1} [-1]
	\end{equation*} and thus
	\begin{equation*}
	(\ref{adj2}) \cong
	t_{*}p_{*}(\Oo_{2D} \otimes \Oo_{X}(2D) \otimes (\V'/\V)^{-N+k-1} \otimes \omega_{X} ) \otimes \omega_{\GG(k,N)\times\GG(k,N)}^{-1}[-k(N-k)+N-2].
	\end{equation*}
	
	We already know that $\Oo_{X}(D)|_{D} \cong (\V/\V''')^{-1} \otimes (\V'/\V)$, and an easy calculation gives $\omega_{D} \cong (\V/\V''')^{-k} \otimes (\V'/\V)^{N-k} \otimes \det(\V) \otimes \det(\CC^N/\V)^{-1} \otimes \omega_{\GG(k,N)}$.
	
	Thus by the adjunction formula, we obtain 
	\begin{align*}
	 \omega_{X}|_{D} &\cong (\V/\V''')\otimes (\V'/\V)^{-1} \otimes \omega_{D} \\
	 & \cong (\V/\V''')^{-k+1} \otimes (\V'/\V)^{N-k-1} \otimes \det(\V) \otimes \det(\CC^N/\V)^{-1} \otimes \omega_{\GG(k,N)}
	\end{align*}

	So combining all of them and using the projection formula, we get 
	\begin{equation*}
	(t_{*}p_{*}(\Oo_{2D} \otimes (\V'/\V)^{N-k+1}))_{L} 
	\cong t_{*}p_{*}(\Oo_{2D} \otimes (\V/\V''')^{-k-1}) \otimes \Delta_{*}\det(\V) \otimes \Delta_{*}\det(\CC^N/\V)^{-1}[N-2] 
	\end{equation*} 
	
	Thus, 
	\begin{align*}
	(\Hh_{1}\bo_{(k,N-k)})_{L} &\cong  (t_{*}p_{*}(\Oo_{2D} \otimes (\V'/\V)^{N-k+1}))_{L} \ast \Psi^{+}\bo_{(k,N-k)} \\ &\cong \pi_{13*}(  \pi_{12}^*\Delta_{*}\det(\V) \otimes \pi_{23}^*(t_{*}p_{*}(\Oo_{2D} \otimes (\V/\V''')^{-k-1})) )[k-1] \\
	& \cong (\Psi^{-}\bo_{(k,N-k)})^{-1} \ast [t_{*}p_{*}(\Oo_{2D} \otimes (\V/\V''')^{-k-1})]
	\end{align*}  which is the definition for $\Hh_{-1}\bo_{(k,N-k)}$.
	
\end{proof}

So far we prove many conditions for the $\SL_{2}$ case, the only conditions that remains to prove are (6), (7), (8) with $i=j$. We will leave them to the next subsection where we prove them for all $i,j$. Thus plus all the above works we prove Theorem \ref{theorem 2}.

\subsection{The rest relations}

Since we have already proved the $\SL_2$ case, many relations in Definition \ref{definition 2} are a direct generalization of the $\SL_2$ version. In order to prove Theorem \ref{theorem 1}, we prove the rest relations for the $\SL_n$ case in this section,  this means that we prove those that are not address in the $\SL_2$ case.

First, we note that conditions (2), (3) are obvious. Next, it is easy to check conditions (9c), (10c), (11c), (12c), (13c), (14c), (15) where $|i-j| \geq 2$. 

So it remains to check the relations (6), (7), (8), (9b), (10b), (11b), (12b), (13b), (14b).

Condition (6) is easy to show since ${\Psi}^{\pm}_{i}\bo_{\kk}$ are just line bundles.

The next is condition (8).

\begin{lemma} (Condition (8)). \label{lemma 13}
	\begin{equation*}
	   (\Hh_{i,1} \ast \Psi_{j}^{+})\bo_{\kk} \cong  ( \Psi_{j}^{+} \ast \Hh_{i, 1})\bo_{\kk}
	\end{equation*} for all $i,j$.
\end{lemma}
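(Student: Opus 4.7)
The plan is to exploit the fact that each $\Psi_j^{\pm}\bo_{\kk}$ is an invertible Fourier-Mukai kernel of the form $\Delta(\kk)_{*}L_j^{\pm}$ (up to a homological shift), where $L_j^{+} := \det(\V_{j+1}/\V_j)$ and $L_j^{-} := \det(\V_j/\V_{j-1})^{-1}$. A base-change computation of the type carried out in the proof of Lemma \ref{lemma 3} then shows that, for any $K \in \Dd^b(Y(\kk)\times Y(\kk))$,
\[
(\Psi_j^{\pm}\ast K)\bo_{\kk} \cong K \otimes \pi_2^{*}L_j^{\pm}, \qquad (K \ast \Psi_j^{\pm})\bo_{\kk} \cong K \otimes \pi_1^{*}L_j^{\pm}
\]
with the same homological shift on both sides, where $\pi_1,\pi_2 : Y(\kk)\times Y(\kk) \to Y(\kk)$ denote the two projections. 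Hence it suffices to establish the identity
\[
\Hh_{i,\pm 1}\bo_{\kk} \otimes \pi_1^{*}L_j^{\pm} \;\cong\; \Hh_{i,\pm 1}\bo_{\kk} \otimes \pi_2^{*}L_j^{\pm}
\]
for every $i,j$.

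For $|i-j|\geq 2$, only tautological bundles $\V_m$ with $m\neq i$ occur in $L_j^{\pm}$, and these satisfy $\V_m \cong \V''_m$ on $Y_i(\kk)$ by the very definition of $Y_i(\kk)$ given in Subsection \ref{5.1}; since $\Hh_{i,\pm 1}\bo_{\kk}$ is supported on $Y_i(\kk)$, the two pullbacks already agree and the identity holds. For the remaining cases $j\in\{i-1,i,i+1\}$, the discrepancy $\pi_1^{*}L_j^{\pm}\otimes(\pi_2^{*}L_j^{\pm})^{-1}$ on $Y_i(\kk)$ is a signed power of $\det\V_i \otimes (\det\V''_i)^{-1}$. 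Pulling this up to $X_i(\kk)$ through $p_i(\kk)$ and substituting both forms of the divisor relation
\[
\Oo_{X_i(\kk)}(-D_i) \cong \V''_i/\V'''_i \otimes (\V'_i/\V_i)^{-1} \cong \V_i/\V'''_i \otimes (\V'_i/\V''_i)^{-1}
\]
recorded in Subsection \ref{5.1}, the discrepancy reduces to $N^{\pm 1}$, where $N := (\V'_i/\V_i) \otimes (\V'_i/\V''_i)^{-1}$. Via the projection formula, the claim then boils down to
\[
p_i(\kk)_{*}\bigl(\Oo_{2D_i} \otimes (\V'_i/\V_i)^{k_{i+1}+1}\otimes N^{\pm 1}\bigr) \;\cong\; p_i(\kk)_{*}\bigl(\Oo_{2D_i} \otimes (\V'_i/\V_i)^{k_{i+1}+1}\bigr)
\]
together with the analogous identity for $\Hh_{i,-1}\bo_{\kk}$ in which $(\V_i/\V'''_i)^{-k_i-1}$ plays the role of $(\V'_i/\V_i)^{k_{i+1}+1}$.

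The main obstacle is this last displayed identity: a priori $N$ restricts trivially to $D_i$ (where $V_i = V''_i$) but not to the full thickening $2D_i$, so the two sheaves on $X_i(\kk)$ are generally non-isomorphic, and the equality must be established only after pushforward by $p_i(\kk)$. The strategy is to tensor the short exact sequence
\[
0 \to \Oo_{D_i}\otimes\Oo_{X_i(\kk)}(-D_i) \to \Oo_{2D_i} \to \Oo_{D_i} \to 0
\]
by the two line bundles separately, apply $p_i(\kk)_{*}$, and compare the resulting exact triangles on $Y_i(\kk)$ piece by piece. The $\Oo_{D_i}$ quotients match because $N|_{D_i}$ is trivial; for the $\Oo_{D_i}\otimes\Oo_{X_i(\kk)}(-D_i)$ kernel one exploits the projective-bundle structure of $p_i^{-1}(\{V_i = V''_i\}) \to \{V_i = V''_i\}\subset Y_i(\kk)$ and applies Proposition \ref{proposition 3} along the projective fibers. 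The specific exponents $k_{i+1}+1$ for $\Hh_{i,1}\bo_{\kk}$ and $-k_i-1$ for $\Hh_{i,-1}\bo_{\kk}$ are designed to place the discrepancy squarely in the vanishing range of the relative pushforward, mirroring the vanishing mechanism already exploited in the proof of Proposition \ref{proposition 5}; the residual case-check for $j\in\{i-1,i,i+1\}$ and each sign is then routine.
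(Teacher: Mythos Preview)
Your reduction is correct and useful: convolving with $\Psi_j^{\pm}$ on either side amounts to tensoring the kernel by $\pi_1^*L_j^{\pm}$ or $\pi_2^*L_j^{\pm}$, and for $|i-j|\ge 2$ the discrepancy $\pi_1^*L_j^{\pm}\otimes(\pi_2^*L_j^{\pm})^{-1}$ is already trivial on the support $Y_i(\kk)$. For $j\in\{i-1,i,i+1\}$ you also correctly identify the discrepancy pulled back to $X_i(\kk)$ with a power of $N=(\V'_i/\V_i)\otimes(\V'_i/\V''_i)^{-1}$, and by projection formula the problem becomes
\[
p_{i*}\bigl(\Oo_{2D_i}\otimes(\V'_i/\V_i)^{k_{i+1}+1}\otimes N^{\pm 1}\bigr)\;\cong\;p_{i*}\bigl(\Oo_{2D_i}\otimes(\V'_i/\V_i)^{k_{i+1}+1}\bigr).
\]

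The gap is in the last step. You tensor the filtration $0\to\Oo_{D_i}(-D_i)\to\Oo_{2D_i}\to\Oo_{D_i}\to 0$ by the two line bundles and push forward, but because $N|_{D_i}\cong\Oo_{D_i}$ the sub \emph{and} the quotient already agree on the nose (before pushforward). There is no ``discrepancy'' in either outer term to kill; in particular neither of them lands in the vanishing range of Proposition~\ref{proposition 3}. Concretely, the subobject is $\Oo_{D_i}\otimes\V_i/\V'''_i\otimes(\V'_i/\V_i)^{k_{i+1}}$, whose pushforward is $\Delta_*(\V_i/\V_{i-1})\otimes\det(\V_{i+1}/\V_i)[1-k_{i+1}]\neq 0$ (this is exactly the left term of (\ref{ses11})). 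So you obtain two exact triangles with isomorphic outer terms, but nothing you have written controls the extension class, and that class is genuinely nonzero here. Matching outer terms is not enough to conclude that the middle terms are isomorphic.

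What you are missing is precisely the content of the paper's proof. The paper exploits Theorem~\ref{lemma 6}: $\Hh_{i,1}\bo_{\kk}$ is determined by the triangle $\Delta_*(\V_i/\V_{i-1})\to\Hh_{i,1}\bo_{\kk}\to\Delta_*(\V_{i+1}/\V_i)$ together with a specific class in $\Ext^1(\Delta_*(\V_{i+1}/\V_i),\Delta_*(\V_i/\V_{i-1}))$. Since both outer terms are pushforwards from the diagonal and $\Delta^*\pi_1^*L_j^{\pm}=\Delta^*\pi_2^*L_j^{\pm}$, tensoring by either $\pi_a^*L_j^{\pm}$ produces the same outer terms \emph{and} the same extension class, hence the same middle term. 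Your route can in principle be completed, but only by tracking that extension class through the pushforward --- which is exactly the step the paper's argument handles directly.
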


\begin{proof}
	We know that the kernel $\Hh_{i,1}1_{\kk}$ in the following exact triangle
	\[
	\Delta_{*}\V_{i}/\V_{i-1} \rightarrow \Hh_{i,1}\bo_{\kk} \rightarrow \Delta_{*}\V_{i+1}/\V_{i} 
	\] is determined by the element in $\Ext^1_{Y(\kk)\times Y(\kk)}(\Delta_{*}(\V_{i+1}/\V_{i}),\Delta_{*}(\V_{i}/\V_{i-1}))$.
	
	By Definition \ref{definition 2}, $\Psi_{j}^{+}\bo_{\kk} \coloneqq \Delta_{*}\tdet (\V_{j+1}/\V_{j})[1-k_{j+1}]$. Taking convolution, we get the following exact triangle
	\begin{align*}
	&\Delta_{*}(\V_{i}/\V_{i-1} \otimes \tdet (\V_{j+1}/\V_{j}))[1-k_{j+1}]  \rightarrow ( \Psi_{j}^{+} \ast \Hh_{i,1})\bo_{\kk} \rightarrow  \Delta_{*}(\V_{i+1}/\V_{i}) \otimes \tdet (\V_{j+1}/\V_{j}))[1-k_{j+1}] \\
	 &\Delta_{*}(\V_{i}/\V_{i-1} \otimes \tdet (\V_{j+1}/\V_{j}))[1-k_{j+1}]  \rightarrow (\Hh_{i,1} \ast \Psi_{j}^{+})\bo_{\kk} \rightarrow  \Delta_{*}(\V_{i+1}/\V_{i}) \otimes \tdet (\V_{j+1}/\V_{j}))[1-k_{j+1}].
	\end{align*}
	
	Since tensoring a line bundle does not change the class that determines the exact triangle, the above two exact triangles are still determined by the same element in $\Ext^1_{Y(\kk)\times Y(\kk)}(\Delta_{*}(\V_{i+1}/\V_{i}),\Delta_{*}(\V_{i}/\V_{i-1})) $.
	
	Thus we obtain the isomorphism $(\Hh_{i,1} \ast \Psi_{j}^{+})\bo_{\kk} \cong  ( \Psi_{j}^{+} \ast \Hh_{i,1})\bo_{\kk}$.
	
\end{proof}

Next, we prove condition (7), which is the following.

\begin{lemma} (Condition (7)).
	\begin{equation*}
	(\Hh_{i, 1} \ast \Hh_{j, 1})\bo_{\kk} \cong  ( \Hh_{j, 1} \ast \Hh_{i, 1})\bo_{\kk}
	\end{equation*} for all $i,j$.
\end{lemma}

\begin{proof}
	We know that $\Hh_{i, 1}\bo_{\kk}$ fits in the following exact triangle
	\begin{equation} \label{ses18}
	\Delta_{*}\V_{i}/\V_{i-1} \rightarrow \Hh_{i,1}\bo_{\kk} \rightarrow \Delta_{*}\V_{i+1}/\V_{i}.
	\end{equation}

	From Theorem \ref{theorem'}, $\Hh_{i,1}\bo_{\kk}$ is determined by an element in
	\begin{equation*}
	\Ext^1(\Delta_{*}(\V_{i+1}/\V_{i}),\Delta_{*}(\V_{i}/\V_{i-1})) \cong \Ext^1_{Y(\kk)}(\V_{i+1}/\V_{i},\V_{i}/\V_{i-1}) \bigoplus \Hom(\Omega_{Y(\kk)}, (\V_{i+1}/\V_{i})^{\vee} \otimes \V_{i}/\V_{i-1}),
	\end{equation*} and we denote this element to be $(0,a_{i})$.
	
	Taking convolution of the exact triangle (\ref{ses18}) with $\Hh_{j, 1}\bo_{\kk}$, we can obtain the following two exact triangles
	\begin{align*}
	&\Hh_{j, 1}\bo_{\kk} \ast \Delta_{*}\V_{i}/\V_{i-1} \rightarrow (\Hh_{j, 1}  \ast \Hh_{i,1})\bo_{\kk} \rightarrow \Hh_{j, 1}\bo_{\kk} \ast \Delta_{*}\V_{i+1}/\V_{i}, \\
	&\Delta_{*}\V_{i}/\V_{i-1} \ast \Hh_{j, 1}\bo_{\kk} \rightarrow (\Hh_{i,1} \ast \Hh_{j, 1} )\bo_{\kk} \rightarrow \Delta_{*}\V_{i+1}/\V_{i} \ast \Hh_{j, 1}\bo_{\kk}.
	\end{align*}
	
	Note that $\Hh_{j, 1}\bo_{\kk}$ is determined by the following exact triangle
	\begin{equation} \label{ses19}
	\Delta_{*}\V_{j}/\V_{j-1} \rightarrow \Hh_{j,1}\bo_{\kk} \rightarrow \Delta_{*}\V_{j+1}/\V_{j}
	\end{equation}
	
	Let (\ref{ses19}) convolute with $\Delta_{*}\V_{i}/\V_{i-1}$, we obtain the following exact triangles
	\begin{align*}
	&\Delta_{*}(\V_{i}/\V_{i-1}\otimes \V_{j}/\V_{j-1}) \rightarrow \Delta_{*}\V_{i}/\V_{i-1} \ast \Hh_{j,1}\bo_{\kk}\rightarrow \Delta_{*}(\V_{i}/\V_{i-1} \otimes \V_{j+1}/\V_{j}), \\
	&\Delta_{*}(\V_{j}/\V_{j-1} \otimes \V_{i}/\V_{i-1}) \rightarrow \Hh_{j,1}\bo_{\kk} \ast \Delta_{*}\V_{i}/\V_{i-1}  \rightarrow \Delta_{*}(\V_{j+1}/\V_{j} \otimes \V_{i}/\V_{i-1}).
	\end{align*}
	
	We show that $\Delta_{*}\V_{i}/\V_{i-1} \ast \Hh_{j,1}\bo_{\kk} \cong \Hh_{j,1}\bo_{\kk} \ast \Delta_{*}\V_{i}/\V_{i-1}$. Note that we have the following diagram for morphism of exact triangles
	\begin{equation} \label{diagram 8}
	\xymatrix{
		\Hh_{j,1}\bo_{\kk} \ast \Delta_{*}\V_{i}/\V_{i-1} \ar[r]
		& \Delta_{*}(\V_{i}/\V_{i-1} \otimes \V_{j+1}/\V_{j}) \ar[r] \ar[d]^{id} 
		&\Delta_{*}(\V_{i}/\V_{i-1}\otimes \V_{j}/\V_{j-1})[1] \ar[d]^{id[1]} \\
		\Delta_{*}\V_{i}/\V_{i-1} \ast \Hh_{j,1}\bo_{\kk}  \ar[r]  
		& \Delta_{*}(\V_{i}/\V_{i-1} \otimes \V_{j+1}/\V_{j}) \ar[r]
		&\Delta_{*}(\V_{i}/\V_{i-1}\otimes \V_{j}/\V_{j-1})[1].
	}
	\end{equation}
	
	In order to induce a map between $\Delta_{*}\V_{i}/\V_{i-1} \ast \Hh_{j,1}\bo_{\kk}$ and $\Hh_{j,1}\bo_{\kk} \ast \Delta_{*}\V_{i}/\V_{i-1}$, we need to show that the right square  in diagram (\ref{diagram 8}) is commutative. This can be done by using the same argument as in the proof of Theorem \ref{theorem 3}. Thus it induces a map $x:\Hh_{j,1}\bo_{\kk} \ast \Delta_{*}\V_{i}/\V_{i-1} \rightarrow \Delta_{*}\V_{i}/\V_{i-1} \ast \Hh_{j,1}\bo_{\kk} $. By taking cones and complete diagram (\ref{diagram 8}) to be morphisms between exact triangles, we see that $x$ is an isomorphism.
	
	Similarly, we also obtain an isomorphism $y:\Hh_{j,1}\bo_{\kk} \ast \Delta_{*}\V_{i+1}/\V_{i} \rightarrow \Delta_{*}\V_{i+1}/\V_{i} \ast \Hh_{j,1}\bo_{\kk}$ by using the same argument.
	
	Then we form the following diagram
	\begin{equation} \label{diagram6}
	\xymatrixcolsep{3pc}
	\xymatrix{
		(\Hh_{j, 1}  \ast \Hh_{i,1})\bo_{\kk}  \ar[r]
		& \Hh_{j,1}\bo_{\kk} \ast \Delta_{*}\V_{i+1}/\V_{i}  \ar[r]^{\Hh_{j,1} \ast (0,a_{i})} \ar[d]^{y} 
		&\Hh_{j, 1}\bo_{\kk} \ast \Delta_{*}\V_{i}/\V_{i-1}[1] \ar[d]^{x[1]} \\
		(\Hh_{i, 1}  \ast \Hh_{j,1})\bo_{\kk} \ar[r]  
		& \Delta_{*}\V_{i+1}/\V_{i} \ast \Hh_{j,1}\bo_{\kk}  \ar[r]^{(0,a_{i}) \ast \Hh_{j,1}}
		& \Delta_{*}\V_{i}/\V_{i-1} \ast \Hh_{j,1}\bo_{\kk}[1].
	}
	\end{equation}
	
	Again, by using the same argument in the proof of Theorem \ref{theorem 3}, we can show that the right square in diagram (\ref{diagram6}) is commutative. Thus there is an induced map $z:(\Hh_{j, 1}  \ast \Hh_{i,1})\bo_{\kk} \rightarrow (\Hh_{i, 1}  \ast \Hh_{j,1})\bo_{\kk}$. Since $x$ and $y$ are isomorphisms, using five lemma we obtain $z$ is an isomorphism. Hence $(\Hh_{j, 1}  \ast \Hh_{i,1})\bo_{\kk} \cong (\Hh_{i, 1}  \ast \Hh_{j,1})\bo_{\kk}$.

\end{proof}

The next conditions we prove are conditions (9b) and (10b).

\begin{lemma} (Conditions (9b) and (10b)).  \label{lemma 14} 
	We have the following exact triangle
	\begin{equation*}
	(\Ee_{i+1,s}\ast \Ee_{i,r+1})\bo_{\kk} \rightarrow (\Ee_{i+1,s+1}\ast \Ee_{i,r})\bo_{\kk} \rightarrow (\Ee_{i,r}\ast \Ee_{i+1,s+1})\bo_{\kk}.
	\end{equation*}
\end{lemma}

\begin{proof}
	A simple calculation gives that $(\Ee_{i+1,s+1}\ast \Ee_{i,r})\bo_{\kk}$ is given by
	\[
	i_{1*}((\V_{i}/\V''_{i})^{r} \otimes  (\V_{i+1}/\V''_{i+1})^{s+1})
	\] and $(\Ee_{i,r}\ast \Ee_{i+1,s+1})\bo_{\kk}$ is given by
	\[
	i_{2*}((\V_{i}/\V''_{i})^{r} \otimes  (\V_{i+1}/\V''_{i+1})^{s+1}).
	\]
	
	Here $i_{1}:W_{i+1,i}^{1,1}(\kk) \rightarrow Y(\kk) \times Y(\kk+\alpha_{i}+\alpha_{i+1})$, $i_{2}:W_{i,i+1}^{1,1}(\kk)  \rightarrow Y(\kk) \times Y(\kk+\alpha_{i}+\alpha_{i+1})$ are the natural inclusions with the subvarieties $W_{i+1,i}^{1,1}(\kk) \subset Y(\kk) \times Y(\kk+\alpha_{i}+\alpha_{i+1})$ and $W_{i,i+1}^{1,1}(\kk) \subset Y(\kk) \times Y(\kk+\alpha_{i}+\alpha_{i+1})$ given by
	\begin{align*}
	&W_{i+1,i}^{1,1}(\kk)=\{(V_{\bullet},V_{\bullet}'') \ | \ V''_{i} \subset V_{i}, \ V''_{i+1} \subset V_{i+1}, \ V_{j}=V''_{j} \ \text{for} \ j \neq i,i+1 \}, \\
	&W_{i,i+1}^{1,1}(\kk)=\{(V_{\bullet},V_{\bullet}'') \ | \ V''_{i} \subset V_{i} \subset V''_{i+1} \subset V_{i+1}, \ V_{j}=V''_{j} \ \text{for} \ j \neq i,i+1 \}.
	\end{align*}
	
	It is easy to see that $W_{i,i+1}^{1,1}(\kk) \subset W_{i+1,i}^{1,1}(\kk)$ is a divisor that cut out by the natural section of the line bundle $\mathcal{H}om(\V_{i}/\V''_{i},\V_{i+1}/\V''_{i+1})$. This implies that 
	\begin{equation*}
	\Oo_{W_{i+1,i}^{1,1}(\kk)}(W_{i,i+1}^{1,1}(\kk)) \cong (\V_{i}/\V''_{i})^{\vee} \otimes  \V_{i+1}/\V''_{i+1}.
	\end{equation*}
	
	From the divisor short exact sequence
	\[
	0 \rightarrow \Oo_{W_{i+1,i}^{1,1}(\kk)}(-W_{i,i+1}^{1,1}(\kk))  \cong  \V_{i}/\V''_{i}\otimes  (\V_{i+1}/\V''_{i+1})^{-1} \rightarrow \Oo_{W_{i+1,i}^{1,1}(\kk)} \rightarrow \Oo_{W_{i,i+1}^{1,1}(\kk)} \rightarrow 0.
	\]
	
	Tensoring it with $(\V_{i}/\V''_{i})^r \otimes  (\V_{i+1}/\V''_{i+1})^{s+1}$ we get the following exact triangle
	\begin{equation*}
    (\V_{i}/\V''_{i})^{r+1} \otimes  (\V_{i+1}/\V''_{i+1})^{s}
	\rightarrow  (\V_{i}/\V''_{i})^{r} \otimes  (\V_{i+1}/\V''_{i+1})^{s+1} \\
	\rightarrow \Oo_{W_{i,i+1}^{1,1}(\kk)}\otimes (\V_{i}/\V''_{i})^{r} \otimes  (\V_{i+1}/\V''_{i+1})^{s+1}.
	\end{equation*}
	
	Applying $i_{1*}$ and comparing the kernels, we get 
	\begin{equation*}
	 (\Ee_{i+1,s}\ast \Ee_{i,r+1})\bo_{\kk} \rightarrow (\Ee_{i+1,s+1}\ast \Ee_{i,r})\bo_{\kk} \rightarrow (\Ee_{i,r}\ast \Ee_{i+1,s+1})\bo_{\kk}.
	\end{equation*}
\end{proof}

Next, we verify conditions (11b), (12b). 

\begin{lemma} (Conditions (11b), (12b)). \label{lemma 15}
	\begin{equation*}
	(\Psi^{+}_{i} \ast\Ee_{i+1,r})\bo_{\kk} \cong  ( \Ee_{i+1,r-1} \ast \Psi^{+}_{i})\bo_{\kk}[1].
	\end{equation*}
\end{lemma}

\begin{proof}
	An easy calculation gives that $(\Psi^{+}_{i} \ast\Ee_{i+1,r})\bo_{\kk}$ is given by 
	\begin{equation*}
	\iota_{*}((\V_{i+1}/\V'_{i+1})^{r} \otimes \det(\V'_{i+1}/\V_{i}))[2-k_{i+1}].
	\end{equation*} while $(\Ee_{i+1,r-1}\ast \Psi^{+}_{i})\bo_{\kk}$ is given by 
	\begin{equation*}
	\iota_{*} (\V_{i+1}/\V'_{i+1})^{r-1} \otimes \det(\V_{i+1}/\V_{i})[1-k_{i+1}].
	\end{equation*}
	
	Here $\iota: W^{1}_{i+1}(\kk) \rightarrow Y(\kk) \times Y(\kk+\alpha_{i+1})$ is the inclusion. Note that the following short exact sequence
	\begin{equation*}
	0 \rightarrow \V'_{i+1}/\V_{i} \rightarrow \V_{i+1}/\V_{i} \rightarrow \V_{i+1}/\V'_{i+1} \rightarrow 0
	\end{equation*} gives that $\det(\V'_{i+1}/\V_{i}) \otimes \V_{i+1}/\V'_{i+1} \cong \det(\V_{i+1}/\V_{i})$.  
	
	Combining the above, we get  
	\[
	\iota_{*} (\V_{i+1}/\V'_{i+1})^{r} \otimes \det(\V'_{i+1}/\V_{i})[2-k_{i+1}]
	\cong \iota_{*} (\V_{i+1}/\V'_{i+1})^{r-1} \otimes \det(\V_{i+1}/\V_{i})[2-k_{i+1}]
	\]
	which implies that $(\Psi^{+}_{i} \ast\Ee_{i+1,r})\bo_{\kk} \cong (\Ee_{i+1,r-1}\ast \Psi^{+}_{i})\bo_{\kk}[1]$.
\end{proof}

Finally, we prove conditions (13b), (14b). 

\begin{lemma} (Conditions (13b), (14b)).  \label{lemma 16}
	We have the following exact triangles
	\begin{equation*}
	(\Hh_{i,1} \ast \Ee_{i+1,r})\bo_{\kk} \rightarrow (\Ee_{i+1,r} \ast \Hh_{i,1})\bo_{\kk} \rightarrow \Ee_{i+1,r+1}\bo_{\kk}.
	\end{equation*}
\end{lemma}

\begin{proof}
	First, we know that the kernel $\Hh_{i,1}\bo_{\kk}$ is determined by the exact triangle
	\begin{equation*}
	\Delta_{*} \V_{i}/\V_{i-1} \rightarrow \Hh_{i,1}\bo_{\kk} \rightarrow \Delta_{*} \V_{i+1}/\V_{i},
	\end{equation*} and we calculate the convolution $(\Ee_{i+1,r}\ast \Hh_{i,1} )\bo_{\kk}$ which is the following exact triangle
	\begin{equation} \label{ses15}
	\Ee_{i+1,r}\bo_{\kk} \ast (\Delta_{*} \V_{i}/\V_{i-1}) \rightarrow (\Ee_{i+1,r}\ast \Hh_{i,1} )\bo_{\kk} \rightarrow \Ee_{i+1,r}\bo_{\kk} \ast (\Delta_{*} \V_{i+1}/\V_{i}).
	\end{equation}
	
	We denote the elements in the space that using to calculate the convolution of kernels in (\ref{ses15}) as follows
	\[
	Y(\kk) \times Y(\kk) \times Y(\kk+\alpha_{i+1})=\{(V_{\bullet},V_{\bullet}'',V_{\bullet}')\}
	\] and $\Ee_{i+1,r}\bo_{\kk} \coloneqq \iota_{*} (\V''_{i+1}/\V'_{i+1})^{r}$ where $\iota:W^{1}_{i+1}(\kk) \rightarrow Y(\kk) \times Y(\kk+\alpha_{i+1})$ is the natural inclusion.
	
	A simple calculation gives us that the exact triangle (\ref{ses15}) becomes the following
	\[
	\iota_{*} (\V_{i}/\V_{i-1} \otimes (\V_{i+1}/\V'_{i+1})^{r}) \rightarrow (\Ee_{i+1,r}\ast \Hh_{i,1} )\bo_{\kk} \rightarrow \iota_{*} (\V_{i+1}/\V_{i} \otimes (\V_{i+1}/\V'_{i+1})^{r}).
	\]
	
	Next, using similar argument $(\Hh_{i,1}\ast \Ee_{i+1,r})\bo_{\kk}$ is in the following exact triangle
	\begin{equation} \label{ses16}
    \iota_{*} (\V_{i}/\V_{i-1} \otimes (\V_{i+1}/\V'_{i+1})^{r}) \rightarrow ( \Hh_{i,1} \ast \Ee_{i+1,r} )\bo_{\kk} \rightarrow \iota_{*} (\V'_{i+1}/\V_{i} \otimes (\V_{i+1}/\V'_{i+1})^{r}).
	\end{equation}
	
	Similarly, we denote the elements in the space that using to calculate the convolution of kernels in (\ref{ses16}) as follows
	\[
	Y(\kk) \times Y(\kk+\alpha_{i+1}) \times Y(\kk+\alpha_{i+1})=\{(V_{\bullet},V_{\bullet}''',V_{\bullet}')\}.
	\]

	On $W^{1}_{i+1}(\kk)$, we have the following short exact sequence
	\[
	0 \rightarrow \V'_{i+1}/\V_{i} \rightarrow \V_{i+1}/\V_{i} \rightarrow \V_{i+1}/\V'_{i+1} \rightarrow 0.
	\]
	
	Then we have the following diagram of exact triangles 
	\begin{equation*}
	\begin{tikzcd}
	\iota_{*} (\V_{i}/\V_{i-1} \otimes (\V_{i+1}/\V'_{i+1})^{r}) \arrow[d] \arrow[r] &(\Hh_{i,1} \ast \Ee_{i+1,r} )\bo_{\kk}  \arrow[r] &	\iota_{*} (\V'_{i+1}/\V_{i} \otimes (\V_{i+1}/\V'_{i+1})^{r})  \arrow[d] \\
	\iota_{*} (\V_{i}/\V_{i-1} \otimes (\V_{i+1}/\V'_{i+1})^{r}) \arrow[r] &(\Ee_{i+1,r}\ast \Hh_{i,1} )\bo_{\kk}  \arrow[r] &	\iota_{*}( \V_{i+1}/\V_{i} \otimes (\V_{i+1}/\V'_{i+1})^{r})  
	\end{tikzcd}
	\end{equation*} where the left vertical map is just the identity, and the right vertical map is induced by the inclusion map $\V'_{i+1}/\V_{i} \rightarrow \V_{i+1}/\V_{i}$.

	Using the same argument as in the proof of Theorem \ref{theorem 3}, there induces an map $(\Hh_{i,1} \ast \Ee_{i+1,r} )\bo_{\kk}  \rightarrow (\Ee_{i+1,r}\ast \Hh_{i,1} )\bo_{\kk}$. Finally, completing the diagram to exact triangles, we get 
	\begin{equation*}
	\begin{tikzcd}
	\iota_{*} (\V_{i}/\V_{i-1} \otimes (\V_{i+1}/\V'_{i+1})^{r}) \arrow[d] \arrow[r] &(\Hh_{i,1} \ast \Ee_{i+1,r} )\bo_{\kk}  \arrow[r] \arrow[d] &	\iota_{*} (\V'_{i+1}/\V_{i} \otimes (\V_{i+1}/\V'_{i+1})^{r})  \arrow[d] \\
	\iota_{*} (\V_{i}/\V_{i-1} \otimes (\V_{i+1}/\V'_{i+1})^{r})  \arrow[r] \arrow[d] &(\Ee_{i+1,r}\ast \Hh_{i,1} )\bo_{\kk}  \arrow[r] \arrow[d] &	\iota_{*} (\V_{i+1}/\V_{i} \otimes (\V_{i+1}/\V'_{i+1})^{r}) \arrow[d] \\
	0 \arrow[r] &  	\iota_{*} (\V_{i+1}/\V'_{i+1})^{r+1} \arrow[r] & 	\iota_{*} (\V_{i+1}/\V'_{i+1})^{r+1}
	\end{tikzcd}
	\end{equation*}
	and $\iota_{*}(\V_{i+1}/\V'_{i+1})^{r+1}=\Ee_{i+1,r+1}\bo_{\kk}$.
	
	Thus we get the exact triangle
	\[
	 (\Hh_{i,1} \ast \Ee_{i+1,r})\bo_{\kk} \rightarrow (\Ee_{i+1,r} \ast \Hh_{i,1})\bo_{\kk} \rightarrow \Ee_{i+1,r+1}\bo_{\kk}. 
	\]
\end{proof}

Combing the above results in this section and Theorem \ref{theorem 2}, we prove Theorem \ref{theorem 1}.

\subsection{Grothendieck groups and other (categorical) relations} \label{subsection 5.4}
In this subsection, we examine the categorical action of the shifted $q=0$ affine algebra that we construct in Theorem \ref{theorem 1} at the level of Grothendieck groups. 

By Theorem \ref{theorem 1}, we have a categorical action of $\dot{\Uu}_{0,N}(L\SL_{n})$ on $\bigoplus_{\kk}\Dd^b(Fl_{\kk}(\CC^N))$ where all the generators act on $\bigoplus_{\kk}\Dd^b(Fl_{\kk}(\CC^N))$ by using FM transforms with FM kernels defined in Definition \ref{definition 3}. Passing to the Grothendieck groups, we obtain an action of $\dot{\Uu}_{0,N}(L\SL_{n})$ on $\bigoplus_{\kk}K(Fl_{\kk}(\CC^N))$ and all the generators act on $\bigoplus_{\kk}K(Fl_{\kk}(\CC^N))$ by using K-theoretic FM transforms. For example,
\begin{equation*}
e_{i,r}1_{\kk}:K(Fl_{\kk}(\CC^N)) \rightarrow K(Fl_{\kk+\alpha_{i}}(\CC^N)), \ x \mapsto \pi_{2*}(\pi^{*}_{1}(x) \otimes [\iota(\kk)_{*}(\V_{i}/\V'_{i})^{r}])
\end{equation*} where we denote $[\iota(\kk)_{*}(\V_{i}/\V'_{i})^{r}]$ to be the class of $\iota(\kk)_{*}(\V_{i}/\V'_{i})^{r}$ in $K(Fl_{\kk}(\CC^N) \times Fl_{\kk+\alpha_{i}}(\CC^N))$. Moreover, all the isomorphisms in the categorical relations become equalities,  e.g. $(\Psi_{i}^{+} \ast \Ee_{i,r})\bo_{\kk} \cong (\Ee_{i,r} \ast \Psi_{i}^{+})\bo_{\kk}[-1]$ in Lemma \ref{lemma 3} becomes $\psi_{i}^{+}e_{i,r}1_{\kk}=-e_{i,r}\psi_{i}^{+}1_{\kk}$, which is one of the relation in (\ref{U06}). Among all the relations in $\dot{\Uu}_{0,N}(L\SL_{n})$, the most interesting one is the commutator relation, i.e. (\ref{U09}). We will mainly focus on understanding the commutator relation $[e_{i,r},f_{i,s}]1_{\kk}$ on $K(Fl_{\kk}(\CC^N))$. 

First, we restrict to the $\SL_{2}$ case, i.e. we consider the commutator relation $[e_{r},f_{s}]1_{(k,N-k)}$ on $K(\GG(k,N))$. From (\ref{U09}) we know that $[e_{r},f_{s}]1_{(k,N-k)}$ is equal to one of the following $0$, $\psi^{+}1_{(k,N-k)}$, $\psi^{+}h_{1}1_{(k,N-k)}$, $-\psi^{-}1_{(k,N-k)}$, and $-\psi^{-}h_{-1}1_{(k,N-k)}$ for $-k-1 \leq r+s \leq N-k+1$. Thus we have to know how $\psi^{\pm}1_{(k,N-k)}$ and $h_{\pm 1}1_{(k,N-k)}$ act on $K(\GG(k,N))$.

Since we know the action of $\psi^{\pm}1_{(k,N-k)}$ on derived categories from Definition \ref{definition 3}, it is easy to see that at the level of Grothendieck groups
\begin{align*}
&\psi^{+}1_{(k,N-k)}:K(\GG(k,N)) \rightarrow K(\GG(k,N)), \ x \mapsto x \otimes (-1)^{N-k-1}[\det(\CC^N/\V)], \\
&\psi^{-}1_{(k,N-k)}:K(\GG(k,N)) \rightarrow K(\GG(k,N)), \ x \mapsto x \otimes (-1)^{k-1}[\det(\V)^{-1}],
\end{align*} where $\V$ is the tautological bundle of rank $k$ on $\GG(k,N)$. While for $h_{\pm 1}1_{(k,N-k)}$, although it is not so directly to see the induced actions on $K(\GG(k,N))$, we can use Theorem \ref{theorem'} to help us.
More precisely, the FM kernel $\Hh_{1}\bo_{(k,N-k)}$ fits in the following exact triangle
\begin{equation*}
    \Delta_{*}\V \rightarrow \Hh_{1}\bo_{(k,N-k)} \rightarrow \Delta_{*}\CC^N/\V, 
\end{equation*} and passing to the Grothendieck group we have the following equality in $K(\GG(k,N) \times \GG(k,N))$
\begin{equation*}
[\Hh_{1}\bo_{(k,N-k)}]=[\Delta_{*}\V]+[\Delta_{*}\CC^N/\V]=[\Delta_{*}(\CC^N/\V\oplus\V)]=[\Delta_{*}\CC^N].
\end{equation*} Thus the action of $h_{1}1_{(k,N-k)}$ is given by 
\begin{equation*}
   h_{1}1_{(k,N-k)}:K(\GG(k,N)) \rightarrow K(\GG(k,N)), \ x \mapsto x \otimes [\CC^N] 
\end{equation*} or equivalently multiplication by $N$.

A similar argument also shows that 
$\Hh_{-1}\bo_{(k,N-k)}$ fits in the following exact triangle
\begin{equation*}
    \Delta_{*}(\CC^N/\V)^{\vee} \rightarrow \Hh_{-1}\bo_{(k,N-k)} \rightarrow \Delta_{*}(\V)^{\vee}, 
\end{equation*} and the action of $h_{-1}1_{(k,N-k)}$ is given by 
\begin{equation*}
   h_{-1}1_{(k,N-k)}:K(\GG(k,N)) \rightarrow K(\GG(k,N)), \ x \mapsto x \otimes [(\CC^N)^{\vee}]. 
\end{equation*}

Summarizing, we obtain the commutator relation
\begin{equation*}
[e_{r},f_{s}]1_{(k,N-k)}= \begin{cases}
	\otimes (-1)^{N-k-1}[\det(\CC^N/\V)][\CC^N] & \text{if} \  r+s=N-k+1 \\
	\otimes (-1)^{N-k-1}[\det(\CC^N/\V)] & \text{if} \ r+s=N-k \\
	0 & \text{if} \  -k+1 \leq r+s \leq N-k-1 \\
	\otimes (-1)^{k}[\det(\V)^{-1}] & \text{if} \ r+s=-k \\
	\otimes (-1)^{k}[\det(\V)^{-1}][(\CC^N)^{\vee}]& \text{if} \ r+s=-k-1
	\end{cases}.
\end{equation*}

Next, note that although the presentation of $\dot{\Uu}_{0,N}(L\SL_{2})$ (or more generally $\dot{\Uu}_{0,N}(L\SL_{n})$) we choose to work makes the generators $e_{r}1_{(k,N-k)}$, $f_{s}1_{(k,N-k)}$ defined only for $r$, $s$ within certain ranges (and the main reason is to give a definition of its categorical action), when we consider its action on $K(\GG(k,N))$ or $\Dd^b(\GG(k,N))$, it is still natural to consider $e_{r}1_{(k,N-k)}$, $f_{s}1_{(k,N-k)}$ for $r,\ s \in \ZZ$. 

We try to understand the categorical commutator relation between ${\E}_{r}{\F}_{s}\bo_{(k,N-k)}$ and ${\F}_{s}{\E}_{r}\bo_{(k,N-k)}$ (or more precisely $(\Ee_{r} \ast \Ff_{s})\bo_{(k,N-k)}$ and $(\Ff_{s} \ast \Ee_{r})\bo_{(k,N-k)}$) say, for $r+s \geq N-k+2$. Similarly like Proposition \ref{proposition 5}, it suffices to compare $(\Ee_{0} \ast \Ff_{s})\bo_{(k,N-k)}$ and $(\Ff_{s} \ast \Ee_{0})\bo_{(k,N-k)}$  for $s \geq N-k+2$. 

Consider first $s=N-k+2$, then like the exact triangle (\ref{ses10}) we have the following exact triangle
\begin{equation}  \label{ses'} 
	p_{*} (\V''/\V''')^{N-k+2} \rightarrow p_{*}(\V'/\V)^{N-k+2} \rightarrow p_{*}(\Oo_{(N-k+2)D} \otimes (\V'/\V)^{N-k+2})
\end{equation} by applying $p_{*}$ to (\ref{ses8}) with $n= N-k+2$. Again, a similar argument as in the proof of Proposition \ref{proposition 5} tells us that 
\begin{equation*}
    p_{*}(\Oo_{(N-k+2)D} \otimes (\V'/\V)^{N-k+2}) \cong p_{*}(\Oo_{(N-k+1)D} \otimes (\V'/\V)^{N-k+2}) \cong ...
    \cong p_{*}(\Oo_{3D} \otimes (\V'/\V)^{N-k+2}),
\end{equation*} and applying $t_{*}$ to (\ref{ses'}) we obtain the following exact triangle 
\begin{equation}
    (\Ff_{N-k+2} \ast \Ee_{0})\bo_{(k,N-k)} \rightarrow (\Ee_{0} \ast \Ff_{N-k+2})\bo_{(k,N-k)}  \rightarrow t_{*}p_{*}(\Oo_{3D} \otimes (\V'/\V)^{N-k+2}). \label{ocr}
\end{equation}

Thus the first question is we have to study $t_{*}p_{*}(\Oo_{3D} \otimes (\V'/\V)^{N-k+2})$. Tensoring (\ref{ses9}) with $n=3$ by $(\V'/\V)^{N-k-1}$, we obtain 
\begin{equation*}
\Oo_{D} \otimes (\V'/\V)^{N-k} \otimes (\V/\V''')^{2}  \rightarrow  \Oo_{3D}\otimes (\V'/\V)^{N-k+2} \rightarrow \Oo_{2D}  \otimes  (\V'/\V)^{N-k+2}
\end{equation*} applying $t_{*}p_{*}$, using the projection formula and Proposition \ref{proposition 3}, we get the following exact triangle
\begin{equation} \label{et1}
    \Delta_{*}\Sym^{2}(\V) \otimes \det(\CC^N/\V)[1+k-N]\rightarrow t_{*}p_{*}(\Oo_{3D}\otimes(\V'/\V)^{N-k+2}) \rightarrow  t_{*}p_{*}(\Oo_{2D}\otimes(\V'/\V)^{N-k+2}).
\end{equation} 

Next, tensoring  (\ref{ses9}) with $n=2$ by $(\V'/\V)^{N-k}$, we obtain 
\begin{equation*}
\Oo_{D} \otimes (\V'/\V)^{N-k+1} \otimes (\V/\V''')  \rightarrow \Oo_{2D}\otimes (\V'/\V)^{N-k+2} \rightarrow \Oo_{D}\otimes  (\V'/\V)^{N-k+2}
\end{equation*} again applying $t_{*}p_{*}$, using the projection formula and Proposition \ref{proposition 3}, we get the following exact triangle
\begin{align} \label{et2}
    \begin{split}
    &\Delta_{*}\V \otimes \CC^N/\V \otimes\det(\CC^N/\V)[1+k-N]\rightarrow t_{*}p_{*}(\Oo_{2D}\otimes(\V'/\V)^{N-k+2})  \\
    &\rightarrow  \Delta_{*}\Sym^{2}(\CC^N/\V) \otimes \det(\CC^N/\V)[1+k-N].
    \end{split}
\end{align}

Thus, understanding $t_{*}p_{*}(\Oo_{3D} \otimes (\V'/\V)^{N-k+2})$ is equivalent to understanding the two exact triangles (\ref{et1}) and (\ref{et2}). Moreover, if we define 
\begin{equation} \label{h_2}
\Hh_{2}\bo_{(k,N-k)}:=(\Psi^{+}\bo_{(k,N-k)})^{-1} \ast [t_{*}p_{*}(\Oo_{3D} \otimes (\V'/\V)^{N-k+2})] \in \Dd^b(\GG(k,N)\times \GG(k,N))
\end{equation} which can be thought as a FM kernel for a certain (undefined) functor $\Hhh_{2}\bo_{(k,N-k)}$. Then $\Hh_{2}\bo_{(k,N-k)}$ is build up from the following two exact triangles in $\Dd^b(\GG(k,N) \times \GG(k,N))$
\begin{align}
&\Delta_{*}\Sym^{2}(\V) \rightarrow \Hh_{2}\bo_{(k,N-k)} \rightarrow  (\Psi^{+}\bo_{(k,N-k)})^{-1} \ast [t_{*}p_{*}(\Oo_{2D}\otimes(\V'/\V)^{N-k+2})], \label{ett1} \\
&\Delta_{*}\V \otimes \CC^N/\V \rightarrow (\Psi^{+}\bo_{(k,N-k)})^{-1} \ast [t_{*}p_{*}(\Oo_{2D}\otimes(\V'/\V)^{N-k+2})] \rightarrow  \Delta_{*}\Sym^{2}(\CC^N/\V). \label{ett2}
\end{align}

Another natural question to ask is what is the categorical commutator relation between ${\Hhh}_{2}{\E}_{r}\bo_{(k,N-k)}$ and ${\E}_{r}{\Hhh}_{2}\bo_{(k,N-k)}$ (or exact triangle relates $\Hh_{2} \ast {\Ee}_{r}\bo_{(k,N-k)}$ and ${\Ee}_{r} \ast \Hh_{2}\bo_{(k,N-k)}$). Due to the difficulty and complication, we will like to address those questions in the future and similar for other comparisons between ${\Hhh}_{n}{\E}_{r}\bo_{(k,N-k)}$ and ${\E}_{r}{\Hhh}_{n}\bo_{(k,N-k)}$ with $n \geq 3$.

However, the above discussions can help us to understand what exactly are the commutators for the loop generators $e_{r}1_{(k,N-k)}$, $f_{s}1_{(k,N-k)}$ at the level of Grothendieck groups. From (\ref{ett1}) and (\ref{ett2}), we have the following equality in $K(\GG(k,N) \times \GG(k,N))$
\begin{align*}
[\Hh_{2}\bo_{(k,N-k)}] &= [\Delta_{*}\Sym^{2}(\V)] + [(\Psi^{+}\bo_{(k,N-k)})^{-1} \ast t_{*}p_{*}(\Oo_{2D}\otimes(\V'/\V)^{N-k+2}) ] \\
&=[\Delta_{*}\Sym^{2}(\V)] + [\Delta_{*}\V \otimes \CC^N/\V] +[\Delta_{*}\Sym^{2}(\CC^N/\V)] \\
& = [\Sym^2(\V \oplus \CC^N/\V)]=[\Sym^{2}(\CC^N)].
\end{align*}  Thus by the definition of $\Hh_{2}\bo_{(k,N-k)}$ and the exact triangle (\ref{ocr}), we get that at the level of Grothendieck group 
\begin{equation*}
    [e_{r},f_{s}]1_{(k,N-k)}=\otimes (-1)^{N-k-1} [\Sym^2(\CC^N)][\det(\CC^N/\V)], \ \text{if} \ r+s=N-k+2.
\end{equation*}

We can generalize this to other cases where $r+s \geq N-k+3$ and similarly $r+s \leq -k-2$ by using the same argument as above. As a conclusion, we have the following result which stated as a corollary.

\begin{corollary} \label{corollary cr1}
The commutator relations at the level of Grothendieck groups $K(\GG(k,N))$ for all $e_{r}1_{(k,N-k)}$, $f_{s}1_{(k,N-k)}$ with $r,s \in \ZZ$ is given by 
\begin{equation*}
[e_{r},f_{s}]1_{(k,N-k)}= \begin{cases}
	\otimes (-1)^{N-k-1}[\det(\CC^N/\V)][\Sym^{r+s-N+k}(\CC^N)] & \text{if} \  r+s \geq N-k \\
	0 & \text{if} \  -k+1 \leq r+s \leq N-k-1 \\
	\otimes (-1)^{k}[\det(\V)^{-1}][\Sym^{-r-s-k}(\CC^N)^{\vee}]& \text{if} \ r+s \leq -k
	\end{cases}.
\end{equation*}
\end{corollary}

As a direct generalization to the $\SL_{n}$ case, we obtain 

\begin{corollary}\label{corollary cr2}
The commutator relations at the level of Grothendieck groups $K(Fl_{\kk}(\CC^N))$ for all $e_{i,r}1_{\kk}$, $f_{i,s}1_{\kk}$ with $r,s \in \ZZ$ is given by 
\begin{equation*}
[e_{i,r},f_{i,s}]1_{\kk}= \begin{cases}
	\otimes (-1)^{k_{i+1}-1}[\det(\V_{i+1}/\V_{i})][\Sym^{r+s-k_{i+1}}(\V_{i+1}/\V_{i-1})] & \text{if} \  r+s \geq k_{i+1} \\
	0 & \text{if} \  -k_{i}+1 \leq r+s \leq k_{i+1}-1 \\
	\otimes (-1)^{k_{i}}[\det(\V_{i}/\V_{i-1})^{-1}][\Sym^{-r-s-k_{i}}(\V_{i+1}/\V_{i-1})^{\vee}]& \text{if} \ r+s \leq -k_{i}
	\end{cases}.
\end{equation*}
\end{corollary}

Finally, we give a remark.

\begin{remark}
It would be interesting to find an integral basis for $K(\GG(k,N))$ (or $K(Fl_{\kk}(\CC^N))$) and compute the matrix elements of the generators $e_{r}1_{(k,N-k)}$, $f_{s}1_{(k,N-k)}$ (or $e_{i,r}1_{\kk}$, $f_{i,s}1_{\kk}$).
\end{remark}

\section{Categorical action of the $q=0$ affine Hecke algebras} \label{section 6}

In this final section, we provide an application of the categorical action of the shifted $q=0$ affine algebra (more precisely Theorem \ref{theorem 1}), which is the categorical action of the $q=0$ affine Hecke algebra on the derived category of coherent sheaves on the full flag variety. The main idea is to interpret the Demazure operators as elements in the shifted $q=0$ affine algebra.

\subsection{$q=0$ affine Hecke algebras}

We work on the type $A$ case, and begin with the definition of the affine Hecke algebras.

\begin{definition} \label{definition 4} 
	Let $q \in \CC^*$ with $q \neq 1$. The affine Hecke algebra $\Hh_{N}(q)$ is defined to be the unital associative $\CC$-algebra generated by the elements $T_{1},...,T_{N-1},X_{1}^{\pm 1},...,X_{N}^{\pm 1}$ subject to the following relations
	\begin{equation}  \tag{H1}   \label{H1}
	(T_i-q)(T_{i}+1)=0,
	\end{equation}
	\begin{equation}  \tag{H2}   \label{H2}
	T_{i}T_{j}=T_{j}T_{i} \ \text{if} \ |i-j| \geq 2,
	\end{equation}
	\begin{equation}  \tag{H3}   \label{H3}
	T_{i}T_{j}T_{i}=T_{j}T_{i}T_{j} \ \text{if} \ |i-j|=1,
	\end{equation}
	\begin{equation}  \tag{H4}   \label{H4}
	X_{i}X_{i}^{-1}=X_{i}^{-1}X_{i}=1,
	\end{equation}
	\begin{equation}  \tag{H5}   \label{H5}
	X_{i}X_{j}=X_{j}X_{i} \ \text{for all } i,j,
	\end{equation}
	\begin{equation}  \tag{H6}   \label{H6}
	T_{i}X_{j}=X_{j}T_{i} \ \text{if} \ j \neq i,i+1,
	\end{equation}
	\begin{equation}  \tag{H7}   \label{H7}
	T_{i}X_{i}=X_{i+1}T_{i}-(q-1)X_{i+1},
	\end{equation}
	\begin{equation}  \tag{H8}   \label{H8}
	T_{i}X_{i+1}=X_{i}T_{i}+(q-1)X_{i+1}.
	\end{equation}
\end{definition}

\begin{remark}
Note that the affine Hecke algebra $\Hh_{N}(q)$ can be defined over $\CC[q,q^{-1}]$ for $q$ an indeterminate.
\end{remark}

Then we define the $q=0$ affine Hecke algebra.

\begin{definition} \label{definition 5} 
	The $q=0$ affine Hecke algebra $\Hh_{N}(0)$ is defined to be the unital associative $\CC$-algebra generated by the elements $T_{1},...,T_{N-1},X_{1}^{\pm 1},...,X_{N}^{\pm 1}$ subject to the following relations
	\begin{equation}  \tag{H01}   \label{H01}
	T_{i}^2=T_{i},
	\end{equation}
	\begin{equation}  \tag{H02}   \label{H02}
	T_{i}T_{j}=T_{j}T_{i}, \ \text{if} \ |i-j| \geq 2,
	\end{equation}
	\begin{equation}  \tag{H03}   \label{H03}
	T_{i}T_{j}T_{i}=T_{j}T_{i}T_{j}, \ \text{if} \ |i-j|=1,
	\end{equation}
	\begin{equation}  \tag{H04}   \label{H04}
	X_{i}X_{i}^{-1}=X_{i}^{-1}X_{i}=1,
	\end{equation}
	\begin{equation}  \tag{H05}   \label{H05}
	X_{i}X_{j}=X_{j}X_{i}, \ \text{for all } i,j,
	\end{equation}
	\begin{equation}  \tag{H06}   \label{H06}
	T_{i}X_{j}=X_{j}T_{i} \ \text{if} \ j \neq i,i+1,
	\end{equation}
	\begin{equation}  \tag{H07}   \label{H07}
	X_{i+1}T_{i}=T_{i}X_{i}+X_{i+1},
	\end{equation}
	\begin{equation}  \tag{H08}   \label{H08}
	X_{i}T_{i}=T_{i}X_{i+1}-X_{i+1}.
	\end{equation}
\end{definition}

We give some remarks about the $q=0$ affine Hecke algebra.

\begin{remark}
The $q=0$ affine Hecke algebra is the affine Hecke algebra with variable $q$ specialized at $0$. Note that Here we make the relations simple by twisting the automorphism $T_{i} \mapsto -T_{i}$ and still denote the generators $T_{i}$ by abusing notations. For example, the relation (\ref{H01}) is the quadratic relation (\ref{H1})  $(T_{i}+q)(T_{i}-1)=0$ at $q=0$. 
\end{remark}

\begin{remark}
Note that there is also an algebra called \textit{the (affine) asymptotic Hecke algebra} that defined by Lusztig \cite{Lu2} which can also be thought as a limit of the affine Hecke algebra when $q$ tends to $0$. However, the definition is much more complicated since it uses the theory Kazhdan-Lusztig basis \cite{KL-1}. It would be interesting to see the relations between these two algebras.
\end{remark}

Next we recall the action of $\Hh_{N}(0)$ on the Grothendieck group of the full flag variety. Let $G=\SLL_{N}(\CC)$ and $B \subset G$ be the Borel of upper triangular matrices. Then the full flag variety is defined by
\begin{equation} \label{eq ffl}
G/B:=\{0 \subset V_{1} \subset V_{2} \subset ... \subset V_{N}=\CC^N \ | \ \dim V_{k}=k \ \text{for} \ \text{all} \ k  \}
\end{equation} and similarly we have the partial flag varieties 
\begin{equation} \label{eq pfl}
G/P_{i}:=\{0 \subset V_{1} \subset V_{2} \subset ...V_{i-1} \subset V_{i+1} \subset ... V_{N}=\CC^N \ | \ \dim V_{k} =k \ \text{for} \ k \neq i \}	
\end{equation} where $P_{i}$ is a minimal parabolic subgroup for $1 \leq i \leq N-1$. 

We explain that there is an action of the $q=0$ affine Hecke algebra $\Hh_{N}(0)$ on the K-theory (or complexified Grothendieck group) of the full flag variety, i.e. $K(G/B)$.

The construction is as follows. On $G/B$, we denote $\V_{i}$ to be the tautological bundle of rank $i$. Let $a_{i}=[\V_{i}/\V_{i-1}]$ be the class of the tautological line bundle $\V_{i}/\V_{i-1}$ in $K(G/B)$, where $1 \leq i \leq N$. Note that since $\V_{i}/\V_{i-1}$ is a line bundle, it is invertible and thus $a^{-1}_{i}=[\V_{i}/\V_{i-1}]^{-1}$ exists in $K(G/B)$. 

$K(G/B)$ admits a presentation which is called the Borel presentation.
\begin{equation} \label{eq b}
K(G/B) \cong \CC[a_1^{\pm},...,a_N^{\pm}]/ \left \langle e_i- {N \choose i} \right \rangle	
\end{equation} where $\langle e_i- {N\choose i} \rangle$ is the ideal generated by $\{e_i- {N\choose i}\}_{i=1}^{N}$, and $e_i$ is the $i$-th symmetric polynomial in $a_1, a_2,...,a_{N}$.

We have the natural maps $\pi_{i}:G/B \rightarrow G/P_{i}$ by forgetting the $i$-dimensional vector spaces $V_{i}$ for all $ 1 \leq i \leq N-1$. It is easy to see that $\pi_{i}$ is a $\PP^1$-fibration for all $1 \leq i \leq N-1$.

Those maps induce maps between the Grothendieck groups, they are the pushforwards $\pi_{i_*}:K(G/B) \rightarrow K(G/P_{i})$ and pullbacks $\pi_{i}^*:K(G/P_{i}) \rightarrow K(G/B)$, $1 \leq i \leq N-1$. The fundamental construction of the push-pull operators $T_{i}:=\pi_{i}^*\pi_{i_*}:K(Fl) \rightarrow K(Fl)$ for all $1 \leq i \leq N-1$. Those operators $\{T_{i}\}^{N-1}_{i=1}$ are called the Demazure operators (or divided difference operators). Under the presentation (\ref{eq b}), $T_{i}$ has the following explicit description
\begin{equation*}
T_{i}=\frac{a_{i+1}-a_is_{i}}{a_{i+1}-a_i}	
\end{equation*} where $s_i$ is the simple reflection that permute $a_i$ and $a_{i+1}$ for all $ 1 \leq i \leq N-1$. Finally, we define the operators $X_{j}:K(G/B) \rightarrow K(G/B)$ to be multiplication by $a_{j}$ for all $1 \leq j \leq N$.

Then it is easy to check that those operators $T_{i}$, $X_{j}^{\pm 1}$ defined in such way satisfy the relations of the $q=0$ affine Hecke algebra $\Hh_{N}(0)$. Thus we get 
an action of $\Hh_{N}(0)$ on $K(G/B)$.

It is natural to lift this action to the categorical level. More precisely, we replace $K(G/B)$ by the derived category of coherent sheaves $\Dd^b(G/B)$ and $T_{i}$, $X_{j}$ by certain functors $\TT_{i}$, $\mathbb{X}_{j}$, resp.

So we need to define our functors $\TT_{i}:\Dd^b(G/B) \rightarrow \Dd^b(G/B)$ and $\mathbb{X}_{j}:\Dd^b(G/B) \rightarrow \Dd^b(G/B)$ that satisfy the relations in the categorical setting. Again, we would use the tools of FM transforms/kernels from Section \ref{section 4} to define the categorical action.

From the definition in K-theory we have $T_{i}:=\pi_{i}^*\pi_{i_*}$, and we know that they are defined by using the forgetting maps $\pi_{i}:G/B \rightarrow G/P_{i}$. Since $\pi_{i}$ also induce functors on derived categories of coherent sheaves, i.e. the derived pushforward $\pi_{i*}:\Dd^b(G/B) \rightarrow \Dd^b(G/P_{i})$ and the derived pullback $\pi_{i}^{*}:\Dd^b(G/P_{i}) \rightarrow \Dd^b(G/B)$,  it is natural to define the functors $\TT_{i}:=\pi_{i}^*\pi_{i_*}:\Dd^b(G/B) \rightarrow \Dd^b(G/B)$ for all $1 \leq i \leq N-1$.

In fact, $\TT_{i}$ are also FM transforms. Recall that for a morphism $f:X \rightarrow Y$ between smooth projective varieties, the derived pushforward $f_{*}:\Dd^b(X) \rightarrow \Dd^b(Y)$ is isomorphic to the FM transform with kernel $\Oo_{\Gamma_{f}}$, where $\Gamma_{f}=(id \times f)(X)$ is the graph of $f$ in $X \times Y$. Similarly, the derived pullback $f^*$ is isomorphic to the FM transform with kernel $\Oo_{(f \times id)(X)}$ in $\Dd^b(Y \times X)$. 

Thus a simple calculation of convolution of FM kernels shows that 
\begin{equation*}
\TT_{i} \cong \Phi_{\Oo_{(\pi_{i} \times id)(G/B)}} \circ \Phi_{\Oo_{(id \times \pi_i)(G/B)}} \cong   \Phi_{\Tt_{i}} 
\end{equation*} where $\Tt_{i} \cong \Oo_{G/B \times_{G/P_{i}} G/B}$ for all $1 \leq i \leq N-1$. 

The fibred product variety $G/B \times_{G/P_{i}} G/B$ is called the Bott-Samelson variety for $s_{i}$ (see Section 5.1 in \cite{AK2}). Thus the FM transform with kernel given by structure sheaf of the Bott-Samelson variety for $s_{i}$ categorifies the Demazure operator $T_{i}$ for all $1 \leq i \leq N-1$.

Next, we define the functors $\mathbb{X}_{j}$. Since on K-theory $X_{j}$ is the map given by multiplication with the element $a_{j}=[\V_{j}/\V_{j-1}]$, it is natural to define $\mathbb{X}_{j}$ to be the functor that given by tensoring the line bundle $\V_{j}/\V_{j-1}$. Thus its FM kernel is given by $\Xx_{j}=\Delta_{*}(\V_{j}/\V_{j-1}) \in \Dd^b(G/B \times G/B)$ for all $1\leq j \leq N$,  where $\Delta:G/B \rightarrow G/B \times G/B$ is the diagonal map. Since a line bundle is invertible, we can define its inverse functor $\mathbb{X}_{j}^{-1}$ with FM kernel  given by $\Xx_{j}^{-1}=\Delta_{*}((\V_{j}/\V_{j-1})^{-1}) \in \Dd^b(G/B \times G/B)$.

With all the above setting we can prove the following theorem, which says that there is an categorical action of the $q=0$ affine Hecke algebra on $\Dd^b(G/B)$. More precisely, we lift the relations (\ref{H01}), (\ref{H02}),..., (\ref{H08}) in Definition \ref{definition 5} to the categorical level, which are (\ref{6.1}), (\ref{6.2}), ..., (\ref{6.8}), respectively in Theorem \ref{theorem 4}. The proof will be given in the next section.

\begin{theorem} \label{theorem 4} 
	There is a categorical action of the $q=0$ affine Hecke algebra $\Hh_{N}(0)$ on $\Dd^b(G/B)$. More precisely, if we define the FM kernels $\Tt_{i}=\Oo_{G/B \times_{G/P_{i}} G/B}$ and $\Xx_{j}=\Delta_{*}(\V_{j}/\V_{j-1})$ for $1 \leq i \leq N-1$, $1 \leq j \leq N$, then we have the following categorical relations
	\begin{equation}  \label{6.1}
	\Tt_{i} \ast \Tt_{i} \cong \Tt_{i},
	\end{equation}
	\begin{equation}  \label{6.2}
	\Tt_{i}\ast \Tt_{j} \cong \Tt_{j}\ast \Tt_{i} \ \text{if} \ |i-j| \geq 2,
	\end{equation}
	\begin{equation}  \label{6.3}
	\Tt_{i}\ast \Tt_{j} \ast \Tt_{i} \cong \Tt_{j} \ast \Tt_{i} \ast \Tt_{j} \ \text{if} \ |i-j|=1,
	\end{equation}
	\begin{equation}  \label{6.4}
	\Xx_{i} \ast \Xx_{i}^{-1} \cong \Xx_{i}^{-1} \ast \Xx_{i} \cong \Oo_{\Delta},
	\end{equation}
	\begin{equation} \label{6.5}
	\Xx_{i} \ast \Xx_{j} \cong \Xx_{j}\ast \Xx_{i} \ \text{for all } i,j,
	\end{equation}
	\begin{equation} \label{6.6}
	\Tt_{i}\ast \Xx_{j}\cong \Xx_{j} \ast \Tt_{i} \ \text{if} \ j \neq i,i+1,
	\end{equation}
	We have the following exact triangles in $\Dd^b(G/B \times G/B)$
	\begin{equation}  \label{6.7}
	\Tt_{i}\ast \Xx_{i} \rightarrow \Xx_{i+1}\ast \Tt_{i} \rightarrow \Xx_{i+1}, 
	\end{equation}
	\begin{equation} \label{6.8}
	 \Xx_{i} \ast \Tt_{i} \rightarrow  \Tt_{i} \ast \Xx_{i+1}  \rightarrow \Xx_{i+1}.
	\end{equation}   
\end{theorem}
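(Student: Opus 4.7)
The plan is to verify each relation at the level of Fourier--Mukai kernels, using base change, transversality of the relevant incidence varieties, and Proposition~\ref{proposition 3} for derived pushforwards along projective bundles. Relations (\ref{6.4}) and (\ref{6.5}) are immediate from the identity $\Delta_*(A)\ast\Delta_*(B) \cong \Delta_*(A\otimes B)$ for sheaves on $Fl$. For (\ref{6.6}), when $j\neq i,i+1$ both $\V_{j-1}$ and $\V_j$ descend to $Fl_i$, so $\V_j/\V_{j-1}\cong \pi_i^{*}(\V_j/\V_{j-1}|_{Fl_i})$, and the projection formula yields commutativity of the functors $\pi_i^{*}\pi_{i*}$ and $\V_j/\V_{j-1}\otimes(-)$.

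Write $Z_i = Fl\times_{Fl_i}Fl$ and view $\Tt_i$ as $\iota_*\Oo_{Z_i}$ for $\iota:Z_i\hookrightarrow Fl\times Fl$. For (\ref{6.1}), a dimension count shows that $(Z_i\times Fl)\cap(Fl\times Z_i)\subset Fl^3$ is transverse of expected codimension and is isomorphic to $Fl\times_{Fl_i}Fl\times_{Fl_i}Fl$; the map to $Z_i$ that forgets the middle factor is a $\PP^1$-bundle, so Proposition~\ref{proposition 3} gives $\Tt_i\ast\Tt_i\cong \iota_*\Oo_{Z_i}=\Tt_i$. Relation (\ref{6.2}) is handled identically, with both sides computing to $\iota_*\Oo_{Z_{i,j}}$ for $Z_{i,j}=Fl\times_{Fl_{i,j}}Fl$ via the analogous transversal intersection.

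For (\ref{6.7}) and (\ref{6.8}), base change identifies $\Tt_i\ast\Xx_i\cong \iota_*(\V_i/\V_{i-1})$ and $\Xx_{i+1}\ast\Tt_i\cong \iota_*(\V_{i+1}/\V''_i)$, where $\V_\bullet$ and $\V''_\bullet$ denote the tautological flags from the two factors of $Z_i$ (so $\V_{i-1}=\V''_{i-1}$ and $\V_{i+1}=\V''_{i+1}$, but $\V_i$ and $\V''_i$ may differ). The composition
\[
\V_i/\V_{i-1}\;\hookrightarrow\;\V_{i+1}/\V_{i-1}\;\twoheadrightarrow\;\V_{i+1}/\V''_i
\]
is a morphism of line bundles on $Z_i$ whose zero locus is exactly the diagonal $\Delta\subset Z_i$, which is a divisor. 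A local computation in the $\PP^1\times\PP^1$-fibers of $Z_i\to Fl_i$ identifies $\Oo_{Z_i}(\Delta)\cong(\V_{i+1}/\V''_i)\otimes(\V_i/\V_{i-1})^{-1}$, so twisting the ideal-sheaf sequence $0\to\Oo_{Z_i}(-\Delta)\to\Oo_{Z_i}\to\Oo_\Delta\to 0$ by $\V_{i+1}/\V''_i$ yields the short exact sequence
\[
0\to \V_i/\V_{i-1}\to \V_{i+1}/\V''_i\to \iota_{\Delta*}(\V_{i+1}/\V_i)\to 0
\]
on $Z_i$. Pushing forward along $\iota$ and using $\iota\circ\iota_\Delta = \Delta_{Fl}$ produces (\ref{6.7}); triangle (\ref{6.8}) follows by the entirely analogous map $\V''_i/\V_{i-1}\to \V_{i+1}/\V_i$.

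The hard part will be the braid relation (\ref{6.3}). The plan is to compute both $\Tt_i\ast\Tt_{i+1}\ast\Tt_i$ and $\Tt_{i+1}\ast\Tt_i\ast\Tt_{i+1}$ as $\pi_{14*}$ of structure sheaves on Bott--Samelson-type iterated fibered products inside $Fl^4$, using transversality of the triple intersections to keep the derived intersections underived. Both of these iterated fibered products carry proper maps to $Fl\times_{Fl_{i,i+1}}Fl$, and the central task is to show that both derived direct images collapse to $\Oo_{Fl\times_{Fl_{i,i+1}}Fl}$; equivalently, the relevant towers of $\PP^1$-bundles must satisfy $R^{>0}\pi_*\Oo = 0$, so that the two push-forwards agree with the structure sheaf of the common target. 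This categorifies the identity $s_is_{i+1}s_i=s_{i+1}s_is_{i+1}$ in the symmetric group and is the step that will require the most care.
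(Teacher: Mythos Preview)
Your approach is correct but takes a genuinely different route from the paper's. The paper does \emph{not} compute any of these relations directly on the flag variety. Instead, it first observes that on the full-flag weight $\kk=(1,\dots,1)$ the Hecke kernels are compositions of the kernels already studied: $\Tt_i\cong(\Ee_i\ast\Ff_i)\bo_{(1,\dots,1)}\cong(\Ff_i\ast\Ee_i)\bo_{(1,\dots,1)}$ and $\Xx_j\cong\Psi^+_{j-1}\bo_{(1,\dots,1)}\cong(\Psi^-_j)^{-1}\bo_{(1,\dots,1)}$. Every relation is then deduced from the categorical $\dot{\Uu}_{0,N}(L\SL_n)$ action proved in Theorem~\ref{theorem 1}: (\ref{6.1}) from the idempotent property of Lemma~\ref{lemma 17}; (\ref{6.2}) and (\ref{6.6}) from the commutation relations in Definition~\ref{definition 2}; (\ref{6.7})--(\ref{6.8}) by reading off the exact triangle in Proposition~\ref{proposition 5} at $r+s=k_{i+1}$ after rewriting $\Ee_{i,1}$ via $\Psi^\pm_i$. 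The braid relation (\ref{6.3}) is handled purely algebraically: one rewrites $\Tt_i\ast\Tt_{i+1}\ast\Tt_i$ as a string of $\Ee$'s and $\Ff$'s, applies the categorical Serre relations (Lemma~\ref{lemma 18}) and the idempotency of Lemma~\ref{lemma 17} to permute and contract factors, and arrives at $\Tt_{i+1}\ast\Tt_i\ast\Tt_{i+1}$ without ever touching a Bott--Samelson variety.

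Your direct geometric argument is self-contained and does not depend on the heavy machinery of Section~\ref{section 5}; in particular your treatment of (\ref{6.7})--(\ref{6.8}) via the divisorial exact sequence on $Z_i$ is cleaner than unwinding Proposition~\ref{proposition 5}. The paper's approach, by contrast, makes the point that the Hecke action is a \emph{consequence} of the shifted $q=0$ affine algebra action, which is the thematic payoff of Section~\ref{section 6}. For (\ref{6.3}), your Bott--Samelson strategy is standard and will work, but you should state explicitly the input you are using: the two iterated fibre products are smooth and their maps to $Fl\times_{Fl_{i,i+1}}Fl$ are proper birational onto a Schubert variety, so $R\pi_*\Oo\cong\Oo$ follows from rational singularities of Schubert varieties (the same fact invoked in the paper's proof of Lemma~\ref{lemma 5}). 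With that citation in place your argument for (\ref{6.3}) is complete, and considerably shorter than the paper's chain of Serre-relation manipulations.
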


\begin{remark}
Theorem \ref{theorem 4} essentially categorifies the classical results of Lusztig \cite{Lu1} and Kazhdan-Lusztig \cite{KL} on representations of the affine Hecke algebra in K-theory. 
\end{remark}

\subsection{Application by using shifted $q=0$ affine algebra}

In this section, instead of proving Theorem  \ref{theorem 4} by direct computation of convolution of kernels, we use the theory of categorical action of the shifted $q=0$ affine algebra which developed in Section \ref{section3} and Section \ref{section 5} to help us prove the theorem.

In order to use the results of categorical action, i.e. Theorem \ref{theorem 1} and Definition \ref{definition 2},  the main idea is to interpret the Demazure operators in terms of elements in the shifted $q=0$ affine algebra.

Recall that the Demazure operators are given by $T_{i}:=\pi^{*}_{i}\pi_{i*}:K(G/B) \rightarrow K(G/B)$ where $\pi_{i}:G/B \rightarrow G/P_{i}$ is the natural $\PP^1$-fibration with $1 \leq i \leq N-1$.  With the notation of $n$-step partial flag variety (\ref{eq fl}), the full flag variety in (\ref{eq ffl}) can be written as $G/B=Fl_{(1,1,...,1)}(\CC^N)$. For the partial flag varieties in (\ref{eq pfl}), we observe that 
\begin{align*}
V_{i-1} \overset{2}{\subset} V_{i+1} 
& = \textcolor{red}{V_{i-1}} \overset{0}{\subset} \textcolor{red}{V_{i-1}} \overset{2}{\subset}  V_{i+1} \\
& = V_{i-1} \overset{2}{\subset} \textcolor{blue}{V_{i+1}} \overset{0}{\subset}  \textcolor{blue}{V_{i+1}}
\end{align*} where the number above the inclusions are jump of dimensions. Thus they can be written as $G/P_{i}=Fl_{(1,1,..,1)+\alpha_{i}}(\CC^N)=Fl_{(1,1,..,1)-\alpha_{i}}(\CC^N)$ where $1 \leq i \leq N-1$.

We have the following commutator diagram
\begin{equation*}
     \xymatrix{ 
        & T_{i} \ar@(dl,dr) \\
    	K(G/P_{i}=Fl_{{(1,1,...,1)-\alpha_{i}}}(\CC^N))   \ar@/^/[r]^{e_{i,r}}   
    	& K(G/B=Fl_{{(1,1,...,1)}}(\CC^N))   \ar@/^/[r]^{e_{i,r}}   \ar@/^/[l]^{f_{i,s}}
    	& K(G/P_{i}=Fl_{{(1,1,...,1)+\alpha_{i}}}(\CC^N))   \ar@/^/[l]^{f_{i,s}}  
       }
\end{equation*} where $e_{i,r}1_{(1,1,...,1)}$, $e_{i,r}1_{(1,1,...,1)-\alpha_{i}}$, $f_{i,s}1_{(1,1,...,1)}$, and $f_{i,s}1_{(1,1,...,1)+\alpha_{i}}$ are elements in $\dot{\Uu}_{0,N}(L\SL_{N})$. Thus we can try to interpret $T_{i}$ in terms of elements in $\dot{\Uu}_{0,N}(L\SL_{N})$.

Since we define both the categorical actions of $\dot{\Uu}_{0,N}(L\SL_{N})$ and the Demazure operators $T_{i}$ by using the language of FM transforms/kernels, to relate the generators in $\dot{\Uu}_{0,N}(L\SL_{N})$ to $T_{i}$, we have to compare their kernels.

By definition, we have the kernel $\Ee_{i,0}1_{(1,1,..,1)}=\iota_{*}\Oo_{W^{1}_{i}((1,1,...,1))}$, where 
\begin{equation*}
W^{1}_{i}((1,1,...,1)):=\{(V_{\bullet},V_{\bullet}') \in  Fl_{(1,1,..,1)}(\CC^N) \times Fl_{(1,1,..,1)+\alpha_{i}}(\CC^N) \ | \ V_{j}=V'_{j} \ \Rm{for} \ j \neq i, \ \Rm{and} \ V'_{i} \subset V_{i}\}
\end{equation*} and $\iota:W^{1}_{i}((1,1,...,1)) \rightarrow Fl_{(1,1,..,1)}(\CC^N) \times Fl_{(1,1,..,1)+\alpha_{i}}(\CC^N)=G/B \times G/P_{i}$ is the natural inclusion. Note that by definition $V'_{i}=V_{i-1}$, so the condition $V'_{i} \subset V_{i}$ automatically satisfy. Then $W^{1}_{i}((1,1,...,1))=(id \times \pi_i)(G/B)$. Thus we have $\Ee_{i,0}\bo_{(1,1,..,1)}=\iota_{*}\Oo_{(id \times \pi_i)(G/B)}$, which is the kernel for the derived pushforward $\pi_{i*}$. Using the same argument, we can show that $\Ff_{i,0}\bo_{(1,1,...,1)}$ is also the same as the kernel for $\pi_{i*}$ and $\Ee_{i,0}\bo_{(1,1,..,1)-\alpha_{i}} \cong \Ff_{i,0}\bo_{(1,1,..,1)+\alpha_{i}}$ is the same as the kernel for $\pi_{i}^*$.

Since the Demazure operators are defined as  $T_{i}:=\pi_{i}^*\pi_{i*}$, by the above argument, the FM kernel $\Tt_{i}$ is isomorphic to 
\begin{equation} \label{eq cd}
\Tt_{i} \cong (\Ee_{i,0} \ast \Ff_{i,0})\bo_{(1,1,...,1)} \cong (\Ff_{i,0} \ast \Ee_{i,0})\bo_{(1,1,...,1)}
\end{equation} for all $1 \leq i \leq N-1$. Note that the second isomorphism can be seen from Proposition \ref{proposition 5}. Next, we make a simple modification to the isomorphisms in (\ref{eq cd}) so that it become more "natural" (which will be explained later). From the categorical relations (12)(a) in Definition \ref{definition 2}, we have 
\begin{equation} \label{eq cd1}
    ((\Psi^{+}_{i})^{-1} \ast \Ff_{i,0}) \bo_{(1,1,...,1)}[1] \cong (\Ff_{i,1} \ast (\Psi^{+}_{i})^{-1})\bo_{(1,1,...,1)}.
\end{equation} Observe that by Definition \ref{definition 3}, the kernel $\Psi^{+}_{i}\bo_{(1,1,...,1)-\alpha_{i}}$ is given by $\Oo_{\Delta}[1]$ where $\Delta$ is the diagonal in $G/P_{i} \times G/P_{i}$. Thus the left hand side in (\ref{eq cd1}) is isomorphic to $\Ff_{i,0}\bo_{(1,1,...,1)}$ and we obtain 
\begin{equation} \label{eq cd2}
    \Ff_{i,0} \bo_{(1,1,...,1)} \cong (\Ff_{i,1} \ast (\Psi^{+}_{i})^{-1})\bo_{(1,1,...,1)}.
\end{equation} Similarly, using the categorical relations (11)(a) and the same argument, we obtain  
\begin{equation} \label{eq cd3}
    \Ee_{i,0} \bo_{(1,1,...,1)} \cong (\Ee_{i,-1} \ast (\Psi^{-}_{i})^{-1})\bo_{(1,1,...,1)}.
\end{equation} With (\ref{eq cd2}) and (\ref{eq cd3}), (\ref{eq cd}) can be written as 
\begin{equation} \label{eq cd4}
\Tt_{i} \cong (\Ee_{i,0} \ast \Ff_{i,1} \ast (\Psi^{+}_{i})^{-1})\bo_{(1,1,...,1)} \cong (\Ff_{i,0} \ast \Ee_{i,-1} \ast (\Psi^{-}_{i})^{-1})\bo_{(1,1,...,1)}.
\end{equation}
    
We prefer to use (\ref{eq cd4}) over (\ref{eq cd}) for relating categorical action of Demazure operators to the categorical action of $\dot{\Uu}_{0,N}(L\SL_{N})$. So after passing to the K-theory (decategorifying), we obtain 
\begin{equation*}
    T_{i}=e_{i,0}f_{i,1}(\psi^{+}_{i})^{-1}1_{(1,1,...,1)}=f_{i,0}e_{i,-1}(\psi^{-}_{i})^{-1}1_{(1,1,...,1)}
\end{equation*} for all $1 \leq i \leq N-1$, which interprets the Demazure operators in terms of elements in $\dot{\Uu}_{0,N}(L\SL_{N})$.

Finally, for the kernel $\Xx_{i}$, it is easy to see that $\Xx_{1} \cong (\Psi^{-}_{1})^{-1}\bo_{(1,1,...,1)}$, $\Xx_{i} \cong \Psi^{+}_{i-1}\bo_{(1,1,...,1)} \cong (\Psi^{-}_{i})^{-1}\bo_{(1,1,...,1)}$ for all $2 \leq i \leq N-1$, and $\Xx_{N} \cong \Psi^{+}_{N-1}\bo_{(1,1,...,1)}$. This implies that $X_{1}=(\psi^{-}_{1})^{-1}1_{(1,1,...,1)}$, $X_{i}=\psi^{+}_{i-1}\bo_{(1,1,...,1)}= (\psi^{-}_{i})^{-1}\bo_{(1,1,...,1)}$ for all $2 \leq i \leq N-1$, and $X_{N}=\psi^{+}_{N-1}\bo_{(1,1,...,1)}$.

We are in a position to prove the Theorem \ref{theorem 4}.





The next relation is the Serre relation. This can be deduced from the relations in the definition of categorical action, i.e. Definition \ref{definition 2}. The proof is leave to the readers.

\begin{lemma}\label{lemma 18}  
	\begin{align*}
	&(\Ee_{i+1,0} \ast \Ee_{i,0} \ast \Ee_{i+1,0})\bo_{\kk} \cong (\Ee_{i+1,0} \ast \Ee_{i+1,0} \ast \Ee_{i,0})\bo_{\kk}, \\
	&(\Ee_{i,0} \ast \Ee_{i+1,0} \ast \Ee_{i,0})\bo_{\kk} \cong (\Ee_{i+1,0} \ast \Ee_{i,0} \ast \Ee_{i,0})\bo_{\kk}, \\
	&(\Ff_{i+1,0} \ast \Ff_{i,0} \ast \Ff_{i+1,0})\bo_{\kk} \cong (\Ff_{i,0} \ast \Ff_{i+1,0} \ast \Ff_{i+1,0})\bo_{\kk}, \\
	&(\Ff_{i,0} \ast \Ff_{i+1,0} \ast \Ff_{i,0})\bo_{\kk} \cong (\Ff_{i,0} \ast \Ff_{i,0} \ast \Ff_{i+1,0})\bo_{\kk}.
	\end{align*}
\end{lemma}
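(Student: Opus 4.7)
My plan is to derive each of the four Serre-type isomorphisms by taking one of the quadratic exact triangles supplied by Definition \ref{definition 2} — namely (9b) for the $\Ee$-relations and (10b) for the $\Ff$-relations — composing it on one side with a single additional generator, and then using (9a) or (10a) to kill the third term of the resulting triangle. Throughout I interpret $\Ee_i$ as $\Ee_{i,0}$ and $\Ff_i$ as $\Ff_{i,0}$, matching the geometric definition in Definition \ref{definition 3}.

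For the first relation, specialize (9b) to $(r,s)=(0,-1)$ to obtain the exact triangle
\[
\Ee_{i+1,-1}\Ee_{i,1}\bo_{\kk} \to \Ee_{i+1,0}\Ee_{i,0}\bo_{\kk} \to \Ee_{i,0}\Ee_{i+1,0}\bo_{\kk}.
\]
Pre-composing with $\Ee_{i+1,0}$ on the left (an exact operation, since composition with a fixed 1-morphism preserves exact triangles) yields
\[
\Ee_{i+1,0}\Ee_{i+1,-1}\Ee_{i,1}\bo_{\kk} \to \Ee_{i+1,0}\Ee_{i+1,0}\Ee_{i,0}\bo_{\kk} \to \Ee_{i+1,0}\Ee_{i,0}\Ee_{i+1,0}\bo_{\kk}.
\]
The leftmost term vanishes by (9a) applied at the index $i+1$ with $r=s=-1$, which gives $\Ee_{i+1,0}\Ee_{i+1,-1}\cong 0$. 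Hence the remaining arrow is an isomorphism, which is exactly the first Serre relation.

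The second relation is obtained from the same (9b)-triangle by instead post-composing with $\Ee_{i,0}$ on the right; the leftmost term now contains $\Ee_{i,1}\Ee_{i,0}$, which is zero by (9a) at $r=s=0$. The third and fourth relations are completely parallel: begin with (10b) at $(r,s)=(0,-1)$, namely
\[
\Ff_{i,1}\Ff_{i+1,-1}\bo_{\kk} \to \Ff_{i,0}\Ff_{i+1,0}\bo_{\kk} \to \Ff_{i+1,0}\Ff_{i,0}\bo_{\kk},
\]
and respectively post-compose with $\Ff_{i+1,0}$ or pre-compose with $\Ff_{i,0}$; in either case (10a) applied at an appropriate $r=s$ annihilates the third term and produces the desired isomorphism.

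I do not anticipate any serious obstacle. The whole argument is a formal manipulation in the triangulated 2-category $\Kk$, using only relations (9a), (9b), (10a), (10b) of Definition \ref{definition 2} together with exactness of composition on one side by a fixed 1-morphism. The one mild point to check is that the chosen values $(r,s)=(0,-1)$ fall in the range for which the triangles of Definition \ref{definition 2} are posited, which causes no issue since those relations are stated without further restriction on $r,s$. No geometric input from Section \ref{section 5} is needed.
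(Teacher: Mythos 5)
Your argument is correct, and it is exactly the kind of deduction the paper intends: the paper states only that Lemma \ref{lemma 18} ``can be deduced from the relations in Definition \ref{definition 2}'' and leaves the proof to the reader, so there is no written proof to compare against. Your choice of $(r,s)=(0,-1)$ in (9b)/(10b), followed by composition with one more generator and the vanishing ${\E}_{i,r+1}{\E}_{i,r}\cong 0$ (resp. ${\F}_{i,r}{\F}_{i,r+1}\cong 0$) from (9a)/(10a), checks out in all four cases, and the exactness of composition with a fixed kernel is automatic in the geometric realization where the lemma is used. One small correction: in the ${\F}$ cases it is again the \emph{first} (leftmost) term of the composed triangle that vanishes --- e.g.\ $\Ff_{i,1}\Ff_{i+1,-1}\Ff_{i+1,0}$ dies because $\Ff_{i+1,-1}\Ff_{i+1,0}\cong 0$ --- not the third term as you wrote; the conclusion is unaffected.
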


Now we prove Theorem \ref{theorem 4}.
\begin{proof}[Proof of Theorem \ref{theorem 4}]
Now we know how to write the generators $T_{i},\ X_{j}$ for the $q=0$ affine Hecke algebra $\Hh_{N}(0)$ in terms of generators in the shifted $q=0$ affine algebra $\dot{\Uu}_{0,N}(L\SL_{N})$. We apply Definition \ref{definition 2} of the categorical action to deduce the categorical action of $\Hh_{N}(0)$.

First, we prove relation (\ref{6.1}). From (\ref{eq cd4}) we have  $\Tt_{i} \cong (\Ee_{i,0} \ast \Ff_{i,1} \ast (\Psi^{+}_{i})^{-1})\bo_{(1,1,...,1)} \cong (\Ff_{i,0} \ast \Ee_{i,-1} \ast (\Psi^{-}_{i})^{-1})\bo_{(1,1,...,1)}$. We  choose one of the isomorphism (the argument for the other is similar), say $\Tt_{i} \cong (\Ee_{i,0} \ast \Ff_{i,1} \ast (\Psi^{+}_{i})^{-1})\bo_{(1,1,...,1)}$. Then by using relation (11)(a) in Definition \ref{definition 2}, we obtain
\begin{align*}
\Tt_{i} \ast \Tt_{i} & \cong  (\Ee_{i,0} \ast \Ff_{i,1} \ast (\Psi^{+}_{i})^{-1} \ast \Ee_{i,0} \ast \Ff_{i,1} \ast (\Psi^{+}_{i})^{-1}) \bo_{(1,1,...,1)} \\
& \cong (\Ee_{i,0} \ast \Ff_{i,1} \ast \Ee_{i,-1} \ast (\Psi^{+}_{i})^{-1} \ast \Ff_{i,1} \ast (\Psi^{+}_{i})^{-1}) \bo_{(1,1,...,1)}[1] \\
& \cong (\Ee_{i,0} \ast \Psi^{+}_{i}[-1] \ast (\Psi^{+}_{i})^{-1} \ast \Ff_{i,1} \ast (\Psi^{+}_{i})^{-1}) \bo_{(1,1,...,1)}[1] \\
& \cong (\Ee_{i,0} \ast \Ff_{i,1} \ast (\Psi^{+}_{i})^{-1}) \bo_{(1,1,...,1)} \cong \Tt_{i}
\end{align*} where in the third isomorphism we use the categorical commutator relations (16)(b), which is $(\Ff_{i,1} \ast \Ee_{i,-1}) \bo_{(1,1,...,1)-\alpha_{i}} \cong \Psi^{+}_{i}[-1]$. 

Next for relation (\ref{6.2}), we use again an isomorphism, say $\Tt_{i} \cong (\Ee_{i,0} \ast \Ff_{i,1} \ast (\Psi^{+}_{i})^{-1})\bo_{(1,1,...,1)}$. Then it follows from relations (6), (9)(c), (10)(c), (11)(c), (12)(c), and (15) in Definition \ref{definition 2} of the categorical action.
	
For relations (\ref{6.4}) and (\ref{6.5}), they follow from the definition and relation (6) in Definition \ref{definition 2}. 

For relation (\ref{6.6}), let $j \neq i, i+1$ and we choose $\Tt_{i} \cong (\Ee_{i,0} \ast \Ff_{i,1} \ast (\Psi^{+}_{i})^{-1})\bo_{(1,1,...,1)}$. For $i+2 \leq j \leq N$, we use $\Xx_{j}   \cong \Psi^{+}_{j-1}\bo_{(1,1,...,1)}$
and the result follows from relations (6), 11(b)(c) and 12(b)(c) in Definition \ref{definition 2}. Similarly, for $1 \leq j \leq i-1$, we use $\Xx_{j} \cong (\Psi^{-}_{j})^{-1}\bo_{(1,1,...,1)}$ and the result also follows from relations (6), 11(b)(c) and 12(b)(c) in Definition \ref{definition 2}.
	
For relation (\ref{6.7}) and (\ref{6.8}),  by relations (16)(b)(c) (or Proposition \ref{proposition 5}), we have the following exact triangles
\begin{align} 
&(\Ff_{i,1} \ast \Ee_{i,0})\bo_{(1,1,...,1)} \rightarrow (\Ee_{i,0} \ast \Ff_{i,1})\bo_{(1,1,....,1)} \rightarrow \Psi_{i}^{+}\bo_{(1,1,...,1)}  \label{ses 17}, \\
&(\Ee_{i,-1} \ast \Ff_{i,0})\bo_{(1,1,....,1)} \rightarrow (\Ff_{i,0} \ast \Ee_{i,-1})\bo_{(1,1,...,1)} \rightarrow \Psi_{i}^{-}\bo_{(1,1,...,1)} \label{ses 18}.
\end{align}

If we choose $\Tt_{i} \cong (\Ee_{i,0} \ast \Ff_{i,1} \ast (\Psi^{+}_{i})^{-1})\bo_{(1,1,...,1)}$ and $\Xx_{i+1}   \cong \Psi^{+}_{i}\bo_{(1,1,...,1)}$, then we have $(\Ee_{i,0} \ast \Ff_{i,1})\bo_{(1,1,....,1)} \cong \Tt_{i} \ast \Xx_{i+1}$. On the other hand, using (\ref{eq cd}) we have $\Tt_{i} \cong (\Ff_{i,0} \ast \Ee_{i,0})\bo_{(1,1,...,1)}$. Choosing $\Xx_{i} \cong (\Psi^{-}_{i})^{-1}\bo_{(1,1,...,1)}$, we have 
\begin{align*}
\Xx_{i} \ast \Tt_{i} & \cong  ((\Psi^{-}_{i})^{-1} \ast \Ff_{i,0} \ast \Ee_{i,0})\bo_{(1,1,...,1)} \cong ( \Ff_{i,1} \ast (\Psi^{-}_{i})^{-1} \ast \Ee_{i,0})\bo_{(1,1,...,1)}[1] \\
& \cong (\Ff_{i,1}\ast \Ee_{i,0})\bo_{(1,1,...,1)}
\end{align*} where we use relation (12)(a) in the second isomorphism and the third isomorphism comes from the similar fact that the kernel $\Psi^{-}_{i}\bo_{(1,1,...,1)+\alpha_{i}}$ is given by $\Oo_{\Delta}[1]$. With these facts, the exact triangle (\ref{ses 17}) is precisely the following 
\begin{equation*}
\Xx_{i} \ast \Tt_{i} \rightarrow  \Tt_{i} \ast \Xx_{i+1} \rightarrow \Xx_{i+1}
\end{equation*} which is the relation (\ref{6.8}). A similar argument with the exact triangle (\ref{ses 18}) can get  relation (\ref{6.7}).
	
Finally, we prove the braid relation (\ref{6.3}). We prove the case for $j=i+1$, the case for $j=i-1$ is similar. Using (\ref{eq cd}), we have $\Tt_{i} \cong (\Ee_{i,0} \ast \Ff_{i,0})\bo_{(1,1,...,1)} \cong (\Ff_{i,0} \ast \Ee_{i,0})\bo_{(1,1,...,1)}$. Then we have
\begin{align}
\begin{split} \label{eq cb}
\Tt_{i}\ast \Tt_{i+1} \ast \Tt_{i} & \cong (\Ee_{i,0} \ast \Ff_{i,0} \ast \Ee_{i+1,0} \ast \Ff_{i+1,0} \ast \Ee_{i,0} \ast \Ff_{i,0}) \bo_{(1,1,...,1)} \\
& \cong (\Ee_{i,0} \ast  \Ee_{i+1,0} \ast \Ff_{i,0} \ast \Ee_{i,0} \ast  \Ff_{i+1,0} \ast  \Ff_{i,0}) \bo_{(1,1,...,1)} \ (\text{by} \ \text{relation (15)}) \\
& \cong (\Ee_{i,0} \ast  \Ee_{i+1,0} \ast \Ee_{i,0} \ast \Ff_{i,0} \ast  \Ff_{i+1,0} \ast  \Ff_{i,0}) \bo_{(1,1,...,1)} \ (\text{by} \ \text{relation (16)(e)}) \\
& \cong (\Ee_{i+1,0} \ast \Ee_{i,0} \ast \Ee_{i,0} \ast \Ff_{i,0} \ast \Ff_{i,0} \ast \Ff_{i+1,0})\bo_{(1,1,...,1)} \  (\text{by} \ \text{Lemma} \ \ref{lemma 18}) \\
& \cong (\Ee_{i+1,0} \ast \Ee_{i,0} \ast \Ff_{i,0} \ast \Ee_{i,0} \ast \Ff_{i,0} \ast \Ff_{i+1,0})\bo_{(1,1,...,1)} \ (\text{by} \ \text{relation (16)(e)}) \\
\end{split}
\end{align}

Next, we show that $(\Ee_{i,0} \ast \Ff_{i,0} \ast \Ee_{i,0} \ast \Ff_{i,0})\bo_{(1,1,...,1)-\alpha_{i+1}} \cong (\Ee_{i,0} \ast \Ff_{i,0})\bo_{(1,1,...,1)-\alpha_{i+1}}$. First, we observe that $(\Ee_{i,0} \ast \Ff_{i,0})\bo_{(1,1,...,1)-\alpha_{i+1}} \cong (\Ff_{i,0} \ast \Ee_{i,0})\bo_{(1,1,...,1)-\alpha_{i+1}}$. Then by relation (11)(a), we get 
\begin{align*}
(\Ff_{i,0} \ast \Ee_{i,0})\bo_{(1,1,...,1)-\alpha_{i+1}} &\cong (\Ff_{i,0} \ast \Psi_{i}^{-} \ast  \Ee_{i,-1} \ast (\Psi_{i}^{-})^{-1} )\bo_{(1,1,...,1)-\alpha_{i+1}}[-1] \\
& \cong (\Ff_{i,0} \ast \Ee_{i,-1} \ast (\Psi_{i}^{-})^{-1} )\bo_{(1,1,...,1)-\alpha_{i+1}}
\end{align*} where the second isomorphism comes from the fact that $\Psi_{i}^{-}\bo_{(1,1,...,1)-\alpha_{i+1}+\alpha_{i}}$ is given by $\Oo_{\Delta}[1]$. Thus 
\begin{align*}
&(\Ee_{i,0} \ast \Ff_{i,0} \ast \Ee_{i,0} \ast \Ff_{i,0})\bo_{(1,1,...,1)-\alpha_{i+1}}  \\
&\cong (\Ff_{i,0} \ast \Ee_{i,-1} \ast (\Psi_{i}^{-})^{-1} \ast \Ff_{i,0} \ast \Ee_{i,-1} \ast (\Psi_{i}^{-})^{-1} )\bo_{(1,1,...,1)-\alpha_{i+1}} \\
& \cong  (\Ff_{i,0} \ast \Ee_{i,-1}  \ast \Ff_{i,1} \ast (\Psi_{i}^{-})^{-1} \ast \Ee_{i,-1} \ast (\Psi_{i}^{-})^{-1} )\bo_{(1,1,...,1)-\alpha_{i+1}}[1] \ (\text{by} \ \text{relation (12)(a)}) \\
& \cong  (\Ff_{i,0} \ast \Psi_{i}^{-} \ast (\Psi_{i}^{-})^{-1} \ast \Ee_{i,-1} \ast (\Psi_{i}^{-})^{-1} )\bo_{(1,1,...,1)-\alpha_{i+1}}[1][-1] \ (\text{by} \ \text{relation (16)(c)}) \\
& \cong  (\Ff_{i,0} \ast \Ee_{i,-1} \ast (\Psi_{i}^{-})^{-1} )\bo_{(1,1,...,1)-\alpha_{i+1}} \cong (\Ff_{i,0} \ast \Ee_{i,0})\bo_{(1,1,...,1)-\alpha_{i+1}} \cong (\Ee_{i,0} \ast \Ff_{i,0})\bo_{(1,1,...,1)-\alpha_{i+1}}.
\end{align*}

So (\ref{eq cb}) becomes 
\begin{align}
\begin{split} \label{eq cb1}
   (\Ee_{i+1,0} \ast \Ee_{i,0} \ast \Ff_{i,0} \ast \Ff_{i+1,0})\bo_{(1,1,...,1)} & \cong (\Ee_{i+1,0} \ast \Ff_{i,0} \ast \Ee_{i,0} \ast \Ff_{i+1,0})\bo_{(1,1,...,1)} \\
   & \cong (\Ff_{i,0} \ast \Ee_{i+1,0} \ast \Ff_{i+1,0} \ast \Ee_{i,0})\bo_{(1,1,...,1)}.
   \end{split}
\end{align} 

Using the same argument, we can show that $(\Ee_{i+1,0} \ast \Ff_{i+1,0})\bo_{(1,1,...,1)+\alpha_{i}} \cong (\Ee_{i+1,0} \ast \Ff_{i+1,0} \ast \Ee_{i+1,0} \ast \Ff_{i+1,0})\bo_{(1,1,...,1)+\alpha_{i}}$. Hence (\ref{eq cb1}) becomes 
\begin{align*}
&(\Ff_{i,0} \ast \Ee_{i+1,0} \ast \Ff_{i+1,0} \ast \Ee_{i+1,0} \ast \Ff_{i+1,0} \ast \Ee_{i,0})\bo_{(1,1,...,1)} \\
&\cong (\Ff_{i,0} \ast  \Ff_{i+1,0} \ast \Ee_{i+1,0} \ast  \Ff_{i+1,0} \ast \Ee_{i+1,0} \ast \Ee_{i,0})\bo_{(1,1,...,1)} \ (\text{by} \ \text{relation (16)(e)})  \\
&\cong (\Ff_{i,0} \ast  \Ff_{i+1,0}  \ast  \Ff_{i+1,0} \ast \Ee_{i+1,0} \ast \Ee_{i+1,0} \ast \Ee_{i,0})\bo_{(1,1,...,1)} \ (\text{by} \ \text{relation (16)(e)}) \\
& \cong (\Ff_{i+1,0} \ast  \Ff_{i,0}  \ast  \Ff_{i+1,0} \ast \Ee_{i+1,0} \ast \Ee_{i,0} \ast \Ee_{i+1,0})\bo_{(1,1,...,1)} \ (\text{by} \ \text{Lemma} \ \ref{lemma 18}) \\
& \cong (\Ff_{i+1,0} \ast  \Ff_{i,0}  \ast  \Ee_{i+1,0} \ast \Ff_{i+1,0} \ast \Ee_{i,0} \ast \Ee_{i+1,0})\bo_{(1,1,...,1)} \ (\text{by} \ \text{relation (16)(e)})  \\
& \cong (\Ff_{i+1,0} \ast  \Ee_{i+1,0}  \ast  \Ff_{i,0} \ast \Ee_{i,0} \ast \Ff_{i+1,0} \ast \Ee_{i+1,0})\bo_{(1,1,...,1)} \ (\text{by} \ \text{relation (15)})  \\
& \cong \Tt_{i+1}\ast \Tt_{i} \ast \Tt_{i+1} \ (\text{by} \ (\ref{eq cd})) 
\end{align*}

\end{proof}

We give a few remarks to conclude this section.

\begin{remark}
We explain why we prefer (\ref{eq cd4}) over (\ref{eq cd}) (and similarly at the K-theory level) for writing (or interpret) the Demazure operators in terms of elements in the shifted $q=0$ affine algebra. The main reason comes from the naturality of  adjoint functors, i.e. relation (5).

By relation (5), we obtain that 
\begin{align*}
\Hom({\Ee}_{i,0}\bo_{(1,1,...,1)-\alpha_{i}}, {\Ee}_{i,0}\bo_{(1,1,...,1)-\alpha_{i}}) &\cong \Hom(\bo_{(1,1,...,1)}{\Ee}_{i,0} \ast ({\Ee}_{i,0}\bo_{(1,1,...,1)-\alpha_{i}})_{R}, \bo_{(1,1,...,1)})  \\
& \cong \Hom(({\Ee}_{i,0}\ast {\Psi}^{+}_{i}\ast {\Ff}_{i,2} \ast ({\Psi}^{+}_{i})^{-2})\bo_{(1,1,...,1)}[-1], \bo_{(1,1,...,1)}) \\
& \cong \Hom(({\Ee}_{i,0} \ast{\Ff}_{i,1} \ast({\Psi}^{+}_{i})^{-1})\bo_{(1,1,...,1)}, \bo_{(1,1,...,1)}).
\end{align*}

Thus $({\Ee}_{i,0} \ast{\Ff}_{i,1} \ast({\Psi}^{+}_{i})^{-1})\bo_{(1,1,...,1)}$ naturally appears as the kernel for the composition of adjoint functors ${\E}_{i,0} \circ ({\E}_{i,0}\bo_{(1,1,...,1)-\alpha_{i}})^{R}\bo_{(1,1,...,1)}$. Similarly, we also have $({\Ff}_{i,0} \ast{\Ee}_{i,-1} \ast({\Psi}^{-}_{i})^{-1})\bo_{(1,1,...,1)}$ naturally appears as the kernel for the composition of adjoint functors ${\F}_{i,0} \circ ({\F}_{i,0}\bo_{(1,1,...,1)+\alpha_{i}})^{R}\bo_{(1,1,...,1)}$.

Moreover, we expect that the morphisms in the exact triangles (\ref{ses 17}) and (\ref{ses 18}) should be determined come from the identity morphisms $Id:{\Ee}_{i,0}\bo_{(1,1,...,1)-\alpha_{i}} \rightarrow {\Ee}_{i,0}\bo_{(1,1,...,1)-\alpha_{i}}$ and $Id:{\Ff}_{i,0}\bo_{(1,1,...,1)+\alpha_{i}} \rightarrow {\Ff}_{i,0}\bo_{(1,1,...,1)+\alpha_{i}}$, respectively. 
\end{remark}

\begin{remark}
In \cite{Hsu2}, we provide two generalizations of the construction of Demazure operators from the full flag variety $G/B$ to the $n$-step partial flag varieties $Fl_{\kk}(\CC^N)$ by constructing two families of idempotent operators acting on $\Dd^b(Fl_{\kk}(\CC^N))$ and $K(Fl_{\kk}(\CC^N))$. Moreover, these two families of operators generate two variants of the $q=0$ Hecke algebras.

There we give another proof of Theorem \ref{theorem 4} using the main result (i.e. Theorem 5.2) in \textit{loc. cit.}, and the proof is even shorter than the proof (especially for the braid relation) in this article.
\end{remark}

\begin{remark}
We try to relate categorical actions of $q=0$ affine Hecke algebras to the notion of Demazure descent data on a triangulated category that defined in \cite{AK1}, \cite{AK2}. 

From  (\ref{6.2}), (\ref{6.3}) in Theorem \ref{theorem 4}, we know that those FM kernels $\{\Tt_{i}\}_{1 \leq i \leq N-1}$ gives a weak braid monoid action on $\Dd^b(G/B)$. Moreover, (\ref{6.1}) in Theorem \ref{theorem 4} implies that there is a co-projector structure on $\Tt_{i}$ for all $i$. Thus $\{\Tt_{i}\}_{1 \leq i \leq N-1}$ gives a Demazure descent data on $\Dd^b(G/B)$.
\end{remark}

\appendix 

\section{A conjectural presentation} \label{appendix A}

In this appendix, we define another algebra which can be viewed as a second definition of the shifted $q=0$ affine algebra. It has the so-called loop presentation that defined by using generating series. Thus, in this definition we have infinite generators and relations.

Then we construct an action of this algebra on the Grothendieck groups of $n$-step partial flag varieties $\bigoplus_{\kk}K(Fl_{\kk}(\CC^N))$.

Finally, we conjecture that the loop presentation we defined for the shifted $q=0$ affine algebra is equivalent to the presentation that defined in Definition \ref{definition 1}.

\begin{definition} \label{Definition 1}
	Defining the associative $\CC$-algebra $\dot{\Uu}'_{0,N}(L\SL_n)$ that is generated by 
	\begin{equation*}
	\bigcup_{\kk \in C(n,N)} \{1_{\kk}, \ e_{i,r}1_{\kk}, \ f_{i,r}1_{\kk}, \ \psi^{\pm}_{i,\pm s^{\pm}_{i}}1_{\kk}, \ (\psi^{+}_{i,k_{i+1}})^{-1}1_{\kk}, \ (\psi^{-}_{i,-k_{i}})^{-1}1_{\kk} \}_{1 \leq i \leq n-1}^{r \in \ZZ, \ s^{+}_{i} \geq k_{i+1}, \ s^{-}_{i} \geq k_{i}}
	\end{equation*}
	with the following relations (for all $1\leq i,j \leq n-1$, $\kk \in C(n,N)$, $\epsilon,\epsilon' \in \{\pm\}$ and $s^{+}_{i} \geq k_{i+1}$, $s^{-}_{i} \geq k_{i}$).
	\begin{equation}
	1_{\kk}1_{\Ll}=\delta_{\kk,\Ll}1_{\kk}, \ e_{i,r}1_{\kk}=1_{\kk+\alpha_{i}}e_{i,r}, \ f_{i,r}1_{\kk}=1_{\kk-\alpha_{i}}f_{i,r}, \ \psi^{+}_{i, s^{+}_{i}}1_{\kk}=1_{\kk}\psi^{+}_{i, s^{+}_{i}}, \ \psi^{+}_{i, -s^{-}_{i}}1_{\kk}=1_{\kk}\psi^{+}_{i, -s^{-}_{i}}.
	\end{equation}
	
	\begin{equation} 
	[\psi^{\epsilon}_{i,\kk}(z), \psi^{\epsilon'}_{j,\kk}(w)]1_{\kk}=0, \
	(\psi^{+}_{i,k_{i+1}})^{\pm 1} \cdot (\psi^{+}_{i,k_{i+1}})^{\mp 1} 1_{\kk} = 1_{\kk}=(\psi^{-}_{i,-k_{i}})^{\pm 1} \cdot (\psi^{-}_{i,-k_{i}})^{\mp 1}1_{\kk}.
	\end{equation}
	
	\begin{equation} 
	\begin{split}
	&ze_{i,\kk+\alpha_{i}}(z)e_{i,\kk}(w)1_{\kk}=-we_{i,\kk+\alpha_{i}}(w)e_{i,\kk}(z)1_{\kk}, \\
	&we_{i,\kk+\alpha_{i+1}}(z)e_{i+1,\kk}(w)1_{\kk}=(w-z)e_{i+1,\kk+\alpha_{i}}(w)e_{i,\kk}(z)1_{\kk}, \\
	&(z-w)e_{i,\kk+\alpha_{j}}(z)e_{j,\kk}(w)1_{\kk}=(z-w)e_{j,\kk+\alpha_{i}}(w)e_{i,\kk}(z)1_{\kk}, \ \text{if} \ |i-j| \geq 2.
	\end{split}
	\end{equation}
	
	\begin{equation}
	\begin{split}
	&-wf_{i,\kk-\alpha_{i}}(z)f_{i,\kk}(w)1_{\kk}=zf_{i,\kk-\alpha_{i}}(w)f_{i,\kk}(z)1_{\kk}, \\
	&(w-z)f_{i,\kk-\alpha_{i+1}}(z)f_{i+1,\kk}(w)1_{\kk}=wf_{i+1,\kk-\alpha_{i}}(w)f_{i,\kk}(z)1_{\kk}, \\
	&(z-w)f_{i,\kk-\alpha_{j}}(z)f_{j,\kk}(w)1_{\kk}=(z-w)f_{j,\kk-\alpha_{i}}(w)f_{i,\kk}(z)1_{\kk}, \ \text{if} \ |i-j| \geq 2.
	\end{split}
	\end{equation}
	
	\begin{equation}
	\begin{split}
	&z\psi^{+}_{i,\kk+\alpha_{i}}(z)e_{i,\kk}(w)1_{\kk}=-we_{i,\kk}(w)\psi^{+}_{i,\kk}(z)1_{\kk},\\
	&\frac{-w}{z}(\sum_{s \geq 0}(\frac{w}{z})^s)\psi^{+}_{i,\kk+\alpha_{i+1}}(z)e_{i+1,\kk}(w)1_{\kk}=e_{i+1,\kk}(w)\psi^{+}_{i,\kk}(z)1_{\kk},\\
	&\psi^{+}_{i,\kk+\alpha_{i-1}}(z)e_{i-1,\kk}(w)1_{\kk}=(\sum_{s \geq 0}(\frac{w}{z})^s)e_{i-1,\kk}(w)\psi^{+}_{i,\kk}(z)1_{\kk}, \\
	&\psi^{+}_{i,\kk+\alpha_{j}}(z)e_{j,\kk}(w)1_{\kk}=e_{j,\kk}(w)\psi^{+}_{i,\kk}(z)1_{\kk}, \ \text{if} \ |i-j| \geq 2.
	\end{split}
	\end{equation}
	
	\begin{equation}
	\begin{split}
	&z\psi^{-}_{i,\kk+\alpha_{i}}(z)e_{i,\kk}(w)1_{\kk}=-we_{i,\kk}(w)\psi^{-}_{i,\kk}(z)1_{\kk}, \\
	&(\sum_{s \geq 0}(\frac{z}{w})^s)\psi^{-}_{i,\kk+\alpha_{i+1}}(z)e_{i+1,\kk}(w)1_{\kk}=e_{i+1,\kk}(w)\psi^{-}_{i,\kk}(z)1_{\kk}, \\
	&\psi^{-}_{i,\kk+\alpha_{i-1}}(z)e_{i-1,\kk}(w)1_{\kk}=\frac{-z}{w}(\sum_{s \geq 0}(\frac{z}{w})^s)e_{i-1,\kk}(w)\psi^{-}_{i,\kk}(z)1_{\kk},\\
	&\psi^{-}_{i,\kk+\alpha_{j}}(z)e_{j,\kk}(w)1_{\kk}=e_{j,\kk}(w)\psi^{-}_{i,\kk}(z)1_{\kk}, \ \text{if} \ |i-j| \geq 2.
	\end{split}
	\end{equation}
	
	\begin{equation}
	\begin{split}
	&-w\psi^{+}_{i,\kk-\alpha_{i}}(z)f_{i,\kk}(w)1_{\kk}=zf_{i,\kk}(w)\psi^{+}_{i,\kk}(z)1_{\kk}, \\
	&\psi^{+}_{i,\kk-\alpha_{i+1}}(z)f_{i+1,\kk}(w)1_{\kk}=\frac{-w}{z}(\sum_{s \geq 0}(\frac{w}{z})^s)f_{i+1,\kk}(w)\psi^{+}_{i,\kk}(z)1_{\kk}, \\
	&(\sum_{s \geq 0}(\frac{w}{z})^s)\psi^{+}_{i,\kk-\alpha_{i-1}}(z)f_{i-1,\kk}(w)1_{\kk}=f_{i-1,\kk}(w)\psi^{+}_{i,\kk}(z)1_{\kk}, \\
	&\psi^{+}_{i,\kk-\alpha_{j}}(z)f_{j,\kk}(w)1_{\kk}=f_{j,\kk}(w)\psi^{+}_{i,\kk}(z)1_{\kk}, \ \text{if} \ |i-j| \geq 2.
	\end{split}
	\end{equation}
	
	\begin{equation}
	\begin{split}
	&-w\psi^{-}_{i,\kk-\alpha_{i}}(z)f_{i,\kk}(w)1_{\kk}=zf_{i,\kk}(w)\psi^{-}_{i,\kk}(z)1_{\kk}, \\
	&\psi^{-}_{i,\kk-\alpha_{i+1}}(z)f_{i+1,\kk}(w)1_{\kk}=(\sum_{s \geq 0}(\frac{z}{w})^s)f_{i+1,\kk}(w)\psi^{-}_{i,\kk}(z)1_{\kk}, \\
	&\frac{-z}{w}(\sum_{s \geq 0}(\frac{z}{w})^s)\psi^{-}_{i,\kk-\alpha_{i-1}}(z)f_{i-1,\kk}(w)1_{\kk}=f_{i-1,\kk}(w)\psi^{-}_{i,\kk}(z)1_{\kk}, \\
	&\psi^{-}_{i,\kk-\alpha_{j}}(z)f_{j,\kk}(w)1_{\kk}=f_{j,\kk}(w)\psi^{-}_{i,\kk}(z)1_{\kk}, \ \text{if} \ |i-j| \geq 2.
	\end{split}
	\end{equation}
	
	\begin{equation}
	\begin{split}
	e_{i,\kk-\alpha_{i}}(z)f_{i,\kk}(w)1_{\kk}-f_{i,\kk+\alpha_{i}}(w)e_{i,\kk}(z)1_{\kk}=\delta_{ij}\delta(\frac{z}{w})(\psi^{+}_{i,\kk}(z)-\psi^{-}_{i,\kk}(z))1_{\kk}.
	\end{split}
	\end{equation}
	where the generating series are defined as follows
	\[
	e_{i,\kk}(z):=\sum_{r \in \ZZ} e_{i,r}1_{\kk}z^{-r},\ f_{i,\kk}(z):=\sum_{r \in \ZZ} f_{i,r}1_{\kk}z^{-r},
	\]
	\[
	\psi^{+}_{i,\kk}(z):=\sum_{r \geq k_{i+1}} \psi^{+}_{i,r}1_{\kk}z^{-r}, \ \psi^{-}_{i,\kk}(z):=\sum_{r \geq k_{i}} \psi^{-}_{i,-r}1_{\kk}z^{r}, \
	\delta(z):=\sum_{r\in \ZZ} z^r.
	\]
\end{definition}

Although the notion of categorical action of $\dot{\Uu}'_{0,N}(L\SL_n)$ is not easy to define, we can construct an action on the Grothendieck groups of $n$-step partial flag varieties $Fl_{\kk}(\CC^N)$. 

To construct such action, we have to define the action of those generators in $\dot{\Uu}'_{0,N}(L\SL_n)$. First, we define $e_{i,r}1_{\kk}$ and $f_{i,s}1_{\kk}$ for all $r,s \in \ZZ$ via using the K-theoretic FM transforms, i.e.,
\begin{equation*}
e_{i,r}1_{\kk}:K(Fl_{\kk}(\CC^N)) \rightarrow K(Fl_{\kk+\alpha_{i}}(\CC^N)), \ x \mapsto \pi_{2*}(\pi^{*}_{1}(x) \otimes [\iota(\kk)_{*}(\V_{i}/\V'_{i})^{r}])
\end{equation*} for all $r \in \ZZ$, and similarly for $f_{i,s}1_{\kk}$ in the opposite direction with $s \in \ZZ$. Next, for $\psi^{\pm}_{i, \pm s_{i}^{\pm}}1_{\kk}$ we define their action on $K(Fl_{\kk}(\CC^N))$ as the follows
\begin{align}
\begin{split}\label{eq psi1} 
\psi^{+}_{i, s_{i}^{+}}1_{\kk}:&K(Fl_{\kk}(\CC^N)) \rightarrow K(Fl_{\kk}(\CC^N)),  \\
& x \mapsto x \otimes (-1)^{k_{i+1}-1}[\det(\V_{i+1}/\V_{i})][\Sym^{s^{+}_{i}-k_{i+1}}(\V_{i+1}/\V_{i-1})] 
\end{split}
\end{align} where $s^{+}_{i} \geq k_{i+1}$ and similarly 
\begin{align}
\begin{split} \label{eq psi2}
\psi^{-}_{i, -s_{i}^{-}}1_{\kk}:&K(Fl_{\kk}(\CC^N)) \rightarrow K(Fl_{\kk}(\CC^N)),  \\
& x \mapsto x \otimes (-1)^{k_{i}}[\det(\V_{i}/\V_{i-1})^{-1}][\Sym^{s^{-}_{i}-k_{i}}(\V_{i+1}/\V_{i-1})^{\vee}]
\end{split}
\end{align} where $s^{-}_{i} \geq k_{i}$.

Then note that all the results (except Proposition \ref{proposition 5}) we prove above in order to prove Theorem \ref{theorem 1} do not assume any conditions for $r,\ s$ for $e_{i,r}1_{kk}$ and $f_{i,s}1_{\kk}$. Thus passing the works in the main text to the Grothendieck group and using Corollary \ref{corollary cr2}, plus a little extra check, it is easy to verify the following theorem.

\begin{theorem}
There is an action of $\dot{\Uu}'_{0,N}(L\SL_n)$ on $\bigoplus_{\kk} K(Fl_{\kk}(\CC^N))$.
\end{theorem}

Next, we expect that the two presentations in Definition \ref{definition 1} and Definition \ref{Definition 1} are equivalent like the two presentations of shifted quantum affine algebras in \cite{FT} (Theorem 5.5 in loc. cit.).

To relate this definition to Definition \ref{definition 1}, we introduce another set of Cartan generators $\{h_{i,\pm r}\}^{r \geq 1}_{1 \leq i \leq N-1}$  with  
the relations to $\{\psi^{\pm}_{i,\pm s^{\pm}_{i}}1_{\kk}\}$ via the following 
\begin{align*}
&(\psi^{+}_{i,k_{i+1}}z^{-k_{i+1}})^{-1} \psi^{+}_{i,\kk}(z)=(1+h_{i,+}(z))1_{\kk},\\
&(\psi^{-}_{i,-k_{i}}z^{k_{i}})^{-1} \psi^{-}_{i,\kk}(z)=(1+h_{i,-}(z))1_{\kk},
\end{align*}
where $h_{i,\pm}(z)=\sum_{r>0} h_{i,\pm r}z^{\mp r}$.

\begin{remark}
From (\ref{eq psi1}) and (\ref{eq psi2}), it is easy to see that the action of $h_{i,\pm r}$ on the Grothendieck group $K(Fl_{\kk}(\CC^N))$ is given by 
\begin{align*}
&h_{i,r}1_{\kk}:K(Fl_{\kk}(\CC^N)) \rightarrow K(Fl_{\kk}(\CC^N)), \ x \mapsto x \otimes [\Sym^{r}(\V_{i+1}/\V_{i-1})]  \\
&h_{i,-r}1_{\kk}:K(Fl_{\kk}(\CC^N)) \rightarrow K(Fl_{\kk}(\CC^N)), \ x \mapsto x \otimes [\Sym^{r}(\V_{i+1}/\V_{i-1})^{\vee}]
\end{align*}
\end{remark} where $r \geq 1$.

Then we define inductively
\begin{align*}
&e_{i,r}1_{\kk}:= \begin{cases} 
-\psi^{+}_{i}e_{i,r-1}(\psi^{+}_{i})^{-1}1_{\kk}& \text{if} \  r>0 \\
-(\psi^{+}_{i})^{-1}e_{i,r+1}\psi^{+}_{i}1_{\kk}& \text{if} \  r<-k_{i}-1,\\
\end{cases} \\
&f_{i,r}1_{\kk}:= \begin{cases} 
-(\psi^{+}_{i})^{-1}f_{i,r-1}\psi^{+}_{i}1_{\kk}& \text{if} \  r>k_{i+1}+1 \\
-\psi^{+}_{i}f_{i,r+1}(\psi^{+}_{i})^{-1}1_{\kk}& \text{if} \  r<0, \\
\end{cases}\\
&\psi^{+}_{i,r}1_{\kk}:=
[e_{i,r-k_{i+1}-1},f_{i,k_{i+1}+1}]1_{\kk} \  \text{for} \  r \geq k_{i+1}+1, \\
&\psi^{-}_{i,r}1_{\kk}:=
-[e_{i,r},f_{i,0}]1_{\kk} \  \text{for} \  r \leq -k_{i}-1.
\end{align*}

We purpose the following conjecture, which roughly speaking, says that the two presentations defined by Definition \ref{Definition 1} and Definition \ref{definition 1} are equivalent.

\begin{conjecture} \label{conjecture 1}
	There is a $\CC$-algebra isomorphism $\dot{\Uu}_{0,N}(L\SL_n) \rightarrow \dot{\Uu}'_{0,N}(L\SL_n)$ such that 
	\[
	e_{i,r}1_{\kk} \mapsto e_{i,r}1_{\kk}, \ f_{i,r}1_{\kk} \mapsto f_{i,r}1_{\kk}, \ \psi^{+}_{i}1_{\kk} \mapsto \psi^{+}_{i,k_{i+1}}1_{\kk},\ \psi^{-}_{i}1_{\kk} \mapsto \psi^{-}_{i,-k_{i}}1_{\kk},
	\]  for $1 \leq i \leq n-1$.
\end{conjecture}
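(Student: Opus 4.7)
The plan is to mimic Theorem~5.9 of \cite{FT}, which establishes the analogous isomorphism between the two presentations of the shifted quantum affine algebra. The conjectural map $\phi:\dot{\Uu}_{0,N}(L\SL_n)\to \dot{\Uu}'_{0,N}(L\SL_n)$ has been specified on generators, so the work splits into three pieces: verify that $\phi$ is well-defined, construct an inverse $\phi^{-1}$, and check that they are mutually inverse on generators.

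First I would check that $\phi$ respects each of the relations (\ref{U01})--(\ref{U09}) of Definition~\ref{definition 1} by extracting them as specific coefficients of the generating-series relations in Definition~\ref{Definition 1}. For instance, the $j=i$ case of (\ref{U04}) is the coefficient of $z^{-r-1}w^{-s}$ in $z\,e_{i,\kk+\alpha_i}(z)e_{i,\kk}(w) = -w\,e_{i,\kk+\alpha_i}(w)e_{i,\kk}(z)$; the second case of (\ref{U06}) is obtained from the $\psi^+_i$--$e_{i+1}$ relation after expanding the geometric series $\sum_{s\geq 0}(w/z)^s$ and isolating the top degree coefficient $\psi^+_{i,k_{i+1}}$. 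The subtle case is (\ref{U09}), where combining the leading coefficients $\psi^\pm_{i,\pm k_{*}}$ with the next-to-leading coefficients $\psi^\pm_{i,\pm k_{*}\mp 1}$ (extracted from $h_{i,\pm}(z)$) yields exactly the five-fold case split in $r+s$; this mirrors the analysis of (\ref{U7}) in the quantum version.

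Next I would construct $\phi^{-1}$ using the inductive formulas preceding the conjecture. These extend $e_{i,r}1_\kk, f_{i,r}1_\kk$ to all $r\in\ZZ$ via signed conjugation by $\psi^+_i$, and define $\psi^\pm_{i,\pm s}1_\kk$ for large $s$ as commutators $[e_{i,\cdot},f_{i,\cdot}]1_\kk$. The nontrivial content is to show that the resulting elements satisfy the full generating-series relations of Definition~\ref{Definition 1}. I would proceed by induction on $|r|+|s|$: the base cases lie within the ranges of (\ref{U01})--(\ref{U09}), and the inductive step uses (\ref{U06}), (\ref{U07}) to translate a relation at $(r,s)$ into one at $(r\pm 1,s)$ or $(r,s\pm 1)$ up to sign. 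One must also verify the independence of $\psi^\pm_{i,r}$ from the chosen decomposition in its defining commutator, which again reduces to the boundary cases of (\ref{U09}) via the same inductive shift.

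The principal obstacle will be this second step, specifically the derivation of the full Drinfeld-style relations at $q=0$. In the quantum setting \cite{FT} handles this by interpreting both sides as rational functions with prescribed poles, but the $q=0$ specialization of that argument is singular at several places: the factor $q_i-q_i^{-1}$ in (\ref{U7}) must be replaced by the case split of (\ref{U09}), and the coefficients $[c_{ij}]_{q_i}$ in (\ref{U5}), (\ref{U6}) degenerate. I expect the cleanest route is to redo the generating-series derivation directly at $q=0$, using the explicit signs in (\ref{U04})--(\ref{U08}) in place of $q_i^{c_{ij}}$. Once both $\phi$ and $\phi^{-1}$ are constructed and shown to be well-defined, that they are mutually inverse is visible on generators, completing the isomorphism.
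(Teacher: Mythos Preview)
The paper does not prove this statement: it is explicitly labeled a \emph{conjecture} (Conjecture~\ref{conjecture 1}) and appears only in the appendix as a ``conjectural presentation,'' with the author writing just before it that ``we expect that the two presentations in Definition~\ref{definition 1} and Definition~\ref{Definition 1} are equivalent like the two presentations of shifted quantum affine algebras in \cite{FT}.'' There is no proof in the paper to compare your proposal against.

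Your outline is the natural strategy one would try, and it mirrors the structure of Theorem~5.9 in \cite{FT} as the author himself suggests. You have also correctly identified the essential difficulty: the $q\to 0$ limit of the \cite{FT} argument is genuinely singular, since the denominators $q_i-q_i^{-1}$ in (\ref{U7}) and the coefficients $[c_{ij}]_{q_i}$ in (\ref{U5}), (\ref{U6}) collapse, and the case split in (\ref{U09}) is not a straightforward specialization of the rational-function identity used in \cite{FT}. Your inductive shift argument via conjugation by $\psi_i^+$ is plausible for extending $e_{i,r}, f_{i,r}$ to all $r$, but the step where you verify that the commutator definitions of $\psi^\pm_{i,r}$ are independent of the decomposition, and that they satisfy the full generating-series relations of Definition~\ref{Definition 1}, is where the real work lies and where your sketch remains a sketch. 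In the quantum case this is handled by a uniform rational-function argument; at $q=0$ you would need to supply a replacement, and neither you nor the paper does so. That is precisely why the author left it as a conjecture.
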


\bibliographystyle{abberv}
\bibliography{bibfile}

\end{document}